\documentclass[11pt,reqno]{amsart}
\usepackage[utf8]{inputenc}
\usepackage{amsmath,amssymb,amsthm,mathrsfs, color}
\usepackage{a4wide}
\usepackage{graphics,subfigure}
\usepackage{epsfig}

\usepackage[colorlinks=true]{hyperref}
\hypersetup{%
    ,urlcolor=blue
    ,citecolor=red
    ,linkcolor=blue
    }

\newtheorem{theorem}{Theorem}[section]
\newtheorem{prop}[theorem]{Proposition}
\newtheorem{lemma}[theorem]{Lemma}
\newtheorem{corollary}[theorem]{Corollary}

\newtheorem{remark}[theorem]{Remark}

\newcommand{\cW}{\mathcal{W}}
\newcommand{\cS}{\mathcal{S}}

\newcommand{\D}{\mathcal{D}}
\newcommand{\R}{{\mathbb R}}
\newcommand{\N}{{\mathbb N}}
\newcommand{\E}{{\mathbb E}}

\newcommand{\cE}{{\mathcal E}}

\newcommand{\xcn}{{\check X^n}}
\newcommand{\lcn}{{\check L^n}}
\newcommand{\covX}{{\mathcal{C}^X}}
\newcommand{\Var}{{\rm Var}}
\newcommand{\vn}{{\bf v}_n(\alpha)}
\newcommand{\rb}{{\bf r}}
\newcommand{\Del}{\Delta^\circ}

\def\cY{\mathcal Y}

\def\cov{\textup{cov}}

\makeatletter
 \@addtoreset{equation}{section}
 \makeatother
 
\allowdisplaybreaks
\usepackage{appendix}
\usepackage{hyperref}

\title[Weak error for rough and Gaussian volatility models]{Weak error approximation for rough and Gaussian mean-reverting stochastic volatility models}
\date{\today}
\author{Aurélien Alfonsi}
\address{CERMICS, ENPC, Institut Polytechnique de Paris, CNRS, Marne-la-Vallée, France \& MathRisk team-project, Inria Paris, France. }
\email{aurelien.alfonsi@enpc.fr}

\author{Ahmed Kebaier}
\thanks{The authors  benefited from the support of the ``chaire Risques financiers'', Fondation du Risque.}
\address{LaMME, CNRS, UMR 8071, Université \'Evry Paris Saclay, 91037, \'Evry, France.}
\email{ahmed.kebaier@univ-evry.fr}

\subjclass[2010]{60H35 60G22 60L90 91G60 }

\keywords{Stochastic Volterra Equation, Rough volatility model, Weak error rate,  Stein and Stein model, Malliavin calculus}
\begin{document}

%%%%%%%%%%%%%%%%%%%%%%%%%%%%%%%%%%%%%%%%%%%%%%%%%%%%%%%%%%%
%%%%%%%%%%%%%%%%%%%%%%%%%%%%%%%%%%%%%%%%%%%%%%%%%%%%%%%%%%%

\begin{abstract}
For a class of stochastic models with Gaussian and rough mean-reverting volatility that embeds the genuine rough Stein-Stein model, we study the weak approximation rate when using a Euler type scheme with integrated kernels. Our first result is a weak convergence rate for the discretised rough Ornstein-Uhlenbeck process, that is essentially in $\min(3\alpha-1,1)$, where $\frac{t^{\alpha-1}}{\Gamma(\alpha)} $ is the fractional convolution kernel with $\alpha \in (1/2,1)$. Then, our main result is to obtain the same convergence rate for the corresponding stochastic rough volatility model with polynomial test functions. 
\end{abstract}

\maketitle
%%%%%%%%%%%%%%%%%%%%%%%%%%%%%%%%%%%%%%%%%%%%%%%%%%%%%%%%%%%

\section{Introduction}

In recent years, the modelling of financial markets has evolved significantly beyond classical stochastic differential equations (SDEs) to account for more realistic dynamics, including rough volatility and long memory effects. Traditional models, such as the Heston model or the Black-Scholes framework, often fail to capture the persistent roughness observed in empirical volatility time series Gatheral et al. \cite{GatJaiRos}. To address these limitations, stochastic Volterra equations (SVEs) have emerged as a powerful tool, allowing for non-Markovian dynamics and flexible autocorrelation structures.

Among the various approaches within this framework, the Stein-Stein stochastic volatility model~\cite{Stein-Stein,SchZhu} has been revisited in the context of rough volatility, providing a tractable yet realistic description of asset price dynamics. Recent studies by Bayer et al. \cite{BaFrGa},  El Euch and Rosenbaum \cite{EleRos} have demonstrated that rough volatility models, particularly those driven by fractional Brownian motion, outperform traditional models in capturing the observed roughness and persistence in volatility. The extension of the mean-reverting Stein-Stein model to a stochastic Volterra (see e.g. Abi Jaber \cite{AJ} and very recently Abi Jaber et al.~\cite{AJHM,AJGu})  setting offers a promising direction, combining analytical tractability with empirical accuracy.
This paper investigates the weak error of Euler-type approximation schemes for a generalized rough stochastic volatility version of the Stein-Stein model.

More precisely, we focus on the weak error for asset log-price models $L$ with a Gaussian mean-reverting rough volatility process.  Namely, we are interested in the following model:
\begin{align}
\begin{cases}     L_t &= L_0+ \int_0^t b(X_s) ds+ \int_0^t f(X_s)dB_s, \\
    X_t&=x_0+\int_0^t \frac{(t-s)^{\alpha-1}}{\Gamma(\alpha)} (\kappa_1 +\kappa_2 X_s)ds +\sigma \int_0^t \frac{(t-s)^{\alpha-1}}{\Gamma(\alpha)} dW_s, 
\end{cases}\label{SVE_OU}
\end{align}  
where $x_0,  \kappa_1, \kappa_2 ,L_0 \in \R$, $\sigma>0$, $\rho \in [-1,1]$, $\alpha \in (1/2,1]$,  $B_t=(\rho W_t +\sqrt{1-\rho^2} W^\bot_t)$ with $W$ and $W^\bot$ independent real Brownian motions on a given filtered probability space $(\Omega,\mathcal{F},(\mathcal{F}_t)_{t\ge 0},\mathbb{P})$ and $b,f:\R \to \R_+$ are  smooth functions. The case $b=-\frac 12 f^2$ corresponds to a stochastic volatility model. In particular, when $f(x)=x$ this model encompasses the rough Stein-Stein model, see~\cite{AJ}. Here, without loss of generality, we assume a zero interest rate. 
We consider a time horizon~$T>0$ and a regular time grid $t_k=\frac{kT}{n}$ for $n\ge 1$ and $k\in \{0,\dots,n\}$. For $t \in [0,T]$, we denote $\eta(t)=t_k$ if $t_k\le t<t_{k+1}$ and consider the following scheme for $t\in [0,T]$
\begin{align}\label{eq:scheme_price}%\label{euler_KI}
\begin{cases}
 \check L^n_t&=L_0 + \int_0^t b(\xcn_{\eta(s)})ds+\int_0^t f(\xcn_{\eta(s)})dB_s, \\
  \check X^n_t&=x_0+\int_0^t \frac{(t-s)^{\alpha-1}}{\Gamma(\alpha)} (\kappa_1+\kappa_2 \xcn_{\eta(s)})ds+ \sigma \int_0^t \frac{(t-s)^{\alpha-1}}{\Gamma(\alpha)}  dW_s.
\end{cases} 
  \end{align}
The analysis of numerical approximation methods for rough SDEs is known to be a difficult issue, which is a current hot topic of research.  Fukasawa and Ugai~\cite{FuUg} have studied, for more general coefficients, the limit distribution of $n^{\alpha-1/2}(\check X^n_T-X_T)$. Jourdain and Pagès~\cite{JoPa} have obtained the strong error rate for more general kernels and coefficients, which is in $O(n^{1/2-\alpha})$ for the rough kernel with time-homogeneous coefficients. These results give  a straightforward upper bound for the weak error on the volatility process 
$$\E[\Psi(\check X^n_T)]-\E[\Psi(X_T)],$$
for Lipschitz test functions $\Psi:\R \to \R$. In the literature, the weak error has been mostly studied for the asset price component of the model, namely $\E[\Phi(\check L^n_T)]-\E[\Phi(L_T)]$, for $\Phi:\R \to \R$. However, up to our knowledge, these studies bring only for the particular case $\kappa_1=\kappa_2=0$, where there is no approximation error on the volatility process but only on the process $L$. These works also assume that $b=0$.  More precisely, Bayer et al.~\cite{BHT} have obtained a weak error rate of $O(n^{-\alpha})$ for $f(x)=x$ and $\rho=1$. Bayer et al.~\cite{BFN} have further shown by using Malliavin calculus a rate of $O(n^{1-2\alpha})$, but for a larger family of functions $f$ (namely $\mathcal{C}^2$ with a polynomial growth assumption).  A decisive step has then been reached by Gassiat~\cite{Gassiat} who has shown a sharper weak error rate of $O({\bf v}_n(\alpha))$ for the cubic test function, where
\begin{equation}\label{vitesse}
  {\bf v}_n(\alpha)=\begin{cases} \frac 1 n \text{ if } \alpha \in (2/3,1) \\
  \frac{\log(n)}n  \text{ if } \alpha =2/3 \\
\frac 1 {n^{3\alpha-1}} \text{ if } \alpha \in (1/2,2/3).\end{cases}
\end{equation}
More recently, Friz et al.~\cite{FSW} have generalized his analysis to encompass polynomial test functions~$\Phi$, smooth functions $f$ with derivatives having a sub-exponential growth, and $\rho \in [-1,1]$. Besides,  Bonesini et al.~\cite{BJP} obtain the same rate for the same model but for more general test functions~$\Phi$, by using a different analysis based on path-dependent PDE. A nice summary of these different rates of convergence is given in a table of~\cite[Section 1.2]{BJP}, with the Hurst exponent  $H=\alpha-1/2 \in (0,1/2)$.

In contrast with these studies, we consider coefficients $\kappa_1,\kappa_2\in \R$ and $b$ possibly different from $0$, which allows to reach the genuine rough Stein-Stein model~\cite[Eq.~(1.5) and (1.6)]{AJ}. It allows to have a stationary distribution for the volatility~$X$ when $\kappa_2<0$, see Corollary~\ref{cor_stationarity}, which is an interesting feature for modelling purposes. Note that when $\kappa_2\not=0$, the scheme~\eqref{eq:scheme_price} introduces a discretisation error on the volatility that we handle in this paper.   Our first contribution in this paper is to analyse the weak convergence of the rough Volterra Ornstein-Uhlenbeck process that represents the volatility in~\eqref{SVE_OU}. We prove that for any $\mathcal{C}^2$ test functions~$\Psi$ such that $\Psi$, $\Psi'$ and $\Psi''$  have exponential growth, $\E[\Psi(\check X^n_T)]-\E[\Psi(X_T)]=O(\vn)$, see Theorem~\ref{thm_covariance_error}.   This is the main result of Section~\ref{Sec_rOU}, that also presents key tools to analyse the weak approximation error on the process~$L$. Then, Section~\ref{Sec_cubic} presents the study of the weak error for $\E[(\check L^n_T)^3]-\E[(L_T)^3]$ and $b=0$. It is noticed in~\cite[Proposition 6.1]{FSW} that the cubic case allows to get familiar with the structure of the weak error rate, avoiding too much heavy notation, and prepares for tackling the general polynomial case. We prove in Theorem~\ref{thm_cubic} that $\E[(\check L^n_T)^3]-\E[(L_T)^3]=O(\vn)$ when the coefficient $f$ is $\mathcal{C}^3$, with $f^{(i)}$ having exponential growth for $0\le i\le 3$. Section~\ref{Sec_polynomial} is dedicated to our main result (Theorem~\ref{thm_main}): $\E[\Phi(\check L^n_T)]-\E[\Phi(L_T)]=O(\vn)$ for any polynomial test function $\Phi$, when the coefficients $b$ and $f$ are $\mathcal{C}^\infty$ with all derivatives of exponential growth. For this last result, we extend the framework proposed by Friz et al.~\cite{FSW} to our setting. Finally, the two appendix sections are devoted  to useful results and  technical proofs.

\section{Convergence for the rough Volterra Ornstein-Uhlenbeck process}\label{Sec_rOU}

We focus in this section on the main properties of the rough volatility process. The first subsection is devoted to the process $X$ itself and calculates its expectation and covariance functions as well as its Malliavin derivative. We also prove a crucial estimate for the analysis of the weak error, see Theorem~\ref{thm_technique}. The second subsection shows a weak error results on the rough Ornstein-Uhlenbeck process. To prove it, we establish key estimates on the error between the process and its approximation~$\xcn$ for the expectation and the covariance. These estimates will be essential also in the next sections to estimate the weak error in the rough Stein and Stein model.

We recall few classical results concerning the special functions that we will use through the paper. Let $\Gamma(\alpha)=\int_0^\infty t^{\alpha-1} e^{-t}dt$ denote the Euler function. For $\alpha,\beta>0$, we have $\frac{\Gamma(\alpha+\beta)} {\Gamma(\alpha)\Gamma(\beta)}\int_0^1 x^{\alpha-1}(1-x)^{\beta-1}dx=1$ and therefore for any $0\le s< t$,
\begin{equation}\label{Beta_identity}
  \int_s^t \frac{(u-s)^{\alpha-1}}{\Gamma(\alpha)}\frac{(t-u)^{\beta-1}}{\Gamma(\beta)} du=\frac{t^{\alpha+\beta-1}}{{\Gamma(\alpha+\beta)}},
\end{equation}
by using the change of variable $u=s+x(t-s)$. We also  recall the confluent hypergeometric function~\cite[Eq. 15.6.1 and 15.1.2]{Daalhuis} that   is defined by
\begin{equation}\label{def_2F1}
  _2 F_1(a,b,c;z)=\frac{\Gamma(c)}{\Gamma(b)\Gamma(c-b)} \int_0^1 \frac{t^{b-1}(1-t)^{c-b-1}}{(1-zt)^a}dt
\end{equation}
for $z\in[0,1)$, $c>b>0$. For $z=1$, the function can be extended continuously provided that $c-a-b>0$.

\subsection{The rough Ornstein-Uhlenbeck process}

\begin{prop}\label{prop_esp}
  Let $X$ be the solution of the Stochastic Volterra Equation~\eqref{SVE_OU}.
Then, we have  
\begin{equation}\label{esp_VOU}
  \E[X_t]= \sum_{i=0}^\infty \kappa_2^{i} \left(x_0 \frac{t^{\alpha i}}{\Gamma(\alpha i+1)}+\kappa_1 \frac{t^{\alpha(i+1)}}{\Gamma(\alpha(i+1)+1)}\right)=x_0 E_{\alpha}(\kappa_2 t^\alpha)+ \frac{\kappa_1}{\kappa_2} (E_{\alpha}(\kappa_2 t^\alpha)-1),
\end{equation}
where $E_\alpha(z)=\sum_{i=0}^\infty \frac{z^i}{\Gamma(\alpha i +1)}$ is the Mittag-Leffler function.
\end{prop}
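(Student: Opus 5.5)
Taking expectations in the Volterra equation~\eqref{SVE_OU} yields a deterministic linear Volterra equation for $m(t):=\E[X_t]$, which I then solve by Picard iteration using the Beta identity~\eqref{Beta_identity}. First I need that the stochastic integral has zero mean and that Fubini may be applied. Since $\alpha>1/2$, the kernel $s\mapsto (t-s)^{\alpha-1}$ lies in $L^2([0,t])$, so the Wiener integral $\int_0^t \frac{(t-s)^{\alpha-1}}{\Gamma(\alpha)}dW_s$ is a centred Gaussian variable. Existence and uniqueness of $X$ with $\sup_{t\le T}\E[|X_t|]<\infty$ is standard for this linear SVE with locally integrable kernel; I take it as given. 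Fubini then gives
\begin{equation*}
m(t)=x_0+\int_0^t \frac{(t-s)^{\alpha-1}}{\Gamma(\alpha)}(\kappa_1+\kappa_2 m(s))\,ds .
\end{equation*}
The function $m$ is bounded on $[0,T]$, and this equation has a unique bounded solution: if $\delta$ is the difference of two solutions, iterating $|\delta(t)|\le |\kappa_2|\int_0^t \frac{(t-s)^{\alpha-1}}{\Gamma(\alpha)}|\delta(s)|ds$ and using~\eqref{Beta_identity} gives $|\delta(t)|\le \|\delta\|_\infty |\kappa_2|^k t^{\alpha k}/\Gamma(\alpha k+1)\to 0$.

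It therefore suffices to check that the candidate series $\tilde m(t)=\sum_{i\ge0}\kappa_2^i\bigl(x_0 \frac{t^{\alpha i}}{\Gamma(\alpha i+1)}+\kappa_1\frac{t^{\alpha(i+1)}}{\Gamma(\alpha(i+1)+1)}\bigr)$ solves the same equation. The series converges absolutely and uniformly on $[0,T]$, since the growth of $\Gamma(\alpha i+1)$ makes the Mittag-Leffler series entire. This lets us integrate term by term. Applying~\eqref{Beta_identity} with $\beta=\alpha$ and exponent $\alpha j+1$ (at $s=0$), we get
\begin{equation*}
\int_0^t \frac{(t-s)^{\alpha-1}}{\Gamma(\alpha)}\frac{s^{\alpha j}}{\Gamma(\alpha j+1)}ds=\frac{t^{\alpha(j+1)}}{\Gamma(\alpha(j+1)+1)}.
\end{equation*}
The convolution with the kernel thus shifts the index $i\mapsto i+1$. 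Plugging in, $x_0+\int_0^t\frac{(t-s)^{\alpha-1}}{\Gamma(\alpha)}(\kappa_1+\kappa_2\tilde m(s))ds$ equals $x_0+\kappa_1\frac{t^\alpha}{\Gamma(\alpha+1)}+\sum_{i\ge0}\kappa_2^{i+1}\bigl(x_0\frac{t^{\alpha(i+1)}}{\Gamma(\alpha(i+1)+1)}+\kappa_1\frac{t^{\alpha(i+2)}}{\Gamma(\alpha(i+2)+1)}\bigr)$. After reindexing, this is exactly $\tilde m(t)$. Hence $m=\tilde m$, which is the first equality in~\eqref{esp_VOU}.

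The closed form then follows by grouping terms. The $x_0$ part is $x_0E_\alpha(\kappa_2t^\alpha)$ by definition. For $\kappa_2\neq0$, the $\kappa_1$ part equals $\frac{\kappa_1}{\kappa_2}\sum_{i\ge0}\frac{(\kappa_2t^\alpha)^{i+1}}{\Gamma(\alpha(i+1)+1)}=\frac{\kappa_1}{\kappa_2}(E_\alpha(\kappa_2t^\alpha)-1)$. When $\kappa_2=0$, the expression is understood as its limit $\kappa_1 t^\alpha/\Gamma(\alpha+1)$. The only mildly delicate steps are the justification of Fubini and of the zero mean of the Wiener integral, which rest on the square-integrability of the kernel for $\alpha>1/2$, and the uniqueness argument. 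Everything else is the routine Beta-function computation.
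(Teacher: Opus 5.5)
Your proposal is correct and follows essentially the same route as the paper: take expectations to obtain the linear Volterra equation for $\E[X_t]$, then check with the Beta identity~\eqref{Beta_identity} that the convolution shifts the series index, so the candidate series solves it. Your version goes slightly further by proving explicitly that the equation has a unique bounded solution, which the paper simply asserts.
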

\noindent Note that $\lim_{z\to 0} \frac{E_\alpha (z)-1}{z}= \frac{1}{\Gamma(\alpha+1)}$. Therefore, there is no singularity for $\kappa_2=0$, and we have $\E[X_t]=x_0+\frac{\kappa_1 t^\alpha}{\Gamma(\alpha+1)}$.

\begin{proof}
  By Zhang~\cite[Theorem 3.1]{Zhang}, there exists a unique strong solution of the SVE such that $\sup_{t\in[0,T]} \E[X_t^2]<\infty$ for all $T>0$. We can thus take the expectation and get
  \begin{align*}
    \E[X_t]&=x_0+\int_0^t \frac{(t-s)^{\alpha-1}}{\Gamma(\alpha)} (\kappa_1+\kappa_2 \E[X_s])ds\\
    &=x_0+ \kappa_1 \frac{t^\alpha}{\Gamma(\alpha+1)} +\kappa_2 \int_0^t \frac{(t-s)^{\alpha-1}}{\Gamma(\alpha)} \E[X_s] ds,
  \end{align*} 
and therefore $\E[X_t]$ is the unique solution of this fractional linear differential equation. From the Beta distribution identity~\eqref{Beta_identity}, we have $\int_0^t \frac{(t-s)^{\alpha-1}}{\Gamma(\alpha)} \frac{s^{\alpha i}}{\Gamma(\alpha i+1)}  ds = \frac{t^{\alpha(i+1)}}{\Gamma(\alpha(i+1)+1)}$ for $i\ge 0$, and we get 
\begin{align*}
  &\int_0^t \frac{(t-s)^{\alpha-1}}{\Gamma(\alpha)}  \sum_{i=0}^\infty \kappa_2^{i} \left(x_0 \frac{s^{\alpha i}}{\Gamma(\alpha i+1)}+\kappa_1 \frac{s^{\alpha(i+1)}}{\Gamma(\alpha(i+1)+1)}\right) ds\\
  &= \sum_{i=1}^\infty \kappa_2^{i} \left(x_0 \frac{t^{\alpha i}}{\Gamma(\alpha i+1)}+\kappa_1 \frac{t^{\alpha(i+1)}}{\Gamma(\alpha(i+1)+1)}\right), 
\end{align*}
which gives that~\eqref{esp_VOU} is the desired solution.
\end{proof}

\begin{prop}\label{prop_OUexplicit} Let $t \in \R_+$ and $T\ge t$. Let $X$ be the solution of~\eqref{SVE_OU} and $\mathcal{Y}_t=X_t-\E[X_t]$ for $t\ge 0$.
  We have 
  \begin{align}\label{Y_cal}
    \mathcal{Y}_t=\sigma \sum_{i=1}^\infty \kappa_2^{i-1} \int_0^t \frac{(t-s)^{i\alpha -1}}{\Gamma(i \alpha)} dW_s.
  \end{align} In particular, $(X_t,t\ge 0)$ is a Gaussian process with covariance function $\cov(X_t,X_T)=\covX(t,T)$ with
  \begin{align}\label{cov_formula}
    \covX(t,T)= \sigma^2 \sum_{i_1,i_2=1}^\infty \kappa_2^{i_1+i_2-2} \frac{t^{i_1 \alpha} T^{i_2 \alpha-1}}{\Gamma(i_1 \alpha+1)\Gamma(i_2 \alpha)}\ _2F_1(1-i_2 \alpha,1;i_1 \alpha+1,\frac{t}{T}).
  \end{align}
   Besides, $X_t \in \mathbb{D}^{1,2}$ (i.e. $\|X_t\|^2_{1,2}:=\E[X_t^2]+\int_0^t \E[\D_sX_t^2] ds <\infty$ see~\cite[p.~27]{Nualart}) and the Malliavin derivative $
\D_sX_t$ is  given by
  \begin{equation}\label{der_Malliavin_X}
\D_sX_t= \mathbf{1}_{s<t}    \sigma \sum_{i=1}^\infty \kappa_2^{i-1}  \frac{(t-s)^{i\alpha -1}}{\Gamma(i \alpha)}.
  \end{equation}
\end{prop}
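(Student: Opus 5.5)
We plan to proceed as in the proof of Proposition~\ref{prop_esp}, using a resolvent expansion of the Volterra kernel. Subtracting the equation for $\E[X_t]$ obtained there from~\eqref{SVE_OU}, the centred process satisfies the linear stochastic Volterra equation
\begin{equation*}
\mathcal{Y}_t=\kappa_2\int_0^t \frac{(t-s)^{\alpha-1}}{\Gamma(\alpha)}\mathcal{Y}_s ds+\sigma\int_0^t \frac{(t-s)^{\alpha-1}}{\Gamma(\alpha)}dW_s .
\end{equation*}
By Zhang's uniqueness result (already invoked in the proof of Proposition~\ref{prop_esp}), it is enough to check that the right-hand side of~\eqref{Y_cal} is a well-defined, square-integrable, adapted process solving this equation.

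First, we show the series is well defined. The $i$-th term has variance $\sigma^2\kappa_2^{2(i-1)}\frac{t^{2i\alpha-1}}{(2i\alpha-1)\Gamma(i\alpha)^2}$. This is finite because $2i\alpha-1>0$ for $\alpha>1/2$, and it is summable because $\Gamma(i\alpha)$ grows superexponentially. Hence the series converges in $L^2$, uniformly for $t$ in compacts.

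Next, we check that the series solves the equation. Apply stochastic Fubini to $\int_0^t \frac{(t-s)^{\alpha-1}}{\Gamma(\alpha)}\int_0^s \frac{(s-u)^{i\alpha-1}}{\Gamma(i\alpha)}dW_u\,ds$, then use the Beta identity~\eqref{Beta_identity} in the $ds$ variable. This gives $\int_0^t \frac{(t-u)^{(i+1)\alpha-1}}{\Gamma((i+1)\alpha)}dW_u$. Fubini is legitimate since $\int_0^t\int_0^s (t-s)^{\alpha-1}(s-u)^{2i\alpha-2}du\,ds<\infty$. Summing over $i$, with interchange of sum and integral justified by the $L^2$ convergence, the series telescopes into the equation. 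As a consequence, $X_t=\E[X_t]+\mathcal{Y}_t$ is an $L^2$-limit of Wiener integrals of deterministic kernels, so $X$ is a centred Gaussian process shifted by a deterministic function.

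For the covariance, take $t\le T$. The Itô isometry gives
\begin{equation*}
\cov(X_t,X_T)=\sigma^2\sum_{i_1,i_2}\kappa_2^{i_1+i_2-2}\int_0^t \frac{(t-s)^{i_1\alpha-1}(T-s)^{i_2\alpha-1}}{\Gamma(i_1\alpha)\Gamma(i_2\alpha)}ds .
\end{equation*}
With the substitution $s=t(1-v)$, the integral becomes $t^{i_1\alpha}T^{i_2\alpha-1}\int_0^1 v^{i_1\alpha-1}\bigl(1-\tfrac{t}{T}(1-v)\bigr)^{i_2\alpha-1}dv$. The substitution $w=1-v$ and the Euler representation~\eqref{def_2F1} (with $a=1-i_2\alpha$, $b=1$, $c=i_1\alpha+1$, so that $\Gamma(c)/(\Gamma(b)\Gamma(c-b))=i_1\alpha$) identify this with $\frac{t^{i_1\alpha}T^{i_2\alpha-1}}{i_1\alpha}\,{}_2F_1(1-i_2\alpha,1;i_1\alpha+1,\tfrac tT)$. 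Combined with $i_1\alpha\,\Gamma(i_1\alpha)=\Gamma(i_1\alpha+1)$, this gives~\eqref{cov_formula}.

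The edge case $t=T$ needs care, because $(T-s)^{i_2\alpha-1}$ is singular when $i_2\alpha<1$. The continuity extension of ${}_2F_1$ at $z=1$ requires $c-a-b=i_1\alpha+i_2\alpha-1>0$, which holds. Alternatively, one can compute directly via the Beta integral. I expect this bookkeeping of the hypergeometric identification, rather than any conceptual point, to be the main obstacle.

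Finally, for the Malliavin derivative, each partial sum is a Wiener integral $\int_0^t h_N(t,s)dW_s$ with deterministic $h_N$. Such a variable lies in $\mathbb{D}^{1,2}$ with $\D_s=h_N(t,s)\mathbf{1}_{s<t}$. The kernels $h_N$ converge in $L^2([0,t])$ to the series in~\eqref{der_Malliavin_X}, and this series converges in $L^2$ by the same Gamma-function bound as in the first step. By closability of $\D$ (Nualart, Lemma 1.2.3), we conclude $X_t\in\mathbb{D}^{1,2}$ with the stated derivative, the deterministic part $\E[X_t]$ contributing nothing.
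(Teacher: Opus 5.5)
Your proposal is correct. Your covariance computation coincides with the paper's (your substitution amounts to $s=tw$), but for the representation \eqref{Y_cal} and for the Malliavin derivative you take a different route.

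For the representation, the paper starts from the actual solution and iterates the centred equation, using Fubini, stochastic Fubini and \eqref{Beta_identity} at each step. This gives
\[
\mathcal{Y}_T=\kappa_2^n\int_0^T \frac{(T-s)^{n\alpha-1}}{\Gamma(n\alpha)}\mathcal{Y}_s\,ds+\sigma\sum_{i=1}^n\kappa_2^{i-1}\int_0^T\frac{(T-s)^{i\alpha-1}}{\Gamma(i\alpha)}\,dW_s ,
\]
and the remainder is then shown to vanish in $L^2$ using $\sup_{t\le T}\E[X_t^2]<\infty$ and Stirling's formula. You instead take the series as an ansatz, verify that it solves the centred equation (one stochastic Fubini per term, then telescoping), and conclude by Zhang's uniqueness.

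What each buys:
\begin{itemize}
\item The paper's derivation shows directly that any square-integrable solution equals the series. So it only uses the existence part of Zhang's theorem, with bounded second moments.
\item Your argument avoids the remainder estimate but relies on uniqueness. You should therefore add that the series defines a jointly measurable, adapted process with bounded second moments on $[0,T]$, so that it lies in the uniqueness class. This follows, e.g., from the $L^2([0,T])$-continuity in $t$ of its kernel, which holds since $\alpha>1/2$.
\end{itemize}

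One small precision: all terms of the series are driven by the same $W$, so $L^2$ convergence should be deduced from summability of the $L^2$ norms $\sigma|\kappa_2|^{i-1}t^{i\alpha-1/2}/\bigl((2i\alpha-1)^{1/2}\Gamma(i\alpha)\bigr)$, not of the variances. The superexponential growth of $\Gamma(i\alpha)$ gives this as well, so nothing breaks.

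For the Malliavin derivative, the paper applies the derivative of Wiener integrals to \eqref{Y_cal} and then checks $\int_0^t(\D_sX_t)^2ds<\infty$. Your closability argument on the partial sums is more careful, since it explicitly justifies commuting $\D$ with the infinite series.
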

\begin{remark}
  From~\eqref{der_Malliavin_X}, we easily get that for any $T>0$, 
\begin{equation} \label{eq_def_majCT}
  C_T=\sup_{s \in [0,T]} \int_0^s|\D_rX_s|dr< \infty
\end{equation}
and there exists a constant $C'_T$ such that for any $s,t \in [0,T]$ with $s<t$,
\begin{equation*}
  |\D_sX_t|\le C'_T(1+(t-s)^{\alpha-1}).
\end{equation*}

\end{remark}
\begin{remark}
From Propositions~\ref{prop_esp} and~\ref{prop_OUexplicit}, the distribution of the Gaussian vector $(X_{t_k})_{0\le k\le n}$ and even $(W_{t_k},X_{t_k})_{0\le k\le n}$ is known so that it can be sampled exactly, up to the truncation of the series~\eqref{cov_formula}. Then, we can simulate exactly the volatility process on a given time grid. 
In this work, we consider instead an approximation of $X$, which adds new terms in the error analysis and allows to understand the effect of discretising the volatility process. Beyond the theoretical interest, this extension is also useful in practice, for example to calculate the parameter sensitivity by Monte Carlo or for a multi-asset model. Suppose that we  approximate $\frac{d}{d\kappa_2}\E[\Phi(L_T)]\approx \E[\frac{\Phi(L^{\kappa_2+\varepsilon}_T)-\Phi(L^{\kappa_2-\varepsilon}_T)}{2\varepsilon}]$ with $0<\varepsilon \ll 1$: for variance purposes, it is crucial to sample $X$ with the parameters $\kappa_2-\varepsilon$ and $\kappa_2+\varepsilon$ with the same driving Brownian motion. If we use the exact simulation, this gives a nearly singular covariance matrix that may raise numerical issues, while the two approximation schemes can be sampled easily from $(\int_0^{t_k} \frac{(t_k-s)^{\alpha-1}}{\Gamma(\alpha)}dW_s)_{k\le n}$.        
\end{remark}

\begin{proof}
  We have $X_T=x_0+\int_0^T \frac{(T-s)^{\alpha-1}}{\Gamma(\alpha)} (\kappa_1+\kappa_2 X_s)ds + \sigma \int_0^T  \frac{(T-s)^{\alpha-1}}{\Gamma(\alpha)} dW_s$ and thus $\E[X_T]=x_0+\int_0^T \frac{(T-s)^{\alpha-1}}{\Gamma(\alpha)} (\kappa_1+\kappa_2 \E[X_s])ds$. This gives
  $$ \cY_T= \int_0^T \frac{(T-s)^{\alpha-1}}{\Gamma(\alpha)}  \kappa_2 \cY_s ds + \sigma \int_0^T  \frac{(T-s)^{\alpha-1}}{\Gamma(\alpha)}dW_s,  T\ge 0.$$
We then obtain 
$$ \cY_T=\int_0^T \frac{(T-t)^{\alpha-1}}{\Gamma(\alpha)}  \kappa_2 \left(\int_0^t \frac{(t-s)^{\alpha-1}}{\Gamma(\alpha)}  \kappa_2 \cY_s ds + \sigma \int_0^t  \frac{(t-s)^{\alpha-1}}{\Gamma(\alpha)} dW_s\right) dt + \sigma \int_0^T  \frac{(T-t)^{\alpha-1}}{\Gamma(\alpha)}dW_t.$$
By using Fubini's theorem and then~\eqref{Beta_identity}, we get 
$$\int_0^T \frac{(T-t)^{\alpha-1}}{\Gamma(\alpha)}  \int_0^t \frac{(t-s)^{\alpha-1}}{\Gamma(\alpha)}  \cY_s ds dt = \int_0^T \cY_s \frac{(T-s)^{2\alpha-1}}{\Gamma(2\alpha)} ds, $$
Similarly, the stochastic Fubini theorem (see e.g.~\cite[Theorem IV.65]{Protter}) gives
$$\int_0^T \frac{(T-t)^{\alpha-1}}{\Gamma(\alpha)}  \left(\int_0^t \frac{(t-s)^{\alpha-1}}{\Gamma(\alpha)}   dW_s\right) dt = \int_0^T  \frac{(T-s)^{2\alpha-1}}{\Gamma(2\alpha)} dW_s, $$
and we get 
$$\cY_T= \int_0^T \frac{(T-s)^{2\alpha-1}}{\Gamma(2\alpha)}  \kappa_2^2 \cY_s ds+  \sigma \sum_{i=1}^2 \kappa_2^{i-1} \int_0^T \frac{(T-s)^{i\alpha -1}}{\Gamma(i \alpha)} dW_s.$$
By induction, we get using again the same Fubini argument 
$$\cY_T= \int_0^T \frac{(T-s)^{n\alpha-1}}{\Gamma(n\alpha)}  \kappa_2^n \cY_s ds+  \sigma \sum_{i=1}^n \kappa_2^{i-1} \int_0^T \frac{(T-s)^{i\alpha -1}}{\Gamma(i \alpha)} dW_s$$
for $n\ge 1$.   The first term and the rest  $\sum_{i=n+1}^\infty \kappa_2^{i-1} \int_0^t \frac{(T-s)^{i\alpha -1}}{\Gamma(i \alpha)} dW_s$ vanish in $L^2(\Omega)$  as $n\to \infty$. We have indeed 
\begin{align*}
  \E\left[\left( \int_0^T \frac{(T-s)^{n\alpha-1}}{\Gamma(n\alpha)}  \kappa_2^n \cY_s ds\right)^2\right]&\le T \sup_{t\in [0,T]} \E[|X_t|^2] \int_0^T \frac{(T-s)^{2n\alpha-2}}{\Gamma(n\alpha)^2}  \kappa_2^{2n} ds \\
  &=T \sup_{t\in [0,T]} \E[|X_t|^2] \frac{T^{2 n\alpha-1}}{(2n\alpha-1) \Gamma(n\alpha)^2}  \kappa_2^{2n} \to_{n\to \infty} 0,
\end{align*} 
by Stirling's formula. Similarly, we have by the Itô isometry  $$ \E^{1/2}\left[\left(\int_0^T \frac{(T-s)^{i\alpha-1}}{\Gamma(i\alpha)}  \kappa_2^i  dW_s\right)^2\right] = \frac{|\kappa_2|^iT^{i\alpha-1/2}}{(2i\alpha-1)^{1/2}\Gamma(i\alpha)},$$ and we obtain  $\sum_{i=n+1}^\infty \E^{1/2} \left[\left( \kappa_2^{i-1} \int_0^t \frac{(T-s)^{i\alpha -1}}{\Gamma(i \alpha)} dW_s\right)^2\right] \to_{n\to \infty} 0$. By uniqueness of the limit in $L^2(\Omega)$, we get $$\mathcal{Y}_T=\sigma \sum_{i=1}^\infty \kappa_2^{i-1} \int_0^T \frac{(T-s)^{i\alpha -1}}{\Gamma(i \alpha)} dW_s \text{ a.s., for }T>0.$$

We now calculate the covariance by Itô isometry for $t\in [0,T]$:
\begin{align*}
\cov(\cY_t,\cY_T)&= \sigma^2 \int_0^t  \sum_{i_1, i_2=1}^\infty \kappa_2^{i_2+i_2-2} \frac{(t-s)^{i_1\alpha -1}}{\Gamma(i_1 \alpha)}  \frac{(T-s)^{i_2\alpha -1}}{\Gamma(i_2 \alpha)} ds \\
&= \sigma^2 \sum_{i_1, i_2=1}^\infty \frac{\kappa_2^{i_2+i_2-2}}{\Gamma(i_1 \alpha) \Gamma(i_2 \alpha)} t^{i_1\alpha} T^{i_2\alpha-1}\int_0^1 (1-u)^{i_1\alpha-1}\left(1-u \frac{t}{T}\right)^{i_2\alpha -1} du,
\end{align*}
with the change of variable $s=ut$. This gives~\eqref{cov_formula} by~\eqref{def_2F1}. Last, we apply the chain rule to~\eqref{Y_cal} (see~\cite[Proposition 1.3.8]{Nualart}) and we get~\eqref{der_Malliavin_X}. We then observe that $\int_0^t (\D_sX_t)^2 ds=\sigma^2 \sum_{i_1, i_2=1}^\infty \frac{\kappa_2^{i_2+i_2-2}}{\Gamma(i_1 \alpha) \Gamma(i_2 \alpha)} \frac{t^{(i_1+i_2)\alpha-1}}{(i_1+i_2)\alpha-1}$ by the same calculation as above with $T=t$, and therefore $\|X_t\|_{1,2}<\infty$. 
\end{proof}

\begin{corollary}\label{cor_stationarity}
  If $\kappa_2<0$, then $X_t$ converges in distribution when $t\to \infty$ to $\mathcal{N}(-\frac{\kappa_1}{\kappa_2},\Sigma_\infty^2)$, with
  $$\Sigma_\infty^2=\sigma^2 \int_0^\infty s^{2\alpha-2}E_{\alpha,\alpha}^2(\kappa_2 s^\alpha)ds,$$ 
  where $E_{\alpha,\beta}(z)=\sum_{i=0}^\infty \frac{z^i}{\Gamma(\alpha i +\beta)}$ for $z\in \R$ is the generalized Mittag-Leffler function.
\end{corollary}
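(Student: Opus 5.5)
Since $X_t$ is Gaussian by Proposition~\ref{prop_OUexplicit}, it suffices to show that $\E[X_t]\to -\frac{\kappa_1}{\kappa_2}$ and $\Var(X_t)\to \Sigma_\infty^2$ as $t\to\infty$. Convergence in distribution then follows from the pointwise convergence of the characteristic functions $\exp(i u \E[X_t]-\frac{u^2}{2}\Var(X_t))$ and Lévy's continuity theorem.

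\emph{Mean.} By Proposition~\ref{prop_esp}, $\E[X_t]=x_0 E_\alpha(\kappa_2 t^\alpha)+\frac{\kappa_1}{\kappa_2}(E_\alpha(\kappa_2 t^\alpha)-1)$. For $\alpha\in(1/2,1]$ and $z\to-\infty$, the classical asymptotics of the Mittag-Leffler function give $E_\alpha(z)\sim -\frac{1}{z\Gamma(1-\alpha)}\to 0$ when $\alpha<1$, and $E_1(z)=e^{z}\to0$ when $\alpha=1$. Hence $\E[X_t]\to-\kappa_1/\kappa_2$.

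\emph{Variance.} From~\eqref{Y_cal}, summing the series gives the kernel
$$\sigma\sum_{i\ge1}\kappa_2^{i-1}\frac{u^{i\alpha-1}}{\Gamma(i\alpha)}=\sigma u^{\alpha-1}E_{\alpha,\alpha}(\kappa_2u^\alpha).$$
Interchanging the sum and the stochastic integral is justified by the $L^2$ convergence already established in the proof of Proposition~\ref{prop_OUexplicit}. Equivalently, the Malliavin derivative~\eqref{der_Malliavin_X} equals $\sigma (t-s)^{\alpha-1}E_{\alpha,\alpha}(\kappa_2(t-s)^\alpha)$. By the Itô isometry and the change of variable $u=t-s$,
$$\Var(X_t)=\sigma^2\int_0^t u^{2\alpha-2}E_{\alpha,\alpha}^2(\kappa_2u^\alpha)\,du,$$
which is nondecreasing in $t$. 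Monotone convergence then gives $\Var(X_t)\to\Sigma_\infty^2$, provided this limit is finite.

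\emph{Finiteness of $\Sigma_\infty^2$.} This is the main step. Near $u=0$ the integrand behaves like $u^{2\alpha-2}/\Gamma(\alpha)^2$, which is integrable since $\alpha>1/2$. At infinity I will use the standard asymptotic expansion $E_{\alpha,\beta}(z)=-\sum_{k=1}^{N}\frac{z^{-k}}{\Gamma(\beta-\alpha k)}+O(|z|^{-N-1})$ as $z\to-\infty$, valid for $0<\alpha<2$. With $\beta=\alpha$, the $k=1$ term vanishes because $1/\Gamma(0)=0$, so $E_{\alpha,\alpha}(z)=O(|z|^{-2})$. The integrand is therefore $O(u^{2\alpha-2}u^{-4\alpha})=O(u^{-2\alpha-2})$, which is integrable at infinity. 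For $\alpha=1$, $E_{1,1}(z)=e^{z}$ and finiteness is immediate.

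A cruder alternative avoiding the refined expansion is the bound $|E_{\alpha,\alpha}(z)|\le C/(1+|z|)$ for $z\le0$. It gives an integrand $O(u^{-2})$, which is still integrable. So the only external input needed is a standard Mittag-Leffler decay estimate on the negative half-line, cited from the literature (e.g. Podlubny or Gorenflo et al.).
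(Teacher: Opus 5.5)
Your proposal is correct and follows the paper's proof. Gaussianity reduces the claim to convergence of the mean (via $E_\alpha(\kappa_2 t^\alpha)\to 0$) and of the variance $\sigma^2\int_0^t u^{2\alpha-2}E_{\alpha,\alpha}^2(\kappa_2u^\alpha)\,du$. The paper handles integrability at infinity with exactly your ``cruder alternative'' $|E_{\alpha,\alpha}(z)|\le C/(1-z)$ for $z<0$ (Podlubny, Theorem~1.6), so your refined $O(|z|^{-2})$ decay is valid but not needed.
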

\begin{proof}
  Since $X_t$ follows a normal distribution, it is sufficient to get the convergence of the mean and the variance. By~\cite[Theorem 1.6]{Podlubny}, $E_\alpha(z)=E_{\alpha,1}(z) \to_{z\to -\infty} 0$, and we get $\E[X_t]\to_{t\to +\infty} -\frac{\kappa_1}{\kappa_2}$ from~\eqref{esp_VOU}. From~\eqref{Y_cal}, we get $$\Var(X_t)=\sigma^2 \int_0^t \sum_{i_1,i_2=1}^\infty \kappa_2^{i_1+i_2-2}  \frac{s^{(i_1+i_2)\alpha -2}}{\Gamma(i_1 \alpha)\Gamma(i_2 \alpha)} ds=\sigma^2 \int_0^t s^{2\alpha-2}E_{\alpha,\alpha}^2(\kappa_2 s^\alpha)ds. $$
  Note that $E_{\alpha,\alpha}(0)=\frac{1}{\Gamma(\alpha)}$, and the integral is well defined in $0+$ since $2\alpha-1>0$. Using again~\cite[Theorem 1.6]{Podlubny}, we get $|E_{\alpha,\alpha}(z)|\le \frac{C}{1-z}$ for $z<0$, so that $s^{2\alpha-2}E_{\alpha,\alpha}^2(\kappa_2 s^\alpha)=_{s\to +\infty} O(s^{-2})$ is integrable. This gives the desired result.
\end{proof}

We end this section by a crucial estimate on the process~$X$. The proof is quite technical and is postponed in the appendix section. 
\begin{theorem}\label{thm_technique}
    Let $T>0$, $n\in \N^*$ and $t_k=k\frac Tn$ for $k\in \N$.  Let $\eta(s)=t_k$ for $t_k\le s<t_{k+1}$. There exists a constant $C_T\in \R_+$ such that for any $t \in [0,T]$,
    \begin{equation*}\max_{1\le k\le n}\left|  \int_0^{t_k}\frac{(t_k-s)^{\alpha-1}}{\Gamma(\alpha)} \E[ X_t (X_s-X_{\eta(s)})]ds   \right| \le C_T \left( \left(1+ t^{\alpha -1}  \right) {\bf v}_n (\alpha)+ \frac{t^{2\alpha -2}}{n}\right).\end{equation*}
\end{theorem}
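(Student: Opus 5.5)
We split $\E[X_t(X_s-X_{\eta(s)})]=\E[X_t]\bigl(\E[X_s]-\E[X_{\eta(s)}]\bigr)+\covX(t,s)-\covX(t,\eta(s))$. The mean part involves only the deterministic function $m(s)=\E[X_s]$ of Proposition~\ref{prop_esp}. It is of the form $m(s)=\sum_i c_i s^{\alpha i}$, which is smooth on $(0,T]$ with $|m'(s)|\le C s^{\alpha-1}$ and $|m''(s)|\le C s^{\alpha-2}$. On each interval $[t_j,t_{j+1})$ with $j\ge1$, we write $m(s)-m(t_j)=m'(t_j)(s-t_j)+O(t_j^{\alpha-2}n^{-2})$. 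Convolving the second-order remainder with the kernel $(t_k-s)^{\alpha-1}$ gives $O(n^{-1})$ up to logs, and since $\alpha>1/2$ these are absorbed in $\vn$. The first-order term $\sum_j m'(t_j)\int_{t_j}^{t_{j+1}}(t_k-s)^{\alpha-1}(s-t_j)ds$ is compared with $\frac{T}{2n}\int m'(s)(t_k-s)^{\alpha-1}ds=O(1/n)$. The discrepancy comes from the cells adjacent to $t_k$, where the kernel is singular, and contributes $O(n^{-\alpha}\cdot n^{-\alpha}\cdot n^{1-\alpha})$-type terms. The first cell $[0,t_1)$ contributes $O(n^{-\alpha}\cdot n^{-(1-\alpha)}\cdot)$, i.e.\ $O(n^{-1})$ after using $t_k\ge t_1$ when $k=1$.

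For the covariance part we use \eqref{Y_cal}: $\covX(t,s)=\sigma^2\int_0^{t\wedge s}K(t-r)K(s-r)dr$ with $K(u)=\sum_i\kappa_2^{i-1}u^{i\alpha-1}/\Gamma(i\alpha)$, so $K(u)=u^{\alpha-1}/\Gamma(\alpha)+O(u^{2\alpha-1})$. The smoother terms of the series produce the same structure with better exponents. We therefore reduce to the leading fractional kernel, controlling the series by the factor $|\kappa_2|^{i}/\Gamma(i\alpha)$ summability exactly as in the proof of Proposition~\ref{prop_OUexplicit}. We need the quantity
\[
I_k(t)=\int_0^{t_k}(t_k-s)^{\alpha-1}\bigl(g_t(s)-g_t(\eta(s))\bigr)ds,\qquad g_t(s)=\int_0^{t\wedge s}(t-r)^{\alpha-1}(s-r)^{\alpha-1}dr .
\]
Away from the diagonal $s=t$, $g_t$ behaves like $s^{\alpha}$-type functions of the mean part. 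Its derivative in $s$ is $\partial_s g_t(s)=(t-s)^{\alpha-1}s^{\alpha-1}\cdot$(boundary term, for $s<t$) $+(\alpha-1)\int_0^{t\wedge s}(t-r)^{\alpha-1}(s-r)^{\alpha-2}dr$. This derivative has a singularity of order $|t-s|^{2\alpha-2}$ near $s=t$ and $s^{\alpha-1}t^{\alpha-1}$ near $s=0$. These two singularities produce respectively the $t^{\alpha-1}$ and $t^{2\alpha-2}/n$ terms in the bound: $g_t(s)$ near $s=0$ is about $t^{\alpha-1}s^{\alpha}$, and the first cell gives $t^{\alpha-1}\cdot n^{-\alpha}\cdot$ kernel mass. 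We again perform the first-order Taylor/Riemann-sum expansion cell by cell. The mean-zero first-order part $(s-\eta(s))-\frac{T}{2n}$ is paired with $(t_k-s)^{\alpha-1}\partial_sg_t$ and summed by parts to gain an extra factor. Cells where either singularity sits ($s$ near $0$, $s$ near $t$, $s$ near $t_k$) are estimated crudely by $\int_{\text{cell}}(t_k-s)^{\alpha-1}|g_t(s)-g_t(\eta(s))|ds$, using the Hölder bound $|g_t(s)-g_t(u)|\le C(1+t^{\alpha-1})|s-u|^{2\alpha-1}$.

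The main obstacle is the region where $s$, $t$ and $t_k$ are all close, and in particular the case $t\approx t_k$. There the kernel singularity $(t_k-s)^{\alpha-1}$ multiplies $\partial_s g_t\sim|t-s|^{2\alpha-2}$, and the crude bound gives only $n^{-(3\alpha-1)}$. This is exactly the origin of $\vn$: the rate $n^{1-3\alpha}$ for $\alpha<2/3$, and a $\log n$ at $\alpha=2/3$ from summing $\sum_j j^{-1}$ over cells. For this region we would first try a direct dyadic decomposition in $j=|t_k-t_j|n/T$ and $|t-t_j|n/T$, bounding cell $j$ by $n^{-1}\cdot n^{1-\alpha}j^{\alpha-1}\cdot n^{-(2\alpha-1)}\min(1,j^{2\alpha-2})\cdot n^{-1}$-type quantities and summing. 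We need to check uniformly in $k$ and $t$ that the total is $O(\vn)$.
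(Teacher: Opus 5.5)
You have the right ingredients: splitting off the mean, the singularities $v^{\alpha-1}t^{\alpha-1}$ and $|t-v|^{2\alpha-2}$ of $\partial_2\covX(t,v)$, and $\sum_j j^{3\alpha-3}$ as the source of $\vn$. But the proposal stops exactly where the proof has to be done. The claim that the cell contributions add up to $O(\vn)$ uniformly in $k$ and $t$ is left as ``we would first try \dots\ we need to check''. That uniform summation is essentially the whole content of the theorem. The machinery you set up for it is also unnecessary and partly wrong:
\begin{itemize}
\item No cancellation is needed. As you note yourself, the crude estimate near $s\approx t\approx t_k$ gives $n^{1-3\alpha}$, which is the claimed rate, and the same crude estimate gives $\log(n)/n$ and $1/n$ in the other regimes.
\item The summation by parts would involve $\partial_s\bigl[(t_k-s)^{\alpha-1}\partial_s g_t(s)\bigr]$. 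This has non-integrable singularities $(t_k-s)^{\alpha-2}$ and $|t-s|^{2\alpha-3}$, so it only works after excisions you have not quantified.
\item Your per-cell quantity $n^{-1}\cdot n^{1-\alpha}j^{\alpha-1}\cdot n^{1-2\alpha}j^{2\alpha-2}\cdot n^{-1}$ is too small by a factor $n$; the last $n^{-1}$ is the hoped-for cancellation gain. The correct bound for cell $i$ is $n^{1-3\alpha}(k-i+1)^{\alpha-1}(|{\bf i}(t)-i|+1)^{2\alpha-2}$, where $\eta(t)=t_{{\bf i}(t)}$. For $t=t_k$ and the leading kernel, every increment $g_t(s)-g_t(\eta(s))$ is positive, so cancellation cannot recover that factor.
\item That expression also shows a single index $j$ cannot encode both $|t_k-t_i|$ and $|t-t_i|$. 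This mismatch is exactly what makes uniformity in $t\neq t_k$ nontrivial.
\item For $s<t$, your formula for $\partial_s g_t$ is a sum of two divergent terms. Substituting $v=s-r$ gives $\partial_s g_t(s)=t^{\alpha-1}s^{\alpha-1}+(1-\alpha)\int_0^s(t-s+v)^{\alpha-2}v^{\alpha-1}dv$.
\end{itemize}

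What is missing is the direct absolute-value estimate that the paper carries out. Lemma~\ref{lem:covariance} treats the whole series, so you need no reduction to the leading kernel; the $L^2$ summability from Proposition~\ref{prop_OUexplicit} would not control derivatives of the cross terms anyway. The lemma gives $|\covX(t,u)-\covX(t,\eta(u))|\le C\int_{\eta(u)}^{u}\bigl(v^{\alpha-1}t^{\alpha-1}+|t-v|^{2\alpha-2}\bigr)dv$. The first part contributes $Ct^{\alpha-1}\int_0^{t_k}(t_k-u)^{\alpha-1}(u^\alpha-\eta(u)^\alpha)du\le Ct^{\alpha-1}/n$ by \cite[Theorem 2.4]{DFF}. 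The same estimate handles the mean part, since $\E[X_u]$ is a smooth function of $u^\alpha$, so your Taylor expansion is not needed there either. For the second part, extend $\int_{\eta(u)}^u$ to the whole cell and integrate the kernel over the cell, which leaves
\[
\sum_{i=1}^k\bigl((t_k-t_{i-1})^\alpha-(t_k-t_i)^\alpha\bigr)\int_{t_{i-1}}^{t_i}|t-v|^{2\alpha-2}dv .
\]
Split this into cells with $t_i\le t$, cells with $t_{i-1}>t$, and the one cell containing $t$. In the first two groups, monotonicity in $t$ lets you replace $t$ by $\eta(t)$, resp.\ $t_{{\bf i}(t)+1}$. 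Then \eqref{eq_LiZeng} reduces the sum to $n^{1-3\alpha}\sum_i (k-i+1)^{\alpha-1}(|{\bf i}(t)-i|+1)^{2\alpha-2}$. On the left group this is at most $n^{1-3\alpha}\sum_{i=1}^n i^{3\alpha-3}$. On the right group Lemma~\ref{lemma_beta_disc2} gives $n^{1-3\alpha}(k-{\bf i}(t)-1)^{3\alpha-2}$. The cell containing $t$ is $O(n^{-\alpha}n^{1-2\alpha})$ by H\"older continuity. Each piece is $O(\vn)$ uniformly in $k$ and $t$. The paper first substitutes $\xi=u^\alpha$, which weights the sum by $v^{\alpha-1}$ and produces its $t^{\alpha-1}\vn$ and $t^{2\alpha-2}/n$ terms, but the mechanism is the same.
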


\subsection{Weak rate of convergence}

The goal of this subsection is to analyse the weak rate of convergence of the kernel integrated Euler scheme~\eqref{eq:scheme_price} for $\xcn$ only.  We start by a remark that motivates the choice of this scheme rather than the discretised kernel Euler scheme:
\begin{align*}
X^n_t&=x_0+\int_0^t\frac{(t-\eta(s))^{\alpha-1}}{\Gamma(\alpha)}  (\kappa_1+\kappa_2 X^n_{\eta(s)})ds+ \sigma \int_0^t \frac{(t-\eta(s))^{\alpha-1}}{\Gamma(\alpha)}  dW_s, \; t\in[0,T].
\end{align*}
We recall that $\eta(s)=\frac{kT}{n}$ for $\frac{kT}{n}\le s<\frac{(k+1)T}{n}$. Let us consider the simplest case: $\kappa_1=\kappa_2=0$ and $\sigma=1$, and consider $X^n_t=\int_0^t \frac{(t-\eta(s))^{\alpha-1}}{\Gamma(\alpha)}dW_s$, the approximation of $X_t=\int_0^t \frac{(t-s)^{\alpha-1}}{\Gamma(\alpha)}dW_s=\xcn_t$. Both are Gaussian processes, and we have
$$ X_T\sim \mathcal{N} \left(0, \frac{T^{2\alpha-1}}{(2\alpha-1)\Gamma(\alpha)^2} \right), \  X^n_T\sim \mathcal{N}\left(0, \int_0^T \frac{(T-\eta(s))^{2\alpha-2}}{\Gamma(\alpha)^2} ds\right).$$
Thus, we have the following equation between characteristic functions $\Phi_{X^n_T}(u)=\E[e^{iu X^n_T}]$ and $\Phi_{X_T}(u)=\E[e^{iu X_T}]$:
$$u \in \R,\  \Phi_{X^n_T}(u)=\Phi_{X_T}(u)\exp \left(\frac{u^2}2 \left(\frac{T^{2\alpha-1}}{(2\alpha-1)\Gamma(\alpha)^2}  - \int_0^t \frac{(T-\eta(s))^{2\alpha-2}}{\Gamma(\alpha)^2} ds \right) \right).$$
By McGown and Parks~\cite{McGown}, we have
\begin{align*}
  \frac{T^{2\alpha-1}}{(2\alpha-1)\Gamma(\alpha)^2}  - \int_0^T \frac{(t-\eta(s))^{2\alpha-2}}{\Gamma(\alpha)^2} ds&=\frac{1}{\Gamma(\alpha)^2}\left(\frac{T^2}{2\alpha-1}-\frac T n \sum_{i=1}^n \sum_{i=1}^n \left(\frac{iT}n \right)^{2\alpha-2}\right) \\
  & \sim_{n\to \infty} \frac{1}{\Gamma(\alpha)^2} \zeta(2(1-\alpha))\frac{T^{2\alpha-1}}{n^{2\alpha-1}},
\end{align*}
which gives a weak rate of convergence proportional to $n^{-(2\alpha-1)}$ for the family of test functions $x\mapsto e^{iux}$. We cannot therefore hope to reach the rate of $O({\bf v}_n(\alpha))$ defined by~\eqref{vitesse}.

In what follows, we introduce the functional spaces that will be used through the paper (in particular as test functions), for $\ell \in \N$: 
 \begin{align}
  &\mathcal{C}_{\exp}^\ell(\R, \R):=\{g \in \mathcal{C}^\ell (\R, \R): \forall k \in \{0,\dots,\ell\}, \exists C\in \R_+ \forall x \in \R,\ |g^{(k)}(x)|\le Ce^{C|x|} \}, \label{def_Cell} \\
  &\mathcal{C}_{\exp}^\infty(\R, \R):=\{g \in \mathcal{C}^\infty(\R, \R): \forall k \in \N, \exists C\in \R_+ \forall x \in \R,\ |g^{(k)}(x)|\le Ce^{C|x|} \}. \label{def_Cexpinfty} 
\end{align} 
It is easy to check that $\mathcal{C}_{\exp}^\infty(\R, \R)$ is a vector space such that for $g\in \mathcal{C}_{\exp}^\infty(\R, \R)$, we have  $g' \in \mathcal{C}_{\exp}^\infty(\R, \R)$. Similarly, for $g_1,g_2\in \mathcal{C}_{\exp}^\infty(\R, \R)$, we have  $g_1 g_2\in \mathcal{C}_{\exp}^\infty(\R, \R)$ by Leibniz rule.

The main result of this section is the following theorem.
\begin{theorem}\label{thm_covariance_error}
  There exists a constant $C_T$ that depends on $T$ and $\alpha \in (1/2,1]$ such that for $0< k_1, k_2\le n$,
  \begin{equation}\label{weak_error_esp}
  |\E[X_{t_{k_1}}]-\E[\xcn_{t_{k_1}}]|\le \frac{C_T}{n},
  \end{equation} 
  $$ |\E[X_{t_{k_1}}X_{t_{k_2}}]-\E[\check X^n_{t_{k_1}}\check X^n_{t_{k_2}}]| \le C_T  \left( (1+t_{k_1}^{\alpha-1}+t_{k_2}^{\alpha-1}) {\bf v}_n(\alpha) + \frac{t_{k_1}^{2\alpha-2}+t_{k_2}^{2\alpha-2}}{n}\right) $$
  and 
  \begin{equation}\label{cov_weak} |Cov(X_{t_{k_1}},X_{t_{k_2}})-Cov(\check X^n_{t_{k_1}},\check X^n_{t_{k_2}})| \le C_T  \left( (1+t_{k_1}^{\alpha-1}+t_{k_2}^{\alpha-1}) {\bf v}_n(\alpha) + \frac{t_{k_1}^{2\alpha-2}+t_{k_2}^{2\alpha-2}}{n}\right). \end{equation}
  Besides,  for any $\Psi \in \mathcal{C}_{\exp }^2(\R, \R)$, there exists a constant $C\in \R_ +$ such that 
  $$\forall n\ge 1, 1\le k\le n, \  \ |\E[\Psi(\xcn_{t_k})]-\E[\Psi(X_{t_k})]|\le C_T\left( (1+t_k^{\alpha-1}){\bf v}_n(\alpha)+\frac{t_k^{2\alpha-2}}{n}\right).$$
  In particular, for $k=n$ we get the weak error $ |\E[\Psi(\xcn_{T})]-\E[\Psi(X_{T})]| =O({\bf v}_n(\alpha))$.
\end{theorem}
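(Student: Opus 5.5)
Since both $X$ and $\xcn$ are Gaussian processes driven by the same $W$ (the scheme is linear in $\xcn_{\eta(s)}$), the plan is to compare first and second moments and then transfer to $\Psi$ by Gaussian interpolation. For the mean, taking expectations in~\eqref{eq:scheme_price} gives $m^n_t:=\E[\xcn_t]=x_0+\int_0^t \frac{(t-s)^{\alpha-1}}{\Gamma(\alpha)}(\kappa_1+\kappa_2 m^n_{\eta(s)})ds$. Setting $e_t=\E[X_t]-m^n_t$, we get
\[ e_t=\kappa_2\int_0^t \frac{(t-s)^{\alpha-1}}{\Gamma(\alpha)}\big(\E[X_s]-\E[X_{\eta(s)}]\big)ds+\kappa_2\int_0^t \frac{(t-s)^{\alpha-1}}{\Gamma(\alpha)} e_{\eta(s)}ds. \]
From Proposition~\ref{prop_esp}, $s\mapsto \E[X_s]$ has derivative bounded by $C(1+s^{\alpha-1})$, which is integrable. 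Hence the first term is $O(1/n)$: the increment $|\E[X_s]-\E[X_{\eta(s)}]|$ is bounded by $\int_{\eta(s)}^s C(1+u^{\alpha-1})du$, and integrating this against the kernel gives $O(1/n)$ uniformly. A discrete fractional Gronwall lemma, applied at grid times and using $\sup_k\sum_j\int_{t_j}^{t_{j+1}}(t_k-s)^{\alpha-1}ds<\infty$, then yields~\eqref{weak_error_esp}.

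For the covariance, write $\check\cY_t=\xcn_t-m^n_t$, so that $\check\cY_t=\kappa_2\int_0^t K(t-s)\check\cY_{\eta(s)}ds+\sigma\int_0^tK(t-s)dW_s$, where $K(u)=u^{\alpha-1}/\Gamma(\alpha)$, and $\cY$ satisfies the same equation with $\cY_s$ in place of $\check\cY_{\eta(s)}$. Let $D_t=\cY_t-\check\cY_t$. Then
\[ D_{t}=\kappa_2\int_0^{t}K(t-s)(\cY_s-\cY_{\eta(s)})ds+\kappa_2\int_0^{t}K(t-s)D_{\eta(s)}ds. \]
I will study the mixed quantities $a_{k,j}:=\E[\cY_{t_j}D_{t_k}]$ and $b_{k,j}:=\E[\check\cY_{t_j}D_{t_k}]$, since $\cov(X_{t_{k_1}},X_{t_{k_2}})-\cov(\xcn_{t_{k_1}},\xcn_{t_{k_2}})=\E[\cY_{t_{k_1}}D_{t_{k_2}}]+\E[D_{t_{k_1}}\check\cY_{t_{k_2}}]$. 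Multiplying the equation for $D_{t_k}$ by $\cY_{t_j}$ and taking expectations, the source term is exactly the quantity bounded in Theorem~\ref{thm_technique} with $t=t_j$, which contributes $C((1+t_j^{\alpha-1}){\bf v}_n(\alpha)+t_j^{2\alpha-2}/n)$. The recursive term is $\kappa_2\int_0^{t_k}K(t_k-s)a_{\eta(s),j}ds$, so a Volterra-type discrete Gronwall in $k$ with $j$ frozen bounds $a_{k,j}$ by the same quantity. For $b_{k,j}$, I write $\check\cY_{t_j}=\cY_{t_j}-D_{t_j}$, so I also need $\E[D_{t_j}D_{t_k}]$. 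This is handled by inserting the equation for $D_{t_j}$, which reduces it to a source term $\int K(t_j-s)\E[(\cY_s-\cY_{\eta(s)})D_{t_k}]ds$ plus a Gronwall term. The source term can be bounded crudely by Cauchy--Schwarz, using the strong rate $\E[D^2]^{1/2}=O(n^{-\alpha}\cdot\,)$-type bounds (even $O(n^{1/2-\alpha})\cdot O(n^{1/2-\alpha})=O(n^{1-2\alpha})$ is not enough). A better route is to re-expand it via Theorem~\ref{thm_technique}, after writing $D_{t_k}$ as a deterministic kernel integral of $\cY_s-\cY_{\eta(s)}$ through the resolvent of the $\kappa_2$-recursion. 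Adding the mean estimate then gives the second-moment bound.

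\textbf{Main obstacle.} I expect the main obstacle to be exactly this $D\cdot D$ term and keeping the singular weights $t^{\alpha-1}$, $t^{2\alpha-2}$ under control through the Gronwall iteration. Convolving them with $K$ only improves them, since $\int_0^tK(t-s)s^{2\alpha-2}ds\lesssim t^{3\alpha-2}$ is integrable, so the weights should stay stable.

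Finally, for $\Psi\in\mathcal{C}^2_{\exp}$, let $m,v$ (resp. $\check m,\check v$) be the mean and variance of $X_{t_k}$ (resp. $\xcn_{t_k}$), and set $g(\mu,\nu)=\E[\Psi(\mu+\sqrt{\nu}G)]$ with $G\sim\mathcal N(0,1)$. Then $\partial_\mu g=\E[\Psi'(\mu+\sqrt\nu G)]$, and by Gaussian integration by parts $\partial_\nu g=\frac12\E[\Psi''(\mu+\sqrt\nu G)]$. Both are bounded on bounded sets of $(\mu,\nu)$ thanks to the exponential growth assumption, and the means and variances are uniformly bounded in $n$. The mean-value theorem then gives $|\E\Psi(\xcn_{t_k})-\E\Psi(X_{t_k})|\le C(|m-\check m|+|v-\check v|)$. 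Combining~\eqref{weak_error_esp} with~\eqref{cov_weak} for $k_1=k_2=k$ yields the claim, and $k=n$ gives the $O({\bf v}_n(\alpha))$ rate.
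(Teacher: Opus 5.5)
Your overall architecture matches the paper's. The mean error is handled by the discrete fractional Gronwall lemma (Lemma~\ref{lem_LiZeng}). The covariance error is reduced to quantities $\E[\mathcal V\,(X-\xcn)_{t_k}]$ whose source terms are controlled by Theorem~\ref{thm_technique}. Finally, the result is transferred to $\Psi$ by Gaussian interpolation; your $g(\mu,\nu)$ argument is equivalent to Lemma~\ref{lem_weak_ito}, and centering first is a harmless variant.

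The gap is the step you flag yourself and leave open: the bound on $\E[D_{t_j}D_{t_k}]$. You are right to discard Cauchy--Schwarz, but the ``resolvent re-expansion'' is only gestured at. As written, the estimate on $b_{k,j}$ is not established, so neither are \eqref{cov_weak} and the second-moment bound.

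The missing observation is that no cancellation between $\cY_s$ and $\cY_{\eta(s)}$ is needed. In the source term $\int_0^{t_j}K(t_j-s)\,\E[(\cY_s-\cY_{\eta(s)})D_{t_k}]\,ds$, bound $\E[\cY_sD_{t_k}]$ and $\E[\cY_{\eta(s)}D_{t_k}]$ separately, using your $a$-estimate at the arbitrary, non-grid time $s$ (resp.\ $\eta(s)$).

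\begin{itemize}
\item Lemma~\ref{lem_LiZeng} with $\mathcal V=\cY_s$, together with Theorem~\ref{thm_technique} at $t=s$ (stated for every $t\in[0,T]$), gives $|\E[\cY_sD_{t_k}]|\le C((1+s^{\alpha-1})\vn+s^{2\alpha-2}/n)$ uniformly in $k$.
\item The same bound holds at $\eta(s)$ for $s\ge t_1$, since $\eta(s)\ge s/2$; for $s<t_1$ the term vanishes because $\cY_0=0$.
\item Integrating against $K(t_j-s)$ gives $C(\vn+t_j^{3\alpha-2}/n)$. For $\alpha<2/3$ one has $t_j^{3\alpha-2}/n\le t_1^{3\alpha-2}/n=T^{3\alpha-2}n^{1-3\alpha}$.
\end{itemize}

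So the source is $O(\vn)$ with no singular weight. Gronwall in $j$ then gives $|\E[D_{t_j}D_{t_k}]|\le C\vn$, which closes your argument. This is exactly how the paper bounds $\E[(\xcn_{t_{k_1}}-X_{t_{k_1}})X_s]$ and $\E[(\xcn_{t_{k_1}}-X_{t_{k_1}})X_{\eta(s)}]$, with the roles of the two indices interchanged. Your resolvent route, made precise by applying Theorem~\ref{thm_technique} at $t=u$ and $t=\eta(u)$ inside the resolvent integral, amounts to the same estimate, with Lemma~\ref{lem_LiZeng} replacing explicit resolvent bounds.

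A minor point: the fractional Gronwall lemma does not follow from $\sup_k\sum_j c_{j,k}<\infty$ alone. What is actually used is the bound $c_{j,k}\le C(T/n)^\alpha (k-j)^{\alpha-1}$, see \eqref{eq_LiZeng} and \cite[Lemma 3.4]{LiZe}.
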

\begin{remark}%\label{Rk_k1=0}
Note that if $k_1=0$, we have by~\eqref{weak_error_esp} $$|\E[X_{t_{k_1}}X_{t_{k_2}}]-\E[\check X^n_{t_{k_1}}\check X^n_{t_{k_2}}]|= |X_0| |\E[X_{t_{k_2}}-\check X^n_{t_{k_2}} ]| \le \frac{C_T}{n},$$
   and also obviously $Cov(X_{t_{k_1}},X_{t_{k_2}})=Cov(\check X^n_{t_{k_1}},\check X^n_{t_{k_2}})=0$.
\end{remark}
\begin{remark}\label{rk_unifbound}
  From Proposition~\ref{prop_esp} and~\eqref{weak_error_esp}, we get that $\max_{0\le k\le n } |\E[\xcn_{t_k}]|\le C_T$ for a constant that does not depend on~$n$.
  Besides, using~\eqref{cov_formula} and~\eqref{cov_weak}, we also obtain $\max_{0\le k_1,k_2 \le n } |Cov(\check X^n_{t_{k_1}},\check X^n_{t_{k_2}})|\le C_T$ for a constant that does not depend on~$n$ since $t_1^{\alpha-1}{\bf v}_n(\alpha)$ and $\frac{t_1^{2\alpha-2}}{n}$ go to~$0$ as $n\to \infty$ for any $\alpha \in (1/2,1]$.  
\end{remark}

Before proving the theorem, we introduce some notation and an important preliminary result of Gronwall type. 
For this aim, let us note  $\Delta_{t_k}=X_{t_k}-\check X^n_{t_k}$ for $k\in \{ 0,\dots,n\}$, that satisfies  
\begin{align}
  \Delta_{t_k}&=\int_0^{t_k} \frac{(t_k-s)^{\alpha-1}}{\Gamma(\alpha)}\kappa_2 (X_s-X_{\eta(s)}+X_{\eta(s)}-\xcn_{\eta(s)})ds \notag\\&=\int_0^{t_k} \frac{(t_k-s)^{\alpha-1}}{\Gamma(\alpha)}\kappa_2 (X_s-X_{\eta(s)})ds +\sum_{i=2}^k  \Delta_{t_{i-1}} \int_{t_{i-1}}^{t_i} \frac{(t_k-s)^{\alpha-1}}{\Gamma(\alpha)}ds.\label{delta_tk}
\end{align} 

\begin{lemma}\label{lem_LiZeng} Let $\mathcal{V}$ be a square integrable random variable.  Then, there exists $C\in \R_+$ that does not depend on $\mathcal{V}$ and $n$, such that
  $$\max_{1\le k\le n} \left|  \E[\mathcal{V} \Delta_{t_k}] \right| \le C  \max_{1\le k\le n}\left|  \int_0^{t_k}\frac{(t_k-s)^{\alpha-1}}{\Gamma(\alpha)} \E[ \mathcal{V} (X_s-X_{\eta(s)})]ds   \right|$$
\end{lemma}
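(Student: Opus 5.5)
Multiplying the recursion~\eqref{delta_tk} by $\mathcal{V}$ and taking expectations turns it into a deterministic discrete Volterra inequality for $a_k:=\E[\mathcal{V}\Delta_{t_k}]$; a discrete fractional Gronwall lemma then closes the argument. Since $X$ and $\xcn$ are square integrable uniformly on $[0,T]$, all expectations are finite. Set
$$b_k:=\kappa_2\int_0^{t_k}\frac{(t_k-s)^{\alpha-1}}{\Gamma(\alpha)}\E[\mathcal{V}(X_s-X_{\eta(s)})]ds,\qquad B:=\max_{1\le k\le n}|b_k|.$$
Linearity of expectation in~\eqref{delta_tk} gives exactly
$$a_k=b_k+\kappa_2\sum_{i=2}^k a_{i-1}\int_{t_{i-1}}^{t_i}\frac{(t_k-s)^{\alpha-1}}{\Gamma(\alpha)}ds .$$
Here $a_0=0$ because $\Delta_0=0$. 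The $i=1$ term is therefore absent, and the sum really runs over $a_1,\dots,a_{k-1}$. (The factor $\kappa_2$ seems to be missing in the second line of~\eqref{delta_tk}; it only affects constants.)

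Next we bound the weights. For $1\le j\le k-1$,
$$w_{k,j}:=\int_{t_j}^{t_{j+1}}\frac{(t_k-s)^{\alpha-1}}{\Gamma(\alpha)}ds\le \frac{1}{\Gamma(\alpha+1)}\Big(\frac Tn\Big)^{\alpha}\big((k-j)^\alpha-(k-j-1)^\alpha\big)\le C\,h^{\alpha}(k-j)^{\alpha-1},$$
with $h=T/n$ and $C$ depending only on $\alpha,T$. This yields
$$|a_k|\le |\kappa_2| B+C|\kappa_2| h^\alpha\sum_{j=1}^{k-1}(k-j)^{\alpha-1}|a_j|,\qquad 1\le k\le n.$$

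The main step, and the only real obstacle, is a discrete fractional Gronwall lemma: the bound $\max_k|a_k|\le C' B$ must hold with $C'$ independent of $n$. A naive iteration does not suffice, because $h^\alpha\sum_{j<k}(k-j)^{\alpha-1}\approx T^\alpha/\alpha$ is not small. I would follow the standard route (cf. Li--Zeng type discrete Gronwall inequalities, or Dixon--McKee) and iterate the inequality $m$ times. After $m$ iterations the kernel becomes, via the discrete analogue of the Beta identity~\eqref{Beta_identity}, a sum bounded by $C^m\Gamma(\alpha)^m h^{m\alpha}(k-j)^{m\alpha-1}/\Gamma(m\alpha)$. This follows from $\sum_{l=j+1}^{k-1}(k-l)^{a-1}(l-j)^{b-1}\le B(a,b)(k-j)^{a+b-1}$, which comes from comparing the sum with the integral; both exponents are at most $1$, so the terms are monotone on the relevant pieces. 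We get
$$|a_k|\le |\kappa_2|B\sum_{i=0}^{m-1}\frac{(C\Gamma(\alpha)T^\alpha)^i}{\Gamma(i\alpha+1)}+R_m ,$$
where the remainder $R_m\le \frac{(C\Gamma(\alpha)T^{\alpha})^m}{\Gamma(m\alpha)}\max_j|a_j|$ tends to $0$ as $m\to\infty$ for fixed $n$, by Stirling's formula (the $a_j$ being finite). Letting $m\to\infty$ gives $\max_k|a_k|\le |\kappa_2|E_\alpha(C\Gamma(\alpha)T^\alpha)\,B$, a Mittag-Leffler constant independent of $n$ and $\mathcal{V}$.

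Alternatively, one can avoid iteration by induction on $k$ with comparison function $u_k=B\,E_\alpha(\lambda t_k^\alpha)$ for a suitable $\lambda$. This works because $\sum_{j<k} w_{k,j}E_\alpha(\lambda t_j^\alpha)\le \int_0^{t_k}\frac{(t_k-s)^{\alpha-1}}{\Gamma(\alpha)}E_\alpha(\lambda s^\alpha)ds=(E_\alpha(\lambda t_k^\alpha)-1)/\lambda$, using that $E_\alpha(\lambda s^\alpha)$ is nondecreasing. I would present this comparison version, since it is shorter: take $\lambda=|\kappa_2|$, whose kernel bound is exact by the first equality defining $w_{k,j}$. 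Then $|a_k|\le|\kappa_2|B+|\kappa_2|\sum_{j<k}w_{k,j}|a_j|$, and induction gives $|a_k|\le B\,E_\alpha(|\kappa_2|t_k^\alpha)\max(1,|\kappa_2|)$. The lemma follows with $C=\max(1,|\kappa_2|)E_\alpha(|\kappa_2|T^\alpha)$.
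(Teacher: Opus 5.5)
Your proof is correct. The reduction is the same as the paper's: multiply \eqref{delta_tk} by $\mathcal{V}$, take expectations, and get a deterministic discrete Volterra inequality for $a_k=\E[\mathcal{V}\Delta_{t_k}]$. You are also right that a factor $\kappa_2$ is missing in front of the sum in \eqref{delta_tk}; the paper's proof of the lemma does include it.

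The two arguments differ in how that inequality is closed.
\begin{itemize}
\item The paper bounds the weights by $C_\alpha (T/n)^\alpha (k-j)^{\alpha-1}$ via \eqref{eq_LiZeng}. It then cites the discrete fractional Gronwall lemma \cite[Lemma 3.4]{LiZe} as a black box.
\item You keep the weights in exact form, $w_{k,j}=\int_{t_j}^{t_{j+1}}\frac{(t_k-s)^{\alpha-1}}{\Gamma(\alpha)}ds$, and induct against the comparison sequence $E_\alpha(|\kappa_2|t_k^\alpha)$. Three facts make this work: $s\mapsto E_\alpha(\lambda s^\alpha)$ is nondecreasing; each $a_j$ sits at the left endpoint of its cell $[t_j,t_{j+1}]$, so the discrete sum is dominated by the continuous fractional integral; and \eqref{Beta_identity} evaluates that integral exactly as $(E_\alpha(\lambda t_k^\alpha)-1)/\lambda$. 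The induction therefore closes with no loss.
\end{itemize}

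What each route buys:
\begin{itemize}
\item Your version is self-contained and avoids \eqref{eq_LiZeng}. It gives the explicit constant $|\kappa_2|E_\alpha(|\kappa_2|T^\alpha)$, independent of $n$ and $\mathcal{V}$, which is exactly the continuous-time Gronwall constant.
\item The paper's route is shorter on the page and more robust. The Li--Zeng lemma needs only the power-law bound on the weights, not their exact cell-integral structure or the monotone comparison. That is why the paper can reuse it for the other discrete recursions, for instance in the proofs of Lemmas~\ref{lem_err_Malliavin3} and~\ref{lem_err_Malliavin_int}. Those recursions have the same weights $c_{i,k}$, so your argument would work there as well.
\end{itemize}

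One cosmetic slip. You define $b_k$ with the factor $\kappa_2$ already included, then write $|a_k|\le|\kappa_2|B+\dots$. With your definition of $B$, the correct inequality is $|a_k|\le B+|\kappa_2|\sum_{j<k}w_{k,j}|a_j|$, where $B=|\kappa_2|\max_k\bigl|\int_0^{t_k}\frac{(t_k-s)^{\alpha-1}}{\Gamma(\alpha)}\E[\mathcal{V}(X_s-X_{\eta(s)})]ds\bigr|$. Your factor $\max(1,|\kappa_2|)$ in the final constant harmlessly covers either reading. Also, the first inequality you write for $w_{k,j}$ is in fact an equality, which is what your comparison argument uses.
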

\begin{proof}
We have from~\eqref{delta_tk}, for $k\in \{1,\dots,n\}$:
$$\mathcal{V} \Delta_{t_k} = \mathcal{V}  \kappa_2\int_0^{t_k}\frac{(t_k-s)^{\alpha-1}}{\Gamma(\alpha)} (X_s-X_{\eta(s)})ds + \kappa_2 \sum_{i=2}^k \mathcal{V} \Delta_{t_{i-1}}\int_{t_{i-1}}^{t_i} \frac{(t_k-s)^{\alpha-1}}{\Gamma(\alpha)}ds.$$
We then take the expectation and get by the triangular inequality
\begin{align*}
   |\E[\mathcal{V} \Delta_{t_k} ]|\le & |\kappa_2| \left| \int_0^{t_k}\frac{(t_k-s)^{\alpha-1}}{\Gamma(\alpha)} \E[\mathcal{V}(X_s-X_{\eta(s)})] ds \right| \\
   & +|\kappa_2|  \sum_{i=2}^k |\E[\mathcal{V} \Delta_{t_{i-1}}]| \frac{(k-i+1)^\alpha-(k-i)^\alpha}{\Gamma(\alpha+1)} \left(\frac{T}{n} \right)^\alpha.
\end{align*}
By~\eqref{eq_LiZeng}, we obtain 
$$ |\E[\mathcal{V} \Delta_{t_k} ]|\le \mathbf{M} +|\kappa_2| C_\alpha \left(\frac{T}{n} \right)^\alpha \sum_{j=1}^{k-1}|\E[\mathcal{V} \Delta_{t_{j}}]| (k-j)^{\alpha-1},$$
with $\mathbf{M}=|\kappa_2| \max_{1\le k\le n} \left| \int_0^{t_k}\frac{(t_k-s)^{\alpha-1}}{\Gamma(\alpha)} \E[\mathcal{V}(X_s-X_{\eta(s)})] ds \right|$. We conclude by the Gronwall type estimate~\cite[Lemma 3.4]{LiZe}.
\end{proof}

\begin{proof}[Proof of Theorem~\ref{thm_covariance_error}] 
  We start by proving the first bound. We apply Lemma~\ref{lem_LiZeng} with $\mathcal{V}=1$ and get 
  $$\max_{1\le k\le n} |\E[X_{t_{k}}]-\E[\xcn_{t_{k}}]|\le C \max_{1\le k\le n} \left|\int_0^{t_k} \frac{(t_k-s)^{\alpha-1}}{\Gamma(\alpha)}(\E[X_s]-\E[X_{\eta(s)}])ds\right|.$$
By Proposition~\ref{prop_esp}, $\E[X_t]=x_0 t^\alpha +z(t)$ where $z$ is a $\mathcal{C}^1$ function on $\R_+$.   We apply respectively \cite[Theorem 2.4 (b) and (a)]{DFF} to get 
\begin{equation}\label{Diethelm_esperance}
  \left|\int_0^{t_k} \frac{(t_k-s)^{\alpha-1}}{\Gamma(\alpha)}(\E[X_s]-\E[X_{\eta(s)}])ds\right| \le \frac{T}{n}\left( C T^{2\alpha-1}+\frac{ \max_{t\in [0,T]}|z'(t)|}{\alpha}T^\alpha \right),
\end{equation} 
 which gives the claim on the expectation. 
  
  We now focus on covariance terms and have 
\begin{align*}
  \E[X_{t_{k_1}}X_{t_{k_2}}]-\E[\check X^n_{t_{k_1}}\check X^n_{t_{k_2}}]&=\E[X_{t_{k_2}}(X_{t_{k_1}}-\check X^n_{t_{k_1}})]+\E[\check X^n_{t_{k_1}}(X_{t_{k_2}}-\check X^n_{t_{k_2}})]\\
 &=\E[X_{t_{k_2}}\Delta_{t_{k_1}}]+\E[\check X^n_{t_{k_1}}\Delta_{t_{k_2}}],
\end{align*} 
with $\Delta_{t_k}$ given by~\eqref{delta_tk}.
We apply Lemma~\ref{lem_LiZeng} twice: first with $\mathcal{V}=X_{t_{k_2}}$ and then with $\mathcal{V}=\check X^n_{t_{k_1}}$. 

We start with $\mathcal{V}=X_{t_{k_2}}$ and get
\begin{equation*}\max_{1\le k\le n}|\E[X_{t_{k_2}}\Delta_{t_k}]|\le C \max_{1\le k\le n}\left|  \int_0^{t_k}\frac{(t_k-u)^{\alpha-1}}{\Gamma(\alpha)} \E[ X_{t_{k_2}} (X_u-X_{\eta(u)})]du   \right|.\end{equation*}
By Theorem~\ref{thm_technique}, we obtain
$$\max_{1\le k\le n}|\E[X_{t_{k_2}}\Delta_{t_k}]|\le C_T \left((1+t_{k_2}^{\alpha-1}){\bf v}_n(\alpha) + \frac{t_{k_2}^{2\alpha-2}}{n} \right). $$
We now apply Lemma~\ref{lem_LiZeng} with $\mathcal{V}=\check X^n_{t_{k_1}}$:
$$\max_{1\le k \le n} |\E[\check X^n_{t_{k_1}}(X_{t_{k}}-\check X^n_{t_k})]|\le C \max_{1\le k \le n} \left| \int_0^{t_k} \frac{(t_k-s)^{\alpha-1}}{\Gamma(\alpha)} \E[\check X^n_{t_{k_1}} (X_s-X_{\eta(s)})] ds\right|.$$
Then, we write 
\begin{align*}
  \E[\check X^n_{t_{k_1}}(X_s-X_{\eta(s)})]=\E[(\check X^n_{t_{k_1}}-X_{t_{k_1}})X_s]+ \E[X_{t_{k_1}}(X_s-X_{\eta(s)})]+ \E[X_{\eta(s)}(X_{t_{k_1}}- \check X^n_{t_{k_1}})],
\end{align*}
and use the triangle inequality. We first notice that the second term  
$$\max_{1\le k\le n}\left| \int_0^{t_k} \frac{(t_k-s)^{\alpha-1}}{\Gamma(\alpha)} \E[X_{t_{k_1}}(X_s-X_{\eta(s)})] ds \right|$$ is again 
upper bounded  by $C_T \left((1+t_{k_1}^{\alpha-1}){\bf v}_n(\alpha) + \frac{t_{k_1}^{2\alpha-2}}{n} \right)$ using Theorem~\ref{thm_technique}. For the two other terms, we have to upper bound
$$\max_{1\le k\le n}\left| \int_0^{t_k} \frac{(t_k-s)^{\alpha-1}}{\Gamma(\alpha)} \E[(\check X^n_{t_{k_1}} -X_{t_{k_1}})X_s] ds \right|  \text{ and }\max_{1\le k\le n}\left| \int_0^{t_k} \frac{(t_k-s)^{\alpha-1}}{\Gamma(\alpha)} \E[(\check X^n_{t_{k_1}} -X_{t_{k_1}})X_{\eta(s)}] ds \right|$$
To do so, we use Lemma~\ref{lem_LiZeng} with $\mathcal{V} = X_s$ and then 
Theorem~\ref{thm_technique}  to get
\begin{align*}
   |\E[(\check X^n_{t_{k_1}} -X_{t_{k_1}})X_s]| &\le C \max_{1\le k \le n} \left|\int_0^{t_k}\frac{(t_k-u)^{\alpha-1}}{\Gamma(\alpha)}\E[X_s(X_u-X_{\eta(u)})]du \right| \\
   & \le  C_T\left((1+s^{\alpha-1}) {\bf v}_n(\alpha) + \frac{s^{2\alpha-2}}{n}\right).\end{align*}
Therefore, we get 
\begin{align*}
  &\left| \int_0^{t_k} \frac{(t_k-s)^{\alpha-1}}{\Gamma(\alpha)} \E[(\check X^n_{t_{k_1}} -X_{t_{k_1}})X_s] ds \right|\\
  \le &  C_T\left( \int_0^{t_k} \frac{(t_k-s)^{\alpha-1}}{\Gamma(\alpha)} (1+s^{\alpha-1}) {\bf v}_n(\alpha) ds + \int_0^{t_k} \frac{(t_k-s)^{\alpha-1}}{\Gamma(\alpha)} \frac{s^{2\alpha-2}}{n} ds  \right) \\
  = &C_T \left( {\bf v}_n(\alpha) \left(\frac{t_k^\alpha}{\Gamma(\alpha+1)} + t_k^{2\alpha-1} \frac{\Gamma(\alpha)}{\Gamma(2 \alpha)} \right)+ t_k^{3\alpha-2} \frac{\Gamma(2\alpha-1)}{\Gamma(3\alpha-1)n}\right)
\end{align*}
If $3\alpha-2\ge 0$, this is upper bounded by $C_T{\bf v}_n(\alpha)$, using that $t_k\le T$. Otherwise, we have $\frac{t_k^{3\alpha-2}}{n} \le  \frac{t_1^{3\alpha-2}}{n} = \frac{T^{3\alpha-2}}{n^{3\alpha-1}}\le C_T  {\bf v}_n(\alpha)$. Therefore, 
$$\max_{1\le k \le n}\left| \int_0^{t_k} \frac{(t_k-s)^{\alpha-1}}{\Gamma(\alpha)} \E[(\check X^n_{t_{k_1}} -X_{t_{k_1}})X_s] ds \right| \le C_T  {\bf v}_n(\alpha) $$ 
in all cases. We now prove that the same estimate holds for
$$\max_{1\le k \le n}\left| \int_0^{t_k} \frac{(t_k-s)^{\alpha-1}}{\Gamma(\alpha)} \E[(\check X^n_{t_{k_1}} -X_{t_{k_1}})X_{\eta(s)}] ds \right|.$$
We use the triangular inequality
\begin{align*}
  &\left| \int_0^{t_k} \frac{(t_k-s)^{\alpha-1}}{\Gamma(\alpha)} \E[(\check X^n_{t_{k_1}} -X_{t_{k_1}})X_{\eta(s)}] ds \right|\\
  \le & |X_0| \int_{0}^{t_1}\frac{(t_k-s)^{\alpha-1}}{\Gamma(\alpha)} |\E[\check X^n_{t_{k_1}} -X_{t_{k_1}}]| ds \\
  &+C_T\int_{t_1}^{t_k} \frac{(t_k-s)^{\alpha-1}}{\Gamma(\alpha)} (1+\eta(s)^{\alpha-1}) {\bf v}_n(\alpha) ds + C_T \int_{t_1}^{t_k} \frac{(t_k-s)^{\alpha-1}}{\Gamma(\alpha)}  \frac{\eta(s)^{2\alpha-2}}{n} ds.
\end{align*}
The first term is bounded by $C_T\frac{T}{n}$  by using~\eqref{weak_error_esp}, and the other terms can be upper bounded as above, using that $\frac{s}2\le \eta(s)$ for $s\ge t_1$. We finally get 
$$\max_{1\le k \le n}\left| \int_0^{t_k} \frac{(t_k-s)^{\alpha-1}}{\Gamma(\alpha)} \E[(\check X^n_{t_{k_1}} -X_{t_{k_1}})X_{\eta(s)}] ds \right| \le C_T {\bf v}_n(\alpha).$$
This gives the first estimate.

Finally, we get the estimate on the covariance~\eqref{cov_weak} as follows
\begin{align*}
  &|Cov(X_{t_{k_1}},X_{t_{k_2}})-Cov(\check X^n_{t_{k_1}},\check X^n_{t_{k_2}})|\\
  \le &|\E[X_{t_{k_1}}X_{t_{k_2}}]-\E[\check X^n_{t_{k_1}}\check X^n_{t_{k_2}}]|  +|\E[X_{t_{k_1}}]| |\E[X_{t_{k_2}}-\xcn_{t_{k_2}}]|+|\E[\xcn_{t_{k_2}}]| |\E[X_{t_{k_1}}-\xcn_{t_{k_1}}]|,
\end{align*} 
noting that $|\E[X_{t_{k_1}}]|$ and $|\E[\xcn_{t_{k_2}}]|$ are bounded (see Remark~\ref{rk_unifbound}) and using that $|\E[X_{t_{k_1}}-\xcn_{t_{k_1}}]|\le \frac{C_T}{n} \le C_T{\bf v}_n(\alpha).$

As $\Psi\in \mathcal{C}_{\exp}^2(\R, \R)$, we apply Lemma~\ref{lem_weak_ito} and get with $m(u)=(1-u)\E[X_{t_k}]+u\E[\xcn_{t_k}]$, $\Sigma(u)=(1-u)\covX(t_k,t_k)+uCov(\xcn_{t_k},\xcn_{t_k})$, $u\in[0,1]$, and $Z_u\sim\mathcal{N}(m(u) ,\Sigma(u))$, 
\begin{align*}
\E[\Psi(\xcn_{t_k})]-\E[\Psi(X_{t_k})]=&\int_0^1  \left( \E[\xcn_{t_k}]- \E[X_{t_k}] \right) \E[\Psi'(Z_u)]du \\&+\int_0^1 \frac 12 \left(Cov(\xcn_{t_k},\xcn_{t_k})-\covX(t_k,t_k) \right) \E[\Psi''(Z_u)]du.
\end{align*} 
By straightforward calculations, $| \E[\Psi'(Z_u)]|+| \E[\Psi''(Z_u)]|\le C e^{C |m(u)|+ \frac{C^2 \Sigma(u)^2}{2}} \le C e^{C\overline{m}+  \frac{C^2 \overline{\Sigma}^2}{2}}$ for some $C \in \R_+$, where $$\overline{m}=\sup_{n\ge 1} \max_{1\le k \le n}\max(|\E(X_{t_k})|,|\E(\xcn_{t_k}|) \text{ and }\overline{\Sigma}=\sup_{n\ge 1} \max_{1\le k \le n}\max(\covX(t_k,t_k), Cov(\xcn_{t_k},\xcn_{t_k}) )$$ are finite by Equations~\eqref{esp_VOU} and \eqref{cov_formula} and Remark~\ref{rk_unifbound}. Using again Equation~\eqref{cov_weak}, we get
$$|\E[\Psi(\xcn_{t_k})]-\E[\Psi(X_{t_k})]| \le  C e^{C\overline{m}+  \frac{C^2 \overline{\Sigma}^2}{2}} C_T  \left( (1+t_k^{\alpha-1}) {\bf v}_n(\alpha) + \frac{t_k^{2\alpha-2}}{n}\right).\qedhere $$
\end{proof}
We see from the proof that the covariance estimation~\eqref{cov_weak} is only needed at time $t_n=T$ to deduce the weak error rate. However, the full estimation~\eqref{cov_weak} at times $t_{k_1}$ and $t_{k_2}$ will be crucial to get the weak error rate of the rough mean-reverting stochastic volatility model. We will also need the following estimate between the Malliavin derivatives of the scheme and the SVE.  
\begin{lemma}\label{lem_err_Malliavin3}  
    Let $1\le k\le n$. We have $\xcn_{t_k} \in \mathbb{D}^{1,2}$. The Malliavin derivative of $\xcn_{t_k}$ is deterministic and satisfies for $s\in(0,t_k)$,
    \begin{align}\D_s\xcn_{t_k}&= \kappa_2 \int_0^{t_k} \D_s \xcn_{\eta(u)} \frac{(t_k-u)^{\alpha-1}}{\Gamma(\alpha)}du + \sigma\frac{(t_k-s)^{\alpha-1}}{\Gamma(\alpha)}. \label{eq_dxcn}
    \end{align}
    Besides, there is a constant $C \in \R_+$ depending on the parameters $\alpha,\kappa_2,\sigma$, $\gamma$ and $T>0$ such that for all $n\ge 1$ and $t_j\le t_k\le T$, $$\left|\int_0^{t_j} \left(\D_s\xcn_{t_k} -\D_sX_{t_k}\right) ds \right| \le C \frac T n \text{ and }\int_0^{t_k}|\D_s\xcn_{t_k} | ds\le C.$$
  \end{lemma}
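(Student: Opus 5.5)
The plan is to exploit that the scheme is an affine functional of $W$. Unrolling \eqref{eq:scheme_price} over the grid shows that $\xcn_{t_k}$ solves a finite linear recursion
\[
\xcn_{t_k}=x_0+\sum_{i=1}^{k}(\kappa_1+\kappa_2\xcn_{t_{i-1}})\int_{t_{i-1}}^{t_i}\frac{(t_k-u)^{\alpha-1}}{\Gamma(\alpha)}du+\sigma\int_0^{t_k}\frac{(t_k-s)^{\alpha-1}}{\Gamma(\alpha)}dW_s .
\]
By induction on $k$, each $\xcn_{t_k}$ is a deterministic constant plus a finite linear combination of the Wiener integrals $\int_0^{t_j}\frac{(t_j-s)^{\alpha-1}}{\Gamma(\alpha)}dW_s$, $j\le k$. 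Their integrands lie in $L^2$ because $\alpha>1/2$, so $\xcn_{t_k}\in\mathbb{D}^{1,2}$ and its Malliavin derivative is deterministic. Applying $\D_s$ to the identity $\xcn_{t_k}=x_0+\int_0^{t_k}\frac{(t_k-u)^{\alpha-1}}{\Gamma(\alpha)}(\kappa_1+\kappa_2\xcn_{\eta(u)})du+\sigma\int_0^{t_k}\frac{(t_k-s)^{\alpha-1}}{\Gamma(\alpha)}dW_s$ by linearity gives \eqref{eq_dxcn}.

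For the $L^1$ bound, set $a_k=\int_0^{t_k}|\D_s\xcn_{t_k}|ds$. Since $\D_s\xcn_{\eta(u)}=0$ for $s\ge \eta(u)$, integrating \eqref{eq_dxcn} in $s$ and using Fubini gives
\[
a_k\le |\kappa_2|\sum_{i=2}^{k}a_{i-1}\int_{t_{i-1}}^{t_i}\frac{(t_k-u)^{\alpha-1}}{\Gamma(\alpha)}du+\sigma\frac{T^\alpha}{\Gamma(\alpha+1)} .
\]
The weights are bounded by $C_\alpha (T/n)^\alpha (k-i+1)^{\alpha-1}$, exactly as in the proof of Lemma~\ref{lem_LiZeng} via \eqref{eq_LiZeng}. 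The discrete Gronwall lemma \cite[Lemma 3.4]{LiZe} then yields $\max_k a_k\le C$ uniformly in $n$.

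For the first estimate, take $e_k(s)=\D_s\xcn_{t_k}-\D_sX_{t_k}$. Differentiating \eqref{SVE_OU} gives $\D_sX_{t_k}=\kappa_2\int_s^{t_k}\frac{(t_k-u)^{\alpha-1}}{\Gamma(\alpha)}\D_sX_u du+\sigma\frac{(t_k-s)^{\alpha-1}}{\Gamma(\alpha)}$, and subtracting this from \eqref{eq_dxcn} gives
\[
e_k(s)=\kappa_2\int_0^{t_k}\frac{(t_k-u)^{\alpha-1}}{\Gamma(\alpha)}\big(\D_sX_{\eta(u)}-\D_sX_u\big)du+\kappa_2\int_0^{t_k}\frac{(t_k-u)^{\alpha-1}}{\Gamma(\alpha)}e_{\eta(u)}(s)\,du .
\]
Define $g_k=\sup_{j\le k}\big|\int_0^{t_j}e_k(s)ds\big|$. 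This is where the signed integral, rather than $\int|e_k|$, is essential: integrating in $s$ over $[0,t_j]$ commutes with the $u$-integral, and for the last term $\int_0^{t_j}e_{\eta(u)}(s)ds=\int_0^{t_j\wedge\eta(u)}e_{\eta(u)}(s)ds$, which is controlled by $g_{\eta(u)}$. The first term becomes
\[
\kappa_2\int_0^{t_k}\frac{(t_k-u)^{\alpha-1}}{\Gamma(\alpha)}\big(\phi_j(\eta(u))-\phi_j(u)\big)du,\qquad \phi_j(v)=\int_0^{t_j\wedge v}\D_sX_v\,ds .
\]

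The main obstacle is showing that this first term is $O(1/n)$ uniformly in $j$. My approach is to use \eqref{der_Malliavin_X}: for $v\le t_j$ it gives $\phi_j(v)=\sigma\sum_i\kappa_2^{i-1}\frac{v^{i\alpha}}{\Gamma(i\alpha+1)}$, which has the form $c\,v^\alpha+z(v)$ with $z\in\mathcal{C}^1$. This is exactly the structure of $\E[X_t]$ in the proof of \eqref{weak_error_esp}, so \cite[Theorem 2.4]{DFF} applies as in \eqref{Diethelm_esperance} and gives $C\frac Tn$. For $v>t_j$, $\phi_j(v)=\sigma\sum_i\kappa_2^{i-1}\frac{v^{i\alpha}-(v-t_j)^{i\alpha}}{\Gamma(i\alpha+1)}$, whose only singular part is $(v-t_j)^{\alpha}$. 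Since $t_j$ is a grid point, this is again of the form handled by \cite{DFF} after the shift $v\mapsto v-t_j$, with a constant uniform in $j$; the $\mathcal{C}^1$ remainder contributes $O(T/n)$ directly. A final application of the Gronwall lemma \cite{LiZe} to $g_k$ then gives $g_n\le C\frac Tn$.
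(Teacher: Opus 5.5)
Your proposal is correct and follows essentially the same route as the paper. Both arguments first show by induction that $\D_s\xcn_{t_k}$ is deterministic and satisfies \eqref{eq_dxcn}. They then subtract the two Volterra-type equations for the Malliavin derivatives and integrate over $[0,t_j]$. Next they compute $\int_0^{t_j}(\D_sX_u-\D_sX_{\eta(u)})ds$ explicitly, using the grid-point shift $u\mapsto u-t_j$ so that \cite[Theorem 2.4]{DFF} applies, and they close both estimates with the discrete Gronwall lemma \cite[Lemma 3.4]{LiZe}. The only cosmetic difference is that you run Gronwall on $g_k=\sup_{j\le k}\left|\int_0^{t_j}e_k(s)ds\right|$ rather than on $I\Delta_{j,k}$ for fixed $j$, which handles the indices $i<j$ in the recursion a little more transparently.
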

  Let us note that $\D_s \xcn_{t_k}$ not only blows up when $s\to t_k$, but also from the induction formula~\eqref{eq_dxcn} when $s\to t_j$ for $j\le k$, since $\D_s \xcn_{t_k}=\sum_{j=1}^{k-1} \int_{t_j}^{t_{j+1}}  \frac{(t_k-u)^{\alpha-1}}{\Gamma(\alpha)}du \D_s \xcn_{t_j}+ \sigma\frac{(t_k-s)^{\alpha-1}}{\Gamma(\alpha)}$. In contrast, $\D_sX_{t_k}$ is continuous for $s<t_k$ and only blows up when $s\to t_k$. Therefore, there is no hope to get a uniform bound on $|\D_s \xcn_{t_k}- \D_sX_{t_k}|$ for $s\in [0,t_k)$. However, Lemma~\ref{lem_err_Malliavin3} shows that the error between the integrals has a nice rate of convergence. Besides,  We will also need the following estimates on some integrals  of  $|\D_s \xcn_{t_k}- \D_sX_{t_k}|$.
\begin{lemma}\label{lem_err_Malliavin_int}
  There exists a constant $C \in \R_+$ that does not depend on~$n\in \N^*$ such that
\begin{equation*}
    \max_{0\le k, \ell \le n}\int_{0}^{t_k}  |\D_s X_{t_\ell}-\D_s\xcn_{t_\ell}| ds \le C n^{-\alpha}.
\end{equation*}
\end{lemma}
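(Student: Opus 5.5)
The idea is to set up a discrete Volterra inequality for the $L^1$-in-$s$ norm of the Malliavin error and close it with the same Gronwall-type argument as in Lemma~\ref{lem_LiZeng}. Write $K(r)=\frac{r^{\alpha-1}}{\Gamma(\alpha)}$ and $e_\ell(s)=\D_sX_{t_\ell}-\D_s\xcn_{t_\ell}$. Both derivatives vanish for $s\ge t_\ell$. Hence
\[
\int_0^{t_k}|e_\ell(s)|ds\le E_\ell:=\int_0^{t_\ell}|e_\ell(s)|ds ,
\]
and it suffices to show $\max_{\ell\le n}E_\ell\le Cn^{-\alpha}$. Note $E_0=0$.

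\textbf{Step 1: the recursion.} From~\eqref{SVE_OU}, $\D_sX_t=\kappa_2\int_s^tK(t-u)\D_sX_u\,du+\sigma K(t-s)$; this can also be read off from~\eqref{der_Malliavin_X}. Subtracting~\eqref{eq_dxcn} gives
\[
e_\ell(s)=\kappa_2\int_0^{t_\ell}K(t_\ell-u)\bigl(\D_sX_u-\D_sX_{\eta(u)}\bigr)du+\kappa_2\int_0^{t_\ell}K(t_\ell-u)\,e_{\eta(u)}(s)\,du ,
\]
with the convention that all derivatives vanish when $s$ exceeds the time index. Take absolute values, integrate in $s\in[0,t_\ell]$ and use Fubini. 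This yields
\[
E_\ell\le |\kappa_2|\int_0^{t_\ell}K(t_\ell-u)\,A(u)\,du+|\kappa_2|\sum_{j=1}^{\ell-1}E_j\int_{t_j}^{t_{j+1}}K(t_\ell-u)\,du,
\qquad A(u):=\int_0^u|\D_sX_u-\D_sX_{\eta(u)}|\,ds .
\]
Since $\int_{t_j}^{t_{j+1}}K(t_\ell-u)du\le C_\alpha (T/n)^\alpha(\ell-j)^{\alpha-1}$ (as in the proof of Lemma~\ref{lem_LiZeng}), the Gronwall estimate~\cite[Lemma 3.4]{LiZe} gives
\[
\max_\ell E_\ell\le C\,\sup_{u\le T}A(u)\int_0^TK(T-u)\,du .
\]
It therefore remains to prove $\sup_{u\le T}A(u)\le Cn^{-\alpha}$.

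\textbf{Step 2: bounding $A(u)$.} Set $h=u-\eta(u)\le T/n$ and $g(r)=\sigma\sum_{i\ge1}\kappa_2^{i-1}\frac{r^{i\alpha-1}}{\Gamma(i\alpha)}$, so that $\D_sX_t=g(t-s)\mathbf 1_{s<t}$.

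\emph{The piece $s\in(\eta(u),u)$.} Here $\D_sX_{\eta(u)}=0$, so the contribution is
\[
\int_0^h|g(r)|dr\le Ch^\alpha .
\]

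\emph{The piece $s<\eta(u)$.} Put $r=\eta(u)-s$; the contribution is at most $\int_0^{\eta(u)}|g(r+h)-g(r)|dr$, which I split term by term in the series.
\begin{itemize}
\item For $i=1$, $r^{\alpha-1}$ is decreasing, so
\[
\int_0^{\eta}\bigl(r^{\alpha-1}-(r+h)^{\alpha-1}\bigr)dr=\frac{\eta^\alpha+h^\alpha-(\eta+h)^\alpha}{\alpha}\le\frac{h^\alpha}{\alpha}.
\]
\item For $i\ge2$, $\beta=i\alpha-1>0$ and $r^\beta$ is increasing, so
\[
\int_0^\eta\bigl((r+h)^\beta-r^\beta\bigr)dr\le \frac{(\eta+h)^{\beta+1}-\eta^{\beta+1}}{\beta+1}\le (T+1)^{\beta}h .
\]
\end{itemize}
The monotonicity is what makes the $i\ge 2$ terms harmless: it turns a pointwise H\"older bound $h^{2\alpha-1}$, which would be too weak, into an $O(h)$ bound after integration. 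The resulting series $\sum_i|\kappa_2|^{i-1}(T+1)^{i\alpha}/\Gamma(i\alpha)$ converges. Altogether $A(u)\le C(h^\alpha+h)\le Cn^{-\alpha}$, which concludes the argument.

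The main obstacle I anticipate is the bookkeeping in Step 1. Because of the indicator conventions, the term $e_{\eta(u)}(s)$ must be interpreted correctly both for $u<t_1$ and when $s>\eta(u)$. In those ranges the difference $\D_sX_u-\D_sX_{\eta(u)}$ has to absorb the full $\D_sX_u$, and this is exactly what the first piece of $A(u)$ accounts for.
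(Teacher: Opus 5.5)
Your proof is correct and follows essentially the same route as the paper: the recursion \eqref{rec_delta_Malliavin}, integration of its absolute value in $s$, the Gronwall-type estimate of \cite[Lemma 3.4]{LiZe}, and the bound $\int_0^{u}|\D_sX_u-\D_sX_{\eta(u)}|ds\le Cn^{-\alpha}$ of \eqref{bound_integ}. The differences are minor: you reduce to $k=\ell$ using that both Malliavin derivatives vanish for $s\ge t_\ell$, and you verify \eqref{bound_integ} by an explicit monotonicity computation instead of citing \cite[Lemma 4.5]{FSW} for the leading kernel term.
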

  This lemma  will be crucial and enough to prove the weak rate of convergence for the mean-reverting rough stochastic volatility model. 
   Their proofs are postponed in the appendix section.

\section{Weak error approximation with the cubic test function}\label{Sec_cubic}

In this section, we are interested in studying the weak error convergence rate of the scheme~\eqref{eq:scheme_price}. Namely, for $\Phi:\R \to \R$, we want to upper bound $\E[\Phi(\check{L}^n_T)]-\E[\Phi(L_T)]$. When $\Phi$ has a polynomial growth, these expectations are well defined. More precisely, we have the following result.  
\begin{lemma}
    Let $\Phi,f:\R\to \R$ be measurable. We assume that:
    \begin{itemize}
        \item there exists $C,p>0$ such that $|\Phi(x)|\le C(1+|x|^p)$ for all~$x\in \R$,
        \item there exists $C>0$ such that $|f(x)|\le Ce^{C|x|}$.
    \end{itemize}  
    Then, $\E[\Phi(\check{L}^n_T)]$ and $\E[\Phi(L_T)]$ are well  defined and finite.
\end{lemma}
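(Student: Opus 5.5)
It suffices to show that $\E[|L_T|^p]<\infty$ and $\E[|\check L^n_T|^p]<\infty$ for every $p\ge 1$ (for fixed $n$). Then the growth bound $|\Phi(x)|\le C(1+|x|^p)$ gives integrability of $\Phi(L_T)$ and $\Phi(\check L^n_T)$. Measurability of $\Phi$ ensures these are random variables.

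We write $L_T=L_0+\int_0^T b(X_s)ds+\int_0^T f(X_s)dB_s$, and similarly for the scheme. For the integrals to make sense we need $b$ to have exponential growth as well. The later sections assume this (via $\mathcal{C}^\ell_{\exp}$), and in the stochastic-volatility case $b=-\frac12 f^2$ it follows from the bound on $f$; I would note this, or simply take $b=0$ as in Section~\ref{Sec_cubic}.

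\textbf{Step 1: exponential moments of the volatility.} By Propositions~\ref{prop_esp} and~\ref{prop_OUexplicit}, $X_s$ is Gaussian. Its mean is bounded on $[0,T]$ by continuity of the Mittag-Leffler expression, and its variance is bounded by the series~\eqref{cov_formula} (or $\int_0^s(\D_rX_s)^2dr$), uniformly in $s\in[0,T]$. Hence, for every $c>0$,
\[\sup_{s\in[0,T]}\E[e^{c|X_s|}]\le 2\sup_{s\in[0,T]} e^{c|\E X_s|+c^2\Var(X_s)/2}<\infty.\]
For the scheme, $\xcn$ is also Gaussian, since it is an affine functional of $W$: solving recursively at grid points, each $\xcn_{t_k}$ is an affine combination of the Gaussian integrals $\int_0^{t_k}\frac{(t_k-s)^{\alpha-1}}{\Gamma(\alpha)}dW_s$. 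Only the finitely many values $\xcn_{t_0},\dots,\xcn_{t_{n-1}}$ enter $\check L^n$, so $\max_k\E[e^{c|\xcn_{t_k}|}]<\infty$ trivially for fixed $n$. Remark~\ref{rk_unifbound} even makes this uniform in $n$.

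\textbf{Step 2: moments of $L$.} By Jensen/Hölder, $\E|\int_0^T b(X_s)ds|^p\le T^{p-1}\int_0^T C^p\E[e^{pC|X_s|}]ds<\infty$. For the stochastic integral, the BDG inequality followed by Jensen gives
\[\E\Big|\int_0^T f(X_s)dB_s\Big|^p\le C_p\E\Big[\Big(\int_0^T f(X_s)^2ds\Big)^{p/2}\Big]\le C_pT^{p/2-1}\int_0^T C^p\E[e^{pC|X_s|}]ds,\]
for $p\ge 2$. This is finite by Step 1. Joint measurability of $(s,\omega)\mapsto X_s$ holds: we may take a continuous (or at least progressively measurable) version, given by the Gaussian representation~\eqref{Y_cal}. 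The same estimates apply to $\check L^n_T$, where the integrands are piecewise constant in $s$.

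The only mildly delicate point is the justification that the Itô integral is well defined and that BDG applies. This follows since $\E\int_0^T f(X_s)^2ds<\infty$ by Step 1 and the integrand is adapted. There is no real obstacle; the key input is the Gaussianity of $X$ and $\xcn$ with bounded mean and variance on $[0,T]$.
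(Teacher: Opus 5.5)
Your proposal is correct and follows the paper's proof. In both, BDG and Jensen reduce the $p$-th moment of the stochastic integral to $\int_0^T \E[e^{pC|X_t|}]dt$, which is finite because $X$ and $\xcn$ are Gaussian with bounded mean and variance on $[0,T]$; for the scheme the paper cites Remark~\ref{rk_unifbound}, while you also note that for fixed $n$ only finitely many grid values enter. Your treatment of the drift $\int_0^T b(X_s)ds$ is a sensible completion: the paper's proof silently omits it because $b\equiv 0$ in that section, and as you note it requires an exponential growth bound on $b$.
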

\begin{proof}
Without loss of generality, we assume $p\ge 2$. By Burkholder-Davis-Gundy's inequality, we have 
\begin{align*}
    \E\left[\left|\int_0^T f(X_t) dB_t \right|^p\right] &\le C_p \E\left[\left|\int_0^T f^2(X_t)dt \right|^{p/2}\right] \\
    &\le C_p T^{p/2-1} \int_0^T \E\left[|f(X_t)|^p\right]dt \\
    &\le C_p T^{p/2-1} \int_0^T C^p\E\left[e^{pC|X_t|}\right]dt,
\end{align*}
which is finite since $X_t$ is a Gaussian process with  continuous mean and variance, see Propositions~\ref{prop_esp} and~\ref{prop_OUexplicit}.  The same conclusion holds for~$\xcn$ by using Remark~\ref{rk_unifbound}.
\end{proof}

 For the sake of readability, we start by the case $\Phi(x)=x^3$ as in Gassiat~\cite{Gassiat}, to better emphasize the difficulty coming from the discretization of the rough Volterra SDE and how to manage it. The case $\Phi(x)=x^2$ is too simple due to the Itô isometry as pointed out by Neuenkirch in~\cite{BHT}. We also assume in this section $b\equiv 0$ and $L_0=0$ in~\eqref{SVE_OU} and~\eqref{eq:scheme_price} for the sake of simplicity. We want to analyse the weak error:
$$\mathcal{E}=\E\left[\left(\int_0^T f(X_t) dB_t\right)^3\right]-\E\left[\left(\int_0^T f(\xcn_{\eta(t)}) dB_t\right)^3\right].$$
It is known (see~\cite[Proposition 6.1]{FSW}) to be the prototypical case to get familiar with the structure of the weak error rate, avoiding too much heavy notation. Friz et al.~\cite{FSW} have indeed noticed this and were able to extend to any polynomial test function by an induction procedure with iterated integrals. In Section~\ref{Sec_polynomial}, we show that we can also in our case extend the result to any polynomial functions. However, the iterated integrals coming from arbitrary polynomial functions raise new difficulties due to the discretization step, which does not appear in the work by Friz et al.~\cite{FSW}. 

By Itô calculus, we have 
\begin{align*}
  \left(\int_0^Tf(X_{t})dB_t \right)^3=&3\int_0^T \left(\int_0^tf(X_{s})dB_s \right)^2 f(X_{t})dB_t \\
  &+ 3 \int_0^T \left(\int_0^tf(X_{s})dB_s \right) f(X_{t})^2 dt,
\end{align*}
and
\begin{align*}
  \left(\int_0^Tf(\xcn_{\eta(t)})dB_t \right)^3=&3\int_0^T \left( \int_0^tf(\xcn_{\eta(s)})dB_s \right)^2 f(\xcn_{\eta(t)})dW_t\\&+ 3 \int_0^T \left(\int_0^tf(\xcn_{\eta(s)})dB_s \right) f(\xcn_{\eta(t)})^2 dt.  
\end{align*}
We assume that~$f\in \mathcal{C}_{\exp}^3(\R, \R)$, see~\eqref{def_Cell}.
This gives in particular that the Itô integrals are true martingales and have a zero expectation. We then use the Clark-Ocone formula (see e.g.~\cite[Proposition 1.3.14]{Nualart} that can be applied thanks to Proposition~\ref{prop_OUexplicit} and  Lemma~\ref{lem_err_Malliavin3}) to get
\begin{align*}\E\left[\left(\int_0^Tf(X_{t})dB_t \right)^3 \right]&=3\int_0^T \E\left[\left(\int_0^tf(X_{s})dB_s \right) f(X_{t})^2\right]dt\\& =  6\rho \int_0^T \int_0^{t} \E[f(X_{s}) ff'(X_{t})\D_s X_{t}] ds dt.\end{align*}
and
\begin{align*}\E\left[\left(\int_0^Tf(\xcn_{\eta(t)})dB_t \right)^3 \right]&=3\int_0^T \E\left[\left(\int_0^tf(\xcn_{\eta(s)})dB_s \right) f(\xcn_{\eta(t)})^2\right]dt\\& = 6\rho \int_0^T \int_0^{\eta(t)} \E[f(\xcn_{\eta(s)}) ff'(\xcn_{\eta(t)})\D_s \xcn_{\eta(t)}] ds dt.\end{align*}
Let us recall that from Proposition~\ref{prop_OUexplicit} and  Lemma~\ref{lem_err_Malliavin3}, both $\D_sX_t$ and $\D_s \xcn_{\eta(t)}$ are deterministic. 
We write then this error as $\mathcal{E}=6\rho(\mathcal{E}_1+\mathcal{E}_2)$, with
\begin{align}
  \mathcal{E}_1&=\int_0^T \int_0^{t}\E[f(X_{s}) ff'(X_{t})]\D_sX_t-\E[f(X_{\eta(s)}) ff'(X_{\eta(t)})] \D_s X_{\eta(t)}  ds dt , \label{def_E1}\\
  \mathcal{E}_2&=\int_0^T \int_0^{\eta(t)} \E[f(X_{\eta(s)}) ff'(X_{\eta(t)})] \D_s X_{\eta(t)}-\E[f(\xcn_{\eta(s)}) ff'(\xcn_{\eta(t)})] \D_s \xcn_{\eta(t)}   ds dt,\label{def_E2}
\end{align}
noting that $\D_s X_{\eta(t)}=\D_s \xcn_{\eta(t)}=0$ for $s \in (\eta(t),t)$.
The analysis of~$\mathcal{E}_1$ adapts the arguments of Gassiat~\cite{Gassiat} and Friz et al.~\cite{FSW} for the rough Stochastic Volterra Equation~$X$, since the model considered in these papers corresponds to the case $\kappa_1=\kappa_2=0$. In contrast, the analysis of~$\mathcal{E}_2$ is completely new  and exploits the results obtained in Section~\ref{Sec_rOU} to upper bound the approximation error on the volatility process~$X$. The striking result is that we still get a convergence rate in $O({\bf v}_n(\alpha))$, the same as in~\cite{Gassiat} and~\cite{FSW}. Thus, the additional discretization of~$X$ does not affect the convergence rate. 

\begin{theorem}\label{thm_cubic}
   Let~$L$ be the process defined by~\eqref{SVE_OU} with $L_0=0$,  $b\equiv 0$ and $f\in  \mathcal{C}_{\exp}^3(\R, \R)$. Let $\check{L}^n$ denote the approximation scheme~\eqref{eq:scheme_price} with time step $T/n$.  Then, there exists $C \in \R_+$ such that for $n\ge 1$,  
$$\left|\E[(\check{L}^n_T)^3]-\E[(L_T)^3] \right| \le C \vn.$$
\end{theorem}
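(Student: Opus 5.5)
Since the error has already been reduced to $\mathcal{E}=6\rho(\mathcal{E}_1+\mathcal{E}_2)$ with $\mathcal{E}_1,\mathcal{E}_2$ given by \eqref{def_E1}--\eqref{def_E2}, I will bound each term by $C\vn$ separately. For $\mathcal{E}_1$, which only involves the exact Gaussian process $X$, I follow Gassiat~\cite{Gassiat} and Friz et al.~\cite{FSW}. Set $\varphi(s,t)=\E[f(X_s)ff'(X_t)]$, a smooth function of the mean and covariance of the Gaussian pair $(X_s,X_t)$ (Propositions~\ref{prop_esp} and~\ref{prop_OUexplicit}). I split
$$\mathcal{E}_1=\int_0^T\!\!\int_0^t \varphi(s,t)\big(\D_sX_t-\D_sX_{\eta(t)}\big)\,ds\,dt+\int_0^T\!\!\int_0^t\big(\varphi(s,t)-\varphi(\eta(s),\eta(t))\big)\D_sX_{\eta(t)}\,ds\,dt.$$
Each increment $t\mapsto t-\eta(t)$ is handled by a first-order Taylor expansion in the time variable. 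The key point is that the leading terms have zero average over each cell $[t_k,t_{k+1})$, up to the boundary singularity of the kernel. Concretely, $\D_sX_t=\sigma K(t-s)$ with $K(u)=\sum_i\kappa_2^{i-1}u^{i\alpha-1}/\Gamma(i\alpha)$, so $K$ behaves like $u^{\alpha-1}$ plus smoother terms. The same computation as in \cite{Gassiat} for $u^{\alpha-1}$ then produces the rates $n^{-1}$, $\log n/n$ and $n^{1-3\alpha}$.

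For the time regularity of $\varphi$, I use the Gaussian integration by parts of Lemma~\ref{lem_weak_ito}. This writes differences of $\varphi$ as integrals over interpolated Gaussians of derivatives of $f, ff'$, multiplied by differences of means and covariances. The covariance increments $\covX(s,t)-\covX(\eta(s),t)$ are of order $(s-\eta(s))^{\alpha-1/2}$ in the worst case, but after integration against the kernel they reduce to quantities of the type controlled in Theorem~\ref{thm_technique}. This is why I expect Theorem~\ref{thm_technique}, rather than crude Hölder bounds, to be the right tool. The exponential growth of $f,f',f'',f'''$ is harmless because all the Gaussian variances are bounded uniformly (Remark~\ref{rk_unifbound}).

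For $\mathcal{E}_2$, I write, with $\check\varphi(s,t)=\E[f(\xcn_{\eta(s)})ff'(\xcn_{\eta(t)})]$,
$$\mathcal{E}_2=\int_0^T\!\!\int_0^{\eta(t)}\big(\varphi-\check\varphi\big)(\eta(s),\eta(t))\,\D_sX_{\eta(t)}\,ds\,dt+\int_0^T\!\!\int_0^{\eta(t)}\check\varphi(\eta(s),\eta(t))\big(\D_sX_{\eta(t)}-\D_s\xcn_{\eta(t)}\big)ds\,dt.$$
For the first term I apply the bivariate version of Lemma~\ref{lem_weak_ito}, interpolating linearly between the laws of $(X_{t_j},X_{t_k})$ and $(\xcn_{t_j},\xcn_{t_k})$. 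This bounds $|\varphi-\check\varphi|(t_j,t_k)$ by $C\big((1+t_j^{\alpha-1}+t_k^{\alpha-1})\vn+(t_j^{2\alpha-2}+t_k^{2\alpha-2})/n\big)$, using Theorem~\ref{thm_covariance_error}. I then integrate against $|\D_sX_{\eta(t)}|\le C(1+(\eta(t)-s)^{\alpha-1})$. The singular weights $\eta(s)^{\alpha-1}$ and $\eta(s)^{2\alpha-2}/n$ are integrable after a Beta-type computation, exactly as in the proof of Theorem~\ref{thm_covariance_error}, using $\eta(s)\ge s/2$ for $s\ge t_1$ and treating the first cell separately. This gives $C\vn$. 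For the second term, $\check\varphi(\eta(s),\eta(t))$ is piecewise constant in $s$ on cells, and the Malliavin derivatives are deterministic. A naive bound via Lemma~\ref{lem_err_Malliavin_int} only gives $n^{-\alpha}$, which is insufficient.

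Instead, I perform an Abel summation in $s$. On each cell, I write $\check\varphi(t_j,t_k)=\check\varphi(t_{k'},t_k)+\sum_{i}\big(\check\varphi(t_{i},t_k)-\check\varphi(t_{i+1},t_k)\big)$, so that the quantity becomes a sum of increments of $\check\varphi$ multiplied by $\int_0^{t_i}(\D_sX_{t_k}-\D_s\xcn_{t_k})ds$. Lemma~\ref{lem_err_Malliavin3} bounds the latter by $CT/n$, uniformly in $i$. The sum of increments of $\check\varphi$ in its first variable is controlled by a bounded-variation estimate. I expect this estimate to be obtainable from $\int_0^{t_k}|\D_s\xcn_{t_k}|ds\le C$ together with the smoothness of the Gaussian expectation in its covariance parameters. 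If the variation is only Hölder, I would fall back to splitting off the last few cells near $\eta(t)$ and using Lemma~\ref{lem_err_Malliavin_int} there.

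The main obstacle is the $\mathcal{E}_1$ analysis in the regime $\alpha\le 2/3$. There one must exhibit the cancellation producing $n^{1-3\alpha}$ rather than $n^{1-2\alpha}$, and the additional mean-reverting terms in $K$ and in $\covX$ must not destroy it. I would try to isolate the pure fractional part $\sigma(t-s)^{\alpha-1}/\Gamma(\alpha)$ and reuse Gassiat's computation. The remainder is smoother (order $(t-s)^{2\alpha-1}$) and gives $O(1/n)$ directly.
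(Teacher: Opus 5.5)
The gap is in the second term of your splitting of $\mathcal{E}_2$. You put the \emph{scheme} expectation $\check\varphi(\eta(s),\eta(t))$ in front of $\D_sX_{\eta(t)}-\D_s\xcn_{\eta(t)}$. Your Abel summation then needs $\sum_{i<k}|\check\varphi(t_{i+1},t_k)-\check\varphi(t_i,t_k)|$ to be bounded, or at least $O(n\vn)$, uniformly in $k$ and $n$. Nothing you invoke gives this.

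The bound $\int_0^{t_k}|\D_s\xcn_{t_k}|ds\le C$ controls sizes, not how $\E[\xcn_{t_i}]$, $\Var(\xcn_{t_i})$ and $\cov(\xcn_{t_i},\xcn_{t_k})=\int_0^{t_i}\D_s\xcn_{t_i}\D_s\xcn_{t_k}ds$ vary with $i$; recall that $\D_s\xcn_{t_k}$ blows up at every grid point below $t_k$. Theorem~\ref{thm_covariance_error} only controls the distance between scheme and exact moments at each grid point, not the increments of the scheme's moments. Your fallback (splitting off the last few cells and using Lemma~\ref{lem_err_Malliavin_int}) does not help, because the issue is the accumulation of increments over all cells.

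The paper uses the mirror-image split precisely to avoid this. Its $\mathcal{E}_{2,1}$ puts the difference of expectations against $\D_s\xcn_{\eta(t)}$, handled with \eqref{majo_der_malliavin_sch}. Its $\mathcal{E}_{2,2}$ puts the \emph{exact} $\phi(\eta(s),t_k)$ against the Malliavin error. Corollary~\ref{cor_phi}~(2) gives $|\partial_1\phi(u,t_k)|\le C(u^{2\alpha-2}+(t_k-u)^{2\alpha-2})$, which is integrable on $(0,t_k)$. So writing $\phi(\eta(s),t_k)=\phi(t_k,t_k)-\int_{\eta(s)}^{t_k}\partial_1\phi(u,t_k)du$, then using Fubini and Lemma~\ref{lem_err_Malliavin3}, gives $O(1/n)$. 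Compare the remark after \eqref{product_expansion} about keeping exact quantities in the term that is integrated by parts.

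Your version can be repaired the same way. Bound $|\check\varphi(t_{i+1},t_k)-\check\varphi(t_i,t_k)|\le|\varphi(t_{i+1},t_k)-\varphi(t_i,t_k)|+|e_{i+1}|+|e_i|$, with $e_i=(\check\varphi-\varphi)(t_i,t_k)$. The first sum is at most $\int_0^{t_k}|\partial_1\varphi(u,t_k)|du\le C$. Corollary~\ref{cor_weak_ito} with Theorem~\ref{thm_covariance_error} gives $\sum_{i\le n}|e_i|\le C\big(n(1+t_k^{\alpha-1})\vn+1+t_k^{2\alpha-2}\big)$. After multiplying by $C/n$ and integrating in $t_k$, this is $O(\vn)$. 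Your first term is fine, and slightly simpler than the paper's $\mathcal{E}_{2,1}$: the weight is the explicit kernel $|\D_sX_{\eta(t)}|\le C(1+(\eta(t)-s)^{\alpha-1})$, so a Beta integral replaces \eqref{majo_der_malliavin_sch}.

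On $\mathcal{E}_1$, your plan is only a sketch and picks the wrong tool. The covariance increments are $O(|s-\eta(s)|^{2\alpha-1})$, not merely $O(|s-\eta(s)|^{\alpha-1/2})$: Lemma~\ref{lem:covariance} gives \eqref{cov_increment} and \eqref{majo_all_s}, hence the H\"older and derivative bounds of Corollary~\ref{cor_phi}. With exponent $\alpha-1/2$, the near-diagonal region alone would give $n^{1/2-2\alpha}$, which is not $O(\vn)$ for $\alpha<3/4$. With the sharp exponent, crude bounds suffice, and Theorem~\ref{thm_technique} plays no role here; it is needed only for the volatility error in Theorem~\ref{thm_covariance_error}.

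Concretely, the paper splits $\mathcal{E}_{1,1}$ into two parts. The first is $\phi(t,t)\int_0^t(\D_sX_t-\D_sX_{\eta(t)})ds$, whose $s$-integral is explicit; $\int_0^T(t^{i\alpha}-\eta(t)^{i\alpha})dt=O(1/n)$ supplies the cancellation you allude to. The second is a $(\phi(s,t)-\phi(t,t))$ part, bounded via $|t-s|^{2\alpha-1}$ and hypergeometric integrals. $\mathcal{E}_{1,2}$ is split into Gassiat's three regions. The H\"older bound is used near the diagonal and near $0$, and a first-order expansion (Corollary~\ref{cor_phi}~(2)--(3)) only away from them, since $\partial\phi$ is singular there.
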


\begin{proof} We split the analysis in the two terms~\eqref{def_E1} and~\eqref{def_E2}.

\noindent {\bf $\bullet$ Analysis of $\mathcal{E}_1$.}
The term $\mathcal{E}_1$ can be analysed in the same way as~\cite[Theorem 2.1]{Gassiat}, but with some modifications to get symmetrized estimates on the covariance (see Lemma~\ref{lem:covariance}).  We  write
\begin{align*}
  \mathcal{E}_1&=\mathcal{E}_{1,1}+\mathcal{E}_{1,2}\text{ with}\\
  \mathcal{E}_{1,1}&=\int_0^T \int_0^{t}\E[f(X_{s}) ff'(X_{t})] (\D_s X_{t}-\D_s X_{\eta(t)}) ds dt
  \\
  \mathcal{E}_{1,2}&=\int_0^T \int_0^{\eta(t)} \E\left[\left(f(X_{s}) ff'(X_{t}) - f(X_{\eta(s)}) ff'(X_{\eta(t)})\right)\right] \D_s X_{\eta(t)} ds dt
\end{align*}
where we use for $\mathcal{E}_{1,2}$ that $\D_sX_{\eta(t)}=0$ for $s>\eta(t)$. In fact, we  get from~\eqref{Y_cal}: $$\D_sX_t=\D_s\cY_t=\mathbf{1}_{s<t}\sigma \sum_{i=1}^\infty \kappa_2^{i-1} \frac{(t-s)^{i\alpha -1}}{\Gamma(i \alpha)}, $$
which is deterministic. 

We first focus on $\mathcal{E}_{1,1}$. We note 
\begin{equation}\label{def_phi}
  \phi(s,t)=\E[f(X_s)ff'(X_t)],  \ s,t\ge 0,
\end{equation}
and have 
$\cE_{1,1}=\cE'_{1,1}+\cE''_{1,1}$
with 
\begin{align*}
  \cE'_{1,1}&=\int_0^T \int_0^{t} (\phi(s,t)-\phi(t,t)) (\D_s X_{t}-\D_s X_{\eta(t)}) ds dt \\
  \cE''_{1,1}&=\int_0^T \phi(t,t) \int_0^{t} (\D_s X_{t}-\D_s X_{\eta(t)}) ds dt.
\end{align*}
By Corollary~\ref{cor_phi}~(1), we have
\begin{align*}
  |\phi(t,t)-\phi(s,t)|&\le C |t-s|^{2\alpha-1}  .
\end{align*}
From the formula of $\D_sX_t$, we get by the triangular inequality
\begin{align*}
  |\cE'_{1,1}|\le C \sum_{i=1}^\infty  \frac{|\kappa_2^{i-1}|}{\Gamma(i \alpha)} \int_0^T  \Bigg(& \int_0^{\eta(t)} (t-s)^{2\alpha-1} \left|(t-s)^{i\alpha-1}-(\eta(t)-s)^{i\alpha-1}\right| ds \\&+\int_{\eta(t)}^t (t-s)^{2\alpha-1} (t-s)^{i\alpha-1}  \Bigg)dt .
\end{align*}
For $i\ge 2$, we have $i\alpha-1>0$ and the term in parentheses is equal to
\begin{align*}
  &\int_0^{t} (t-s)^{(2+i)\alpha-2}ds -\int_0^{\eta(t)} (t-s)^{2\alpha-1}(\eta(t)-s)^{i \alpha-1}ds \\
  &\le \frac{1}{(2+i)\alpha-1}(t^{(2+i)\alpha-1}-\eta(t))^{(2+i)\alpha-1}) \le  T^{(2+i)\alpha-2} \frac{T}{n}.
\end{align*} 
For $i=1$, the same term is equal to
\begin{align*}
  &\int_0^{\eta(t)} (t-s)^{2\alpha-1}(\eta(t)-s)^{ \alpha-1}ds -\int_0^{t} (t-s)^{3\alpha-2}ds +2\int_{\eta(t)}^{t} (t-s)^{3\alpha-2}ds \\
  &=t^{2\alpha-1}\left(  \eta(t)^{\alpha}{_2F_1}(1-2\alpha,1,1+\alpha,\frac{\eta(t)}{t})- t^\alpha {_2F_1}(1-2\alpha,1,1+\alpha,1) \right) + 2\int_{\eta(t)}^{t} (t-s)^{3\alpha-2}ds.
\end{align*}
By elementary calculations, we get $\int_0^T\int_{\eta(t)}^{t} (t-s)^{3\alpha-2}ds= O(\frac{1}{n^{3\alpha-1}}) =O(\vn)$.  Now, we write the term in parenthesis as the sum
$$ \eta(t)^{\alpha}\left({_2F_1}(1-2\alpha,1,1+\alpha,\frac{\eta(t)}{t})-{_2F_1}(1-2\alpha,1,1+\alpha,1)\right) +(\eta(t)^\alpha- t^\alpha) {_2F_1}(1-2\alpha,1,1+\alpha,1).$$
We have $\int_0^T t^{2\alpha-1} (t^\alpha- \eta(t)^\alpha)dt\le T^{2\alpha-1} \int_0^T  (t^\alpha- \eta(t)^\alpha)dt \le C \frac{T}{n}$ by~\cite[Theorem 2.4]{DFF}, and we focus now on the first term  
$$\int_0^T t^{2\alpha-1}\eta(t)^\alpha\left({_2F_1}(1-2\alpha,1,1+\alpha,\frac{\eta(t)}{t})-{_2F_1}(1-2\alpha,1,1+\alpha,1)\right) dt,$$
which is positive since $z\mapsto {_2F_1}(1-2\alpha,1,1+\alpha,z)$ is decreasing, as the first argument is negative see~\cite[Eq. 15.6.1]{Daalhuis}. We use then $\eta(t)^\alpha\le t^\alpha$. 
The function $z\mapsto {_2F_1}(1-2\alpha,1,1+\alpha,z)$ is differentiable, and the derivative is given by $\frac{1-2\alpha}{1+\alpha} {_2F_1}(2-2\alpha,2,2+\alpha,z)$, see~\cite[Eq. 15.5.1]{Daalhuis}. It is bounded uniformly for $z\in [0,1]$ if $3\alpha-2> 0$, see Equation~\eqref{max_confluent}, and then the term  is upper bounded by $CT/n$. Otherwise, 
it is upper bounded, up to a constant by $C(1-z)^{3\alpha-2}$ (see~\cite[Eq. 15.8.1]{Daalhuis}), and the same term is then  upper bounded by
$$ \int_0^T t^{3\alpha-1} \int_{\frac{\eta(t)}t}^1C(1-z)^{3\alpha-2} dz  dt=\frac{C}{3\alpha-1}\int_0^T (t^{3\alpha-1}-\eta(t)^{3\alpha-1})dt \le C \frac{T}{n},$$
by~\cite[Theorem 2.4]{DFF}. Thus, in all cases, we have $\cE'_{1,1}\le C {\bf v}_n(\alpha).$

For $\cE''_{1,1}$, we have $\int_0^t \D_s X_t-\D_sX_{\eta(t)}ds=\sigma \sum_{i=1}^\infty  \kappa_2^{i-1} \frac{t^{i\alpha}-\eta(t)^{i\alpha}}{\Gamma(i\alpha+1)}$ and thus using the boundedness of~$\phi$  we get
$$|\cE''_{1,1}|\le C\sigma \sum_{i=1}^\infty  |\kappa_2|^{i-1} \frac{1}{\Gamma(i\alpha+1)} \int_0^T(t^{i\alpha}-\eta(t)^{i\alpha})dt. $$
We have $\int_0^T(t^{i\alpha}-\eta(t)^{i\alpha})dt\le C i\alpha T^{i\alpha} \frac{T}{n}$ by~\cite[Theorem 2.4]{DFF}, which gives that $|\cE''_{1,1}|\le \frac{C}{n}$ since the series converges.

We now analyse $\cE_{1,2}=\int_0^T \int_0^{\eta(t)} (\phi(s,t)-\phi(\eta(s),\eta(t))) \D_s X_{\eta(t)} ds dt$. We proceed as in~\cite[Lemma 2.3]{Gassiat} and split the integration domain as the disjunct union of
\begin{align*}
  \mathbf{D}_1&:=\{(s,t)\in (0,T)^2: s\le \eta(t),\ t-s\le \frac{2T}{n} , \ s\ge \frac{2T}{n} \}\\
  \mathbf{D}_2&:=\{(s,t)\in (0,T)^2: s\le \eta(t),\ t-s> \frac{2T}{n} , \ s\ge \frac{2T}{n} \}\\
  \mathbf{D}_3&:=\{(s,t)\in (0,T)^2: s\le \eta(t), \ s< \frac{2T}{n} \},
\end{align*}
and we note $\cE_{1,2,1}$, $\cE_{1,2,2}$ and $\cE_{1,2,3}$ the corresponding terms. From Corollary~\ref{cor_phi}~(1) and~\eqref{der_Malliavin_X}, we get
\begin{align*}
  |\cE_{1,2,1}|&\le C  \left(\frac Tn \right)^{2\alpha-1}\int_0^T \int_{\frac{2T}{n}\vee\left(t-\frac{2T}{n}\right)}^{\eta(t)} \sigma \sum_{i=1}^\infty |\kappa_2|^{i-1}  \frac{(\eta(t)-s)^{i\alpha -1}}{\Gamma(i \alpha)} ds dt \\
  &\le C  \left(\frac Tn \right)^{2\alpha-1} \sigma \sum_{i=1}^\infty |\kappa_2|^{i-1}   \int_0^T \frac{(\eta(t)-(t-2T/n))^{i\alpha}}{\Gamma(i \alpha+1)} dt \\
  &\le C  \left(\frac Tn \right)^{2\alpha-1} \sigma \sum_{i=1}^\infty |\kappa_2|^{i-1}   T \frac{(2T/n)^{i\alpha}}{\Gamma(i \alpha+1)} = O\left( \left(\frac Tn \right)^{3\alpha-1}\right),
\end{align*}
where we used $0\le \eta(t)-(t-2T/n)\le 2T/n$ for the last inequality.

The second term is equal to
\begin{align*}
  \cE_{1,2,2}=\int_0^T \int_{\frac{2T}{n}}^{t-\frac{2T}{n}} \D_sX_{\eta(t)} \left( \int_{\eta(s)}^s \partial_1 \phi(u,t) du +\int_{\eta(t)}^t \partial_2 \phi(\eta(s),v) dv\right)dsdt. 
\end{align*}
By Corollary~\ref{cor_phi}~(2) and~(3), we get since $s\mapsto s^{2\alpha-2}$ is decreasing
\begin{align*}
  |\cE_{1,2,2}|&\le C \frac{T}{n} \int_0^T \int_{\frac{2T}{n}}^{t-\frac{2T}{n}} |\D_sX_{\eta(t)}| \left( \eta(s)^{2\alpha-2}+(t-s)^ {2\alpha-2}+\eta(t)^{2\alpha-2}+(\eta(t)-\eta(s))^{2\alpha-2} \right)dsdt \\
  &\le C \frac{T}{n} \int_0^T \int_{\frac{2T}{n}}^{t-\frac{2T}{n}} |\D_sX_{\eta(t)}|  \left(s^{2\alpha-2}+(t-s)^ {2\alpha-2}\right)dsdt.
\end{align*}
For the last inequality, we used that $\eta(t)-\eta(s)\ge (t-s)/2$ and $\eta(s)\ge s/2$ since $t-s\ge \frac{2T}{n}$ and $s\ge \frac{2T}{n}$. From~\eqref{der_Malliavin_X}, we have $|\D_sX_\eta(t)|\le C(t-s)^{\alpha-1} +C$ since $(\eta(t)-s)\ge (t-s)/2$. Therefore,
\begin{align*}
  |\cE_{1,2,2}|&\le C \frac T n \int_0^T \left( t^{2\alpha-1}+ \int_0^{t} s^{2\alpha-2} (t-s)^{\alpha-1} ds + \int_{\frac{2T}{n}}^{t-\frac{2T}{n}} s^{3\alpha-3}  ds \right)dt.
\end{align*}
Notice that $\int_0^{t} s^{2\alpha-2} (t-s)^{\alpha-1} ds =\frac{\Gamma(2\alpha-1)\Gamma(\alpha)}{\Gamma(3\alpha-1)} t^{3\alpha-1}$. Thus the two first terms are in $O(T/n)$, and the third one is of order $\frac{T}{n}\int_{\frac{2T}{n}}^{t-\frac{2T}{n}} s^{3\alpha-3}  ds =O({\bf v}_n(\alpha))$ using standard calculations.

Using again Corollary~\ref{cor_phi}~(1), we bound the last term by
\begin{align*}
  |\cE_{1,2,3}|&\le C \left(\frac T n \right)^{2\alpha -1}\int_{\frac Tn}^T \int_0^{\eta(t)\wedge \frac{2T}{n}} (\eta(t)-s)^{\alpha-1} + 1 ds dt \\
  &\le C  \left(\frac T n \right)^{2\alpha -1} \left( \frac{2T^2}{n} + \int_{\frac Tn}^{\frac{2T}{n}} \left(\frac T n \right)^{\alpha} dt +  \int_{\frac{2T}{n}}^T \eta(t)^\alpha-(\eta(t)-\frac{2T}{n})^{\alpha}dt \right)=O\left( \left(\frac Tn \right)^{3\alpha-1}\right),
\end{align*}
where we used the $\alpha$-H\"older property of $t\mapsto t^\alpha$. Gathering all the terms, we get
\begin{equation*}
  \mathcal{E}_{1,2}=\mathcal{E}_{1,2,1}+\mathcal{E}_{1,2,2}+\mathcal{E}_{1,2,3}=O({\bf v}_n(\alpha)).
\end{equation*}

\noindent {\bf $\bullet$ Analysis of $\mathcal{E}_2$.} From~\eqref{def_E2}, we have $\mathcal{E}_2=\mathcal{E}_{2,1}+\mathcal{E}_{2,2}$ with 
\begin{align*} 
  \mathcal{E}_{2,1}&= \int_0^T \int_0^{\eta(t)} \left( \E[f(X_{\eta(s)}) ff'(X_{\eta(t)})] -\E[f(\xcn_{\eta(s)}) ff'(\xcn_{\eta(t)})] \right)\D_s \xcn_{\eta(t)}   ds dt\\
  \mathcal{E}_{2,2}&=\int_0^T \int_0^{\eta(t)} \E[f(X_{\eta(s)}) ff'(X_{\eta(t)})] (\D_s X_{\eta(t)}- \D_s \xcn_{\eta(t)})   ds dt.
\end{align*}

We first focus on $\mathcal{E}_{2,1}$. We have by using Corollary~\ref{cor_weak_ito}:%with Proposition~\ref{prop_OUexplicit} and Remark~\ref{rk_unifbound} for the uniform bound on the two first moments,
\begin{align*}
 | \mathcal{E}_{2,1} |\le &C \int_0^T \int_0^{\eta(t)} \Bigg(|\E[X_{\eta(s)}-\xcn_{\eta(s)}]|+|\E[X_{\eta(t)}-\xcn_{\eta(t)}]|+|Var(X_{\eta(s)})-Var(\xcn_{\eta(s)})| \\
  &+|Var(X_{\eta(t)})-Var(\xcn_{\eta(t)})| + |Cov(X_{\eta(s)},X_{\eta(t)})-Cov(\xcn_{\eta(s)},\xcn_{\eta(t)})|\Bigg)\D_s \xcn_{\eta(t)}ds dt.
\end{align*}
By using Theorem~\ref{thm_covariance_error} and using that $\E[X_{t_0}-\xcn_{t_0}]=Var(X_{t_0})=Var(\xcn_{t_0})=Cov(X_{t_0},X_{t_k})=Cov(\xcn_{t_0},\xcn_{t_k})=0$, we get
\begin{align}
  |\mathcal{E}_{2,1}| &\le 
   C \int_{t_1}^T \int_{t_1}^{\eta(t)} \Bigg((1+\eta(s)^{\alpha-1}+\eta(t)^{\alpha-1}){\bf v}_n(\alpha) + \frac{\eta(s)^{2\alpha-2}+\eta(t)^{2\alpha-2}}{n} \Bigg)\D_s \xcn_{\eta(t)}ds dt \notag \\
   &\le 
   C \int_{t_1}^T \int_{t_1}^{\eta(t)} \Bigg((1+\eta(s)^{\alpha-1}){\bf v}_n(\alpha) + \frac{\eta(s)^{2\alpha-2}}{n} \Bigg)\D_s \xcn_{\eta(t)}ds dt \label{eq:boundE21},
\end{align}
since $\alpha-1<0$. By~\eqref{eq_dxcn}, we get for $k>1$ and $\beta\in(-1,0]$
$$\int_{t_1}^{t_k} \eta(s)^\beta |\D_s \xcn_{t_k}|ds\le |\kappa_2|  \sum_{i=1}^{k-1} c_{i,k} \int_{t_1}^{t_k} \eta(s)^\beta |\D_s \xcn_{t_i}|ds + \sigma \int_{t_1}^{t_k} \eta(s)^\beta \frac{(t_k-s)^{\alpha-1}}{\Gamma(\alpha)} ds$$
with $c_{i,k}=\int_{t_i}^{t_{i+1}}\frac{(t_k-u)^{\alpha-1}}{\Gamma(\alpha)}du$. We observe that for $s\ge t_1$, $\eta(s)\ge s/2$ and therefore the last integral is upper bounded by
$$ 2^{-\beta}\int_0^{t_k}s^{\beta}\frac{(t_k-s)^{\alpha-1}}{\Gamma(\alpha)} ds=2^{-\beta} \frac{\Gamma(\beta+1)}{\Gamma(\alpha+\beta+1)} t_k^{\alpha+\beta}\le C_{\alpha,\beta}\left(\mathbf{1}_{\alpha+\beta\ge 0} T^{\alpha+\beta}+\mathbf{1}_{\alpha+\beta< 0} t_1^{\alpha+\beta} \right). $$
We now apply the Gronwall type lemma~\cite[Lemma 3.4]{LiZe} to get 
\begin{equation}\label{majo_der_malliavin_sch}
  \int_{t_1}^{t_k} \eta(s)^\beta |\D_s \xcn_{t_k}|ds\le C \left(\mathbf{1}_{\alpha+\beta\ge 0} T^{\alpha+\beta}+\mathbf{1}_{\alpha+\beta< 0} t_1^{\alpha+\beta} \right), \ 1\le k\le n. 
\end{equation}
We apply this bound in~\eqref{eq:boundE21} with $\beta=0$, $\beta=\alpha-1$ and $\beta=2\alpha-2$ and get
$$  |\mathcal{E}_{2,1} | \le 
C T  \left( \left(C(T^\beta+T^{2\alpha-1})\right){\bf v}_n(\alpha) +  \frac{\mathbf{1}_{\alpha\ge 2/3}T^{3\alpha-2}+\mathbf{1}_{\alpha< 2/3}t_1^{3\alpha-2} }{n}\right).  $$
 Note that $\frac {t_1^{3\alpha-2}}{n}=\frac{T^{3\alpha-2}}{n^{3\alpha-1} }\le  T^{3\alpha-2} {\bf v}_n(\alpha)$, which gives 
 $$ |\mathcal{E}_{2,1}|\le C \vn .$$
% Therefore, 
% \begin{align*}
%   \mathcal{E}_{2,1} &\le C \int_{t_1}^T \left((1+\eta(t)^{\alpha-1}){\bf v}_n(\alpha) +\frac{\eta(t)^{2\alpha-2}}{n}\right) dt \\
% &\le C \int_{0}^T \left((1+t^{\alpha-1}){\bf v}_n(\alpha) +\frac{t^{2\alpha-2}}{n}\right) dt= C\left(T+\frac{T^\alpha}{\alpha}\right){\bf v}_n(\alpha) + C\frac{T^{2\alpha-1}}{(2\alpha-1)n} ,
% \end{align*}
% where we have used that $\eta(t)\ge t/2$ on $t\ge t_1$ for the last inequality. 

Last, we focus on  $\mathcal{E}_{2,2}$.
 We have 
$$\mathcal{E}_{2,2}=\frac{T}{n} \sum_{k=1}^{n-1} \int_0^{t_k} \E[f(X_{\eta(s)}) ff'(X_{t_k})](\D_sX_{t_k}-\D_s \xcn_{t_k})ds,$$
and we focus on each integral. Let $\phi$ be defined by~\eqref{def_phi}. By Corollary~\ref{cor_phi}, it is continuously differentiable and we write $\phi(\eta(s),t_k)=\phi(t_k,t_k)-\int_{\eta(s)}^{t_k} \partial_1 \phi(u,t_k)du$. This gives
\begin{align*}
  &\int_0^{t_k} \E[f(X_{\eta(s)}) ff'(X_{t_k})] (\D_sX_{t_k}-\D_s \xcn_{t_k}) ds\\
&=\phi(t_k,t_k)\int_0^{t_k} (\D_s X_{t_k}-\D_s \xcn_{t_k})ds -  \int_0^{t_k} \int_0^{t_k}  \mathbf{1}_{\eta(s) \le u} \partial_1 \phi(u,t_k) (\D_s X_{t_k}-\D_s \xcn_{t_k}) ds du\\
&=\phi(t_k,t_k)\int_0^{t_k} (\D_s X_{t_k}-\D_s \xcn_{t_k})ds -  \int_0^{t_k} \partial_1 \phi(u,t_k) \left( \int_0^{\bar{\eta}(u)}   (\D_s X_{t_k}-\D_s \xcn_{t_k}) ds \right) du,
\end{align*}
by using Fubini theorem. 
By using the triangle inequality, $\phi$ bounded, Lemma~\ref{lem_err_Malliavin3} and Corollary~\ref{cor_phi}, we get
\begin{align*}
  &\left|\int_0^{t_k} \E[f(X_{\eta(s)}) ff'(X_{t_k})] (\D_sX_{t_k}-\D_s \xcn_{t_k}) ds\right|\\
&\le C \left( \frac{T}{n} + \int_0^{t_k} \left(u^{2\alpha-2}+(t_k-u)^{2\alpha-2}\right) \left| \int_0^{\bar{\eta}(u)}   (\D_s X_{t_k}-\D_s \xcn_{t_k}) ds \right|  du \right)\\
&\le C \frac{T}{n}  \left(1  +  t_k^{2\alpha-1} \right) \le C \frac{T}{n}  \left(1  +  T^{2\alpha-1} \right), 
\end{align*}
since $2\alpha-1>0$.  Therefore, we get $|\mathcal{E}_{2,2}|\le  CT(1+T^{2\alpha-1})\frac{T}{n}$, which completes the proof.
\end{proof}

\section{Weak error approximation with polynomial test functions}\label{Sec_polynomial}
 
 We now state our main result on the weak convergence rate for any polynomial test function. To get this result, we build up on the framework developed in Friz et al.~\cite{FSW} that we extend to our setting. In contrast with the cubic case with $b=0$ considered in Section~\ref{Sec_cubic}, it is worth noting that the analysis of some terms is more challenging for this general context. In particular the term $\mathcal{E}_{2,2}$ requires a much more involved analysis when studied with general polynomial test functions.       

\begin{theorem}\label{thm_main}
   Let~$L$ be the process defined by~\eqref{SVE_OU} with $b,f\in  \mathcal{C}_{\exp}^\infty(\R, \R)$.
    Let $\check{L}^n$ denote the approximation scheme~\eqref{eq:scheme_price} with time step $T/n$ with $n\ge 1$.  Then, for any polynomial function $\Phi:\R \to \R$, there exists $C \in \R_+$ such that for all $n$,   
$$\left|\E[\Phi(\check{L}^n_T)]-\E[\Phi(L_T)] \right| \le C \vn .$$
\end{theorem}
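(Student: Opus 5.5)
By linearity it suffices to treat monomials $\Phi(x)=x^N$. We proceed by induction on $N$, following the iterated-integral framework of Friz et al.~\cite{FSW} and adapting it to the discretised volatility.

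\textbf{Expansion by Itô and Clark--Ocone.} Write $L_T-L_0=\int_0^T b(X_t)dt+\int_0^T f(X_t)dB_t$, and similarly for $\check L^n_T$ with $X_t$ replaced by $\xcn_{\eta(t)}$. Applying Itô's formula to $(L_t)^N$ and taking expectations produces terms $\E[\int_0^T (L_t)^{N-1} b(X_t)dt]$ and $\E[\int_0^T (L_t)^{N-2} f^2(X_t)dt]$. In each of them we expand $(L_t)^{m}$ again by Itô's formula. Whenever a stochastic integral $\int_0^t g(X_s)dB_s$ multiplies a functional of $X$, we remove it with the Clark--Ocone/duality formula; this works because $\D_sX_t$ and $\D_s\xcn_{t_k}$ are deterministic by Proposition~\ref{prop_OUexplicit} and Lemma~\ref{lem_err_Malliavin3}.

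Iterating this, $\E[(L_T)^N]$ becomes a finite sum of terms of the form
$$\int_{\Delta}\E\Big[\prod_{j} g_j(X_{s_j})\Big]\prod_{(i,j)\in\mathcal{A}} \D_{s_i}X_{s_j}\,ds,$$
where $\Delta$ is a simplex, each $g_j$ is a product of derivatives of $b$ and $f$ (hence lies in $\mathcal{C}^\infty_{\exp}$), and $\mathcal{A}$ is a forest of ``Malliavin arrows'', as in the cubic case. The scheme gives the same combinatorial sum, with every $X_{s_j}$ replaced by $\xcn_{\eta(s_j)}$, every $\D_{s_i}X_{s_j}$ replaced by $\D_{s_i}\xcn_{\eta(s_j)}$, and the constraint $s_i<\eta(s_j)$. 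An induction on the number of integrals organises these sums, as in~\cite{FSW}.

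\textbf{Splitting the error.} For each term we split the error as in Section~\ref{Sec_cubic}: $\cE=\cE_1+\cE_2$.
\begin{itemize}
\item $\cE_1$ compares $X$ at continuous times with $X$ at grid times (both the functions and the kernels). It is handled as in~\cite{FSW}, generalising $\cE_{1,1}$ and $\cE_{1,2}$. The tools are Hölder and derivative estimates for multi-time functions $\phi(s_1,\dots,s_m)=\E[\prod_j g_j(X_{s_j})]$ that extend Corollary~\ref{cor_phi} (increments of order $|t-s|^{2\alpha-1}$, derivatives bounded by $s^{2\alpha-2}+|s_i-s_j|^{2\alpha-2}$), together with the kernel-difference computations through ${}_2F_1$ and \cite[Theorem 2.4]{DFF}. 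The domain is split near the diagonal and near zero, which gives the $O(\vn)$ contributions.
\item $\cE_2$ compares $X$ with $\xcn$ at grid times, and splits as $\cE_{2,1}+\cE_{2,2}$.
\end{itemize}

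\textbf{The term $\cE_{2,1}$ (law error).} Since $(X_{t_k})$ and $(\xcn_{t_k})$ are Gaussian vectors, a multivariate version of Lemma~\ref{lem_weak_ito}/Corollary~\ref{cor_weak_ito} interpolates between their means and covariances. This bounds the error in $\E[\prod_j g_j]$ by mean and covariance errors, which Theorem~\ref{thm_covariance_error} controls by $(1+\sum_j\eta(s_j)^{\alpha-1})\vn+\sum_j\eta(s_j)^{2\alpha-2}/n$. These weights are then integrated against products of $|\D\xcn|$ using~\eqref{majo_der_malliavin_sch} iteratively along the forest, with the Gronwall-type lemma~\cite[Lemma 3.4]{LiZe}.

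\textbf{The term $\cE_{2,2}$ (Malliavin kernel error), the main obstacle.} Here several kernels $\D_{s_i}X_{s_j}$ are replaced by $\D_{s_i}\xcn_{\eta(s_j)}$. We telescope so that a single kernel difference appears at a time, with the other kernels of either type. If the product has only one arrow, we proceed as for the cubic $\cE_{2,2}$: write $\phi$ at $\eta(s_i)$ as $\phi$ at the endpoint minus an integral of $\partial_i\phi$, apply Fubini, and use the integrated bound of Lemma~\ref{lem_err_Malliavin3} (rate $1/n$).

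In general the difference kernel is multiplied by other singular kernels in the same variable $s_i$ (for example a vertex with several outgoing arrows), so Fubini leaves a weighted integral of the difference, not a plain one. My plan is to integrate by parts in $s_i$ against the remaining smooth-in-$s_i$ factor $\partial_i\phi\cdot\prod(\text{other kernels})$. Where that factor is singular, I would fall back on Lemma~\ref{lem_err_Malliavin_int}, whose $L^1$ bound is $n^{-\alpha}$. This bound alone is insufficient. It would have to be combined with the extra $(T/n)^{\alpha-1}$-type gain coming from the small intervals $[\eta(s),s]$, or with the Hölder regularity in $s_i$ of the other kernels, to recover $1/n$ or $\vn$. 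This balancing is where I expect the real difficulty to be.
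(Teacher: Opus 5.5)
Your representation step and your treatment of $\cE_1$ and $\cE_{2,1}$ match the paper: an adaptation of \cite{FSW} with Proposition~\ref{prop_phi} for $\cE_1$, and Corollary~\ref{cor_weak_ito}, Theorem~\ref{thm_covariance_error} and \eqref{majo_der_malliavin_sch} for $\cE_{2,1}$. The gap is $\cE_{2,2}$, which you leave open, and the plan you sketch would not close it as stated.

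In the paper's notation the kernels are $\prod_{i\ge 2}\D_{r_i}X_{r_{\alpha(i)}}$. You want to integrate by parts, in the variable $r_j$ of the difference kernel $\D_{r_j}X_{\eta(r_{\alpha(j)})}-\D_{r_j}\xcn_{\eta(r_{\alpha(j)})}$, against the other factors. Those factors are not regular in $r_j$. They contain the kernels $\D_{r_i}X_{r_j}$ with $i>j$, $\alpha(i)=j$, which are singular at $r_i=r_j$. If these kernels and $\phi$ are kept frozen or taken from the scheme, they depend on $r_j$ through $\eta(r_j)$ and jump at every grid point, so the integration by parts you describe is not directly available. Your fallback, Lemma~\ref{lem_err_Malliavin_int}, gives only $n^{-\alpha}$. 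As you note, this is not $O(\vn)$, and the proposal does not say where the missing factor comes from.

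The paper closes this with two ingredients you lack.

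\textbf{Asymmetric telescoping.} The decomposition \eqref{decompo_E22} is chosen so that the two sides play different roles:
\begin{itemize}
\item for $i<j$ it keeps the scheme kernels $\D_{r_i}\xcn_{\eta(r_{\alpha(i)})}$, which do not involve $r_j$ and only need the uniform $L^1$ bound of Lemma~\ref{lem_err_Malliavin3};
\item for $i>j$ it uses the exact, \emph{unfrozen} kernels $\D_{r_i}X_{r_{\alpha(i)}}$, together with $\E[\Psi(X_{r_1},\dots,X_{r_m})]$.
\end{itemize}
The cost of unfreezing is $\mathcal{R}^1_{2,2}+\mathcal{R}^2_{2,2}$. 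Each of these carries the difference kernel in $r_j$ \emph{and} an independent freezing error in other variables. So the $n^{-\alpha}$ of Lemma~\ref{lem_err_Malliavin_int} is multiplied either by $n^{-\alpha}$ from \eqref{bound_integ} or by $n^{1-2\alpha}$ from Proposition~\ref{prop_phi}~(1). This gives $O(n^{-2\alpha}+n^{1-3\alpha})=O(\vn)$. It is this product of two small factors in different variables that rescues the $n^{-\alpha}$ bound, not a further gain in $r_j$.

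\textbf{Integrate out before integrating by parts.} In the remaining main term, the paper integrates out $r_{j+1},\dots,r_m$ first, forming $G_j(r_1,\dots,r_j)$. This absorbs the singular kernels pointing into $r_j$. Proposition~\ref{prop_E22} then gives $\int_0^{r_{j-1}}|\partial_jG_j|\,dr_j\le C$ uniformly in $n$. Its proof differentiates under the integral, uses Proposition~\ref{prop_phi}~(2)--(3), and rescales $r_{j+1}=u_{j+1}r_j,\dots$ along chains with $\alpha(k)=k-1$. The integration by parts \eqref{IPP} is then done against the integrated bound $|\int_0^{t_\ell}(\D_sX_{t_k}-\D_s\xcn_{t_k})\,ds|\le CT/n$ of Lemma~\ref{lem_err_Malliavin3}. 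This yields $|\cE_{2,2,j}|\le C/n$.
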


The proof of this theorem is the goal of Section~\ref{Sec_polynomial}. It is split into two main steps. The first one consists in representing the weak error by the mean of a sum of iterated integrals coming from the Malliavin duality formula. This is made in Subsection~\ref{Subsec_Repr}. In Subsection~\ref{Subsec_weakerror}, we analyse the weak error itself.  Guided by the cubic case, we decompose the error in a similar way and provide the analysis of each term. It turns out that the analysis of some terms is much involved than for the cubic case since we have, due to the combinatorics generated by higher polynomial degrees, more different terms to analyse.

\subsection{Representation of the weak error}\label{Subsec_Repr} We adopt and extend the framework introduced by Friz et al.~\cite{FSW} to analyse the weak error for polynomial test functions. Without loss of generality as $x\mapsto (L_0+x)^N$ is still a polynomial function, we assume that $L_0=0$ in Section~\ref{Sec_polynomial}.
We introduce, for $m\in \N$, 
$$\Del_m:= \{(r_1,\dots,r_m): 0<r_m<\dots<r_1<T \},$$
the open simplex of order~$m$. 
We recall that $b,f\in \mathcal{C}_{\exp}^\infty(\R, \R)$. Let $\mathcal{Q}^m$  be the set of functions $F:\R^m \times \Del_m \to \R$ such that for any $\rb=(r_1,\dots,r_m)\in \Del_m$, $F(\cdot,\rb)$ is a smooth function on~$\R^m$ with derivatives of all orders with exponential growth. For $N\ge 1$, we define the three following operators $\mathcal{I}^N,\mathcal{J}^N,\mathcal{K}^N: \mathcal{Q}^m\to \mathcal{Q}^{m+1}$ by
\begin{align}
  &(\mathcal{I}^N F)(x_1,\dots,x_m,y,r_1,\dots,r_m,s)=\rho N f(y) \sum_{j=1}^m \partial_{x_j}F(x_1,\dots,x_m,r_1,\dots,r_m) D_s X_{r_j}, \label{def_I}\\
  &(\mathcal{J}^N F)(x_1,\dots,x_m,y,r_1,\dots,r_m,s)=\frac{N(N-1)}2 f^2(y)F(x_1,\dots,x_m,r_1,\dots,r_m) \label{def_J}\\
  &(\mathcal{K}^N F)(x_1,\dots,x_m,y,r_1,\dots,r_m,s)=N b(y)F(x_1,\dots,x_m,r_1,\dots,r_m) \label{def_K},
\end{align}
where $\rb=(r_1,\dots,r_m)\in \Del_m$, $x\in \R^m$, $s\in(0,r_m)$ and $y\in \R$. Recall that $D_sX_{r_j}$ is the (deterministic) Malliavin derivative given by Proposition~\ref{prop_OUexplicit}.

\begin{lemma}\label{lem_recXN}
  Let $N\ge 1$. Let $m\ge 1$, $F\in \mathcal{Q}^m$, $\rb \in \Del_m$, $t\in[0,r_m)$ and denote ${\bf X}_{\rb}=(X_{r_1},\dots,X_{r_m})$. We have, for $N\ge 1$
\begin{align*}
  \E\left[ (L_t)^N F({\bf X}_{\rb},\rb)\right]=&\int_0^t  \E\left[ (L_s)^{N-1} (\mathcal{I}^N F)({\bf X}_{\rb},X_s,\rb,s) \right] ds  \\
  &+\int_0^t  \E\left[ (L_s)^{N-1} (\mathcal{K}^N F)({\bf X}_{\rb},X_s,\rb,s) \right] ds \\
  & +\int_0^t  \E\left[ (L_s)^{N-2} (\mathcal{J}^N F)({\bf X}_{\rb},X_s,\rb,s) \right] ds,
\end{align*}
where the last integral is meant to be $0$ for $N=1$.
\end{lemma}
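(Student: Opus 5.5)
The plan is to apply Itô's formula to $(L_s)^N$ on $[0,t]$, multiply by the $\mathcal{F}_{r_m}$-measurable random variable $F({\bf X}_{\rb},\rb)$, take expectations, and then remove the stochastic integral term with the Malliavin duality (Clark--Ocone type) formula. Since $dL_s=b(X_s)ds+f(X_s)dB_s$ with $L_0=0$, Itô's formula gives
\begin{align*}
(L_t)^N=\int_0^t N (L_s)^{N-1} b(X_s) ds+\int_0^t N (L_s)^{N-1} f(X_s)dB_s+\int_0^t \frac{N(N-1)}{2}(L_s)^{N-2}f^2(X_s)ds .
\end{align*}
Multiplying by $F({\bf X}_{\rb},\rb)$ and taking expectations, the first and third terms yield, by Fubini's theorem, exactly the $\mathcal{K}^N$ and $\mathcal{J}^N$ integrals, by the definitions \eqref{def_K} and \eqref{def_J}. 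Integrability comes from the exponential growth of $b,f,F$ together with the Gaussianity of $X$ and the moment bounds on $L$ (as in the lemma on finiteness of $\E[\Phi(L_T)]$), via Hölder's inequality.

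The main step is the stochastic integral term $\E\bigl[F({\bf X}_{\rb},\rb)\int_0^t u_s\,dB_s\bigr]$ with $u_s=N(L_s)^{N-1}f(X_s)$. Here $u$ is adapted and in $L^2(\Omega\times[0,t])$, so the Itô integral coincides with the Skorokhod integral $\delta(u\mathbf 1_{[0,t]})$ with respect to the two-dimensional Brownian motion $(W,W^\bot)$, where $dB=\rho\,dW+\sqrt{1-\rho^2}\,dW^\bot$. The duality formula then gives
\begin{align*}
\E\Big[F({\bf X}_{\rb},\rb)\int_0^t u_s dB_s\Big]=\int_0^t \E\Big[u_s\big(\rho \D^W_s+\sqrt{1-\rho^2}\,\D^{W^\bot}_s\big)F({\bf X}_{\rb},\rb)\Big]ds .
\end{align*}
Since ${\bf X}_{\rb}$ depends only on $W$, we have $\D^{W^\bot}F=0$. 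The chain rule (\cite[Proposition 1.3.8]{Nualart}, with $X_{r_j}\in\mathbb{D}^{1,2}$ by Proposition~\ref{prop_OUexplicit}) gives $\D^W_sF({\bf X}_{\rb},\rb)=\sum_j\partial_{x_j}F({\bf X}_{\rb},\rb)\,\D_sX_{r_j}$, where each $\D_sX_{r_j}$ is deterministic. Substituting produces exactly $\E[(L_s)^{N-1}(\mathcal{I}^NF)({\bf X}_{\rb},X_s,\rb,s)]$ by \eqref{def_I}. Note that $s<t<r_m\le r_j$, so the Malliavin derivatives $\D_sX_{r_j}$ are genuinely nonzero and finite.

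The obstacle I expect is justifying the chain rule and duality, because $F(\cdot,\rb)$ has only exponential growth and is not bounded with bounded derivatives. I would handle this by truncation: replace $F$ by $F\chi_R$, where $\chi_R$ is a smooth cutoff, apply the formula for the truncated function (which is a $\mathcal{C}^1_b$ function of a Gaussian vector in $\mathbb{D}^{1,2}$), and let $R\to\infty$ using dominated convergence. The domination comes from Gaussian exponential moments of ${\bf X}_{\rb}$ and $L^p$ bounds on $L_s$. Alternatively, one can note directly that $F({\bf X}_{\rb},\rb)\in\mathbb{D}^{1,2}$, since its candidate derivative is square integrable. The case $N=1$ needs no separate treatment: the $\mathcal{J}^N$ term vanishes because of the factor $N(N-1)$.
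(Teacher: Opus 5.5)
Your proof is correct and follows the paper's route. You apply Itô's formula to $(L_t)^N$, obtain the $\mathcal{K}^N$ and $\mathcal{J}^N$ terms by Fubini, and handle the martingale term by a Malliavin integration by parts. The paper does that last step via the Clark--Ocone representation of $F({\bf X}_{\rb},\rb)$, the covariation $d\langle B,W\rangle_s=\rho\,ds$ and the tower property, whereas you use the equivalent duality formula $\E[G\,\delta(v)]=\E[\langle \D G,v\rangle]$ directly. Your truncation argument showing $F({\bf X}_{\rb},\rb)\in\mathbb{D}^{1,2}$ under exponential growth supplies a justification that the paper leaves implicit.
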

\begin{proof}
First, we apply Itô formula to get 
$$L_t^N=\int_0^t NL_s^{N-1} f(X_s) dB_s +\int_0^t NL_s^{N-1} b(X_s) +\frac{N(N-1)}{2}L_s^{N-2} f^2(X_s) ds.$$  
From the Clark-Ocone formula, we have 
$$F({\bf X}_{\rb},\rb)=\E[F({\bf X}_{\rb},\rb)]+\sum_{j=1}^m \int_0^{r_j} \E[\partial_{x_j}F({\bf X}_{\rb},\rb)|\mathcal{F}_s] D_sX_{r_j} dW_s, $$
where $(\mathcal{F}_s,s\ge 0)$ denotes the natural filtration of $(B,W)$. This gives 
\begin{align*}
\E\left[ \left(\int_0^t NL_s^{N-1} f(X_s) dB_s\right)F({\bf X}_{\rb},\rb) \right]&=\rho \sum_{j=1}^m \int_0^{t} N \E[(L_s)^{N-1} f(X_s)\partial_{x_j}F({\bf X}_{\rb},\rb)]D_sX_{r_j} ds\\
&=\int_0^t\E\left[ (L_s)^{N-1} (\mathcal{I}^N F)({\bf X}_{\rb},X_s,\rb,s) \right] ds.
\end{align*}
The two other terms follow immediately.
\end{proof}

Lemma~\ref{lem_recXN} gives a recursive way to obtain~$\E[(L_t)^N]$. We now want to remove this recursion and get an explicit form. To do so, we use the same procedure as Friz et al.~\cite{FSW}. Let $\mathcal{W}$ be the set of all words with letters $\{I,J,K\}$. We denote by $|\cdot|$ the length of a word, i.e. $|w|=m$ for $w=w_1\dots w_m\in \mathcal{W}$. We define also $\ell:\mathcal{W}\to \N$ by 
$$\ell(w)=\sum_{j=1}^{|w|} \ell(w_j), \text{ for } w=w_1\dots w_m,$$
with $\ell(I)=\ell(K)=1$ and $\ell(J)=2$. Heuristically, $\ell$ counts the decrease of the degree of the monomial part of the expectation. To any word $w\in \cW$, we associate an application $\iota$ defined as follows:
\begin{align}\label{def_iota}
  \iota(w)=\begin{cases}
    \iota (w_1\dots w_{m-1}) \circ \mathcal{I}^{\ell(w)} \quad \text{ if } w_m=I, \\
    \iota (w_1\dots w_{m-1}) \circ \mathcal{J}^{\ell(w)} \quad \text{ if } w_m=J, \\
    \iota (w_1\dots w_{m-1}) \circ \mathcal{K}^{\ell(w)} \quad \text{ if } w_m=K, \\
  \end{cases}
\end{align}
where by convention  $\iota(w_1\dots w_{m-1})$ is the identity for $m=1$ and the operators $\mathcal{I}^{\ell(w)}$, $\mathcal{J}^{\ell(w)}$ and $\mathcal{K}^{\ell(w)}$ are respectively defined by~\eqref{def_I}, \eqref{def_J} and \eqref{def_K}. 
Since the operator $\mathcal{I}$  involves only derivatives, we get that for the constant function~$1$, $\iota(w)(1)=0$ for any word $w$ with last letter~$I$.

\begin{theorem} \label{thm_moments1}
We have, for any $N\ge 1$,
\begin{align*}
  &\E\left[ (L_t)^N \right] =\sum_{w\in \cW , \ell(w)=N} \int_0^t \dots \int_0^{r_{|w|-1}} \E\left[(\iota(w) 1)(X_{r_1},\dots,X_{r_{|w|}},  r_1,\dots, r_{|w|}) \right] dr_{|w|}\dots dr_1.
\end{align*}
\end{theorem}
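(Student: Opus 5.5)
We prove the formula by unrolling the recursion of Lemma~\ref{lem_recXN}. We establish a slightly more general statement by induction on $N$: for every $m\ge 0$, $F\in\mathcal{Q}^m$, $\rb\in\Del_m$ and $t\in[0,r_m)$ (with $r_0=T$ when $m=0$),
\begin{equation*}
\E\left[(L_t)^N F({\bf X}_{\rb},\rb)\right]=\sum_{w\in\cW,\ \ell(w)=N}\int_0^t\int_0^{s_1}\cdots\int_0^{s_{|w|-1}}\E\left[(\iota(w)F)({\bf X}_{\rb},X_{s_1},\dots,X_{s_{|w|}},\rb,s_1,\dots,s_{|w|})\right]ds_{|w|}\cdots ds_1 .
\end{equation*}
Here we use the convention $L^0\equiv 1$, and $\E[(L_t)^0F]=\E[F]$ corresponds to the empty word. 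The statement of Theorem~\ref{thm_moments1} is the case $m=0$ and $F=1$. For $m=0$ we also need Lemma~\ref{lem_recXN} with $m=0$; there $\mathcal{I}^N F$ vanishes (the sum over $j$ is empty), and the Itô formula argument goes through verbatim. One should check that the operators and $\iota$ are well defined for $m=0$; this is the case, since $\mathcal{I}^N1=0$ is consistent with the remark that $\iota(w)1=0$ whenever the last letter of $w$ is $I$.

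For the base cases $N=0$ and $N=1$: for $N=0$ the only word is the empty one and there is nothing to prove. For $N=1$, Lemma~\ref{lem_recXN} gives $\int_0^t\E[(L_s)^0(\mathcal{I}^1F+\mathcal{K}^1F)(\dots)]ds$, which matches the words $I$ and $K$, the only words with $\ell=1$.

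For the induction step with $N\ge2$, we apply Lemma~\ref{lem_recXN} to $\E[(L_t)^NF]$. This yields three integrals over $s_1\in(0,t)$, with integrands $\E[(L_{s_1})^{N-1}G({\bf X}_{\rb'},\rb')]$ for $G=\mathcal{I}^NF,\mathcal{K}^NF\in\mathcal{Q}^{m+1}$, and $\E[(L_{s_1})^{N-2}\mathcal{J}^NF]$, where $\rb'=(\rb,s_1)\in\Del_{m+1}$. Since $s_1<t<r_m$, the hypothesis $t'<r'_{m+1}$ is satisfied for every $t'<s_1$, so we may apply the induction hypothesis with $t'\to s_1$ (by continuity, or directly with time $s_1$, as Lemma~\ref{lem_recXN} applies at $t=s_1$ with the integrands as in the proof). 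This expresses each term as a sum over words $w'$ with $\ell(w')=N-1$ (respectively $N-2$) of iterated integrals of $\iota(w')(\mathcal{I}^NF)$, $\iota(w')(\mathcal{K}^NF)$ and $\iota(w')(\mathcal{J}^NF)$.

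The key combinatorial point is then to identify these with $\iota(w)F$ for words with $\ell(w)=N$. Since the recursion~\eqref{def_iota} peels off the \emph{last} letter and applies its operator innermost, we have $\iota(w')\circ\mathcal{I}^N=\iota(w'I)$ exactly when the superscript matches. The superscript attached to the last letter of $w=w'I$ is $\ell(w)=N$, as required. For the other letters of $w$, the superscript at position $j$ is $\ell(w_1\cdots w_j)$, which is independent of what follows. So $\iota(w'x)=\iota(w')\circ\mathcal{X}^{\ell(w'x)}$ is consistent with applying the induction hypothesis to $w'$, where the superscripts are $\ell(w_1\cdots w_j)$. Here $x\in\{I,J,K\}$ and $\mathcal{X}$ is the corresponding operator. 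The map $(w',x)\mapsto w'x$ is a bijection onto the set of words with $\ell=N$ (split according to the last letter, with $\ell(x)\in\{1,2\}$), so summing the three contributions gives exactly the claimed sum.

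The main obstacle is bookkeeping rather than analysis. One must verify that the operators applied from inside out carry the correct superscripts and that the order of variables in $\Del$ matches the nesting of the integrals: the newest time $s$ is the smallest, in agreement with the definition of $\Del_{m+1}$. One must also justify that all expectations are finite, using the exponential growth of the functions in $\mathcal{Q}^m$, Gaussianity of $X$, and moment bounds on $L$, so that Fubini applies to the iterated integrals.
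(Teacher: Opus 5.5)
Your proposal is correct and follows essentially the same route as the paper. You both prove the identity for a general test function $F\in\mathcal{Q}^m$ by induction on $N$ via Lemma~\ref{lem_recXN}, identify $\iota(w')\circ\mathcal{I}^N$, $\iota(w')\circ\mathcal{J}^N$, $\iota(w')\circ\mathcal{K}^N$ with $\iota(w)$ for the words $\ell(w)=N$ split by their last letter, and then take $F\equiv 1$; your extra care about the empty word (needed for the $\mathcal{J}$-term when $N=2$), the case $m=0$, and the endpoint $t'=s_1$ fills in details the paper leaves implicit.
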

\begin{proof} We follow~\cite{FSW} and prove the following more general result: for  $m\ge 1$, ${\bf u}\in \Del_m$, $t<u_m$ and $F\in \mathcal{Q}^m$, we have
  \begin{align*}
    &\E\left[ (L_t)^N F({\bf X}_{\bf u},{\bf u})\right] \\&=\sum_{w\in \cW , \ell(w)=N} \int_0^t \dots \int_0^{r_{|w|-1}} \E\left[(\iota(w) F)({\bf X}_{\bf u},X_{r_1},\dots,X_{r_{|w|}}, {\bf u}, r_1,\dots, r_{|w|}) \right] dr_{|w|}\dots dr_1.
  \end{align*}
 We show this result by induction on~$N$. For $N=1$, this is given by Lemma~\ref{lem_recXN}. For $N\ge 2$, we apply again Lemma~\ref{lem_recXN} and the induction hypothesis as in the proof of~\cite[Proposition 3.8]{FSW}, using that for $w\in \mathcal{W}$ such that $\ell(w)=N$, either $\iota(w)F=\iota(w_1\dots w_{|w|-1}) \mathcal{I}^N F$ or $\iota(w)F=\iota(w_1\dots w_{|w|-1}) \mathcal{J}^N F$ or $\iota(w)F=\iota(w_1\dots w_{|w|-1}) \mathcal{K}^N F$. We conclude by taking~$F \equiv 1$.
\end{proof}

We now want a similar formula for the approximation scheme~\eqref{eq:scheme_price}. For $N\ge 1$, we define the following operator $\check{\mathcal{I}}^N: \mathcal{Q}^m\to \mathcal{Q}^{m+1}$ by
\begin{align}
  &(\check{\mathcal{I}}^N F)(x_1,\dots,x_m,y,r_1,\dots,r_m,s)=\rho N f(y) \sum_{j=1}^m \partial_{x_j}F(x_1,\dots,x_m,r_1,\dots,r_m) D_s \xcn_{\eta(r_j)}, \label{def_Icheck}
  \end{align}
where $\rb=(r_1,\dots,r_m)\in \Del_m$, $x\in \R^m$, $s\in(0,r_m)$ and $y\in \R$. Recall that $D_s \xcn_{r_j}$ is the (deterministic) Malliavin derivative given by Lemma~\ref{lem_err_Malliavin3}. We then define for any word $w\in \mathcal{W}$ with $w=w_1\dots w_m$ the application 
\begin{align*}%\label{def_iotacheck}
  \check{\iota}(w)=\begin{cases}
    \check{\iota}(w_1\dots w_{m-1}) \circ \check{\mathcal{I}}^{\ell(w)} \quad \text{ if } w_m=I, \\
    \check{\iota} (w_1\dots w_{m-1}) \circ \mathcal{J}^{\ell(w)} \quad \text{ if } w_m=J, \\
    \check{\iota} (w_1\dots w_{m-1}) \circ \mathcal{K}^{\ell(w)} \quad \text{ if } w_m=K, \\
  \end{cases}
\end{align*}
where again by convention  $\check{\iota}(w_1\dots w_{m-1})$ is the identity for $m=1$ and the operators $\check{\mathcal{I}}^{\ell(w)}$, $\mathcal{J}^{\ell(w)}$ and $\mathcal{K}^{\ell(w)}$ are respectively defined by~\eqref{def_Icheck}, \eqref{def_J} and \eqref{def_K}. 
\begin{theorem} \label{thm_moments2}
  We have, for any $N\ge 1$,
  \begin{align*}
    &\E\left[ (\lcn_t)^N \right] =\sum_{w\in \cW , \ell(w)=N} \int_0^t \dots \int_0^{r_{|w|-1}} \E\left[(\check{\iota}(w) 1)(\check{X}_{\eta(r_1)},\dots,\check{X}_{\eta(r_{|w|})},  r_1,\dots, r_{|w|}) \right] dr_{|w|}\dots dr_1.
  \end{align*}
\end{theorem}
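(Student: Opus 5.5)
We follow the proof of Theorem~\ref{thm_moments1} step by step, replacing $X$ by the frozen scheme values $\xcn_{\eta(\cdot)}$. The first step is an analogue of Lemma~\ref{lem_recXN} for the scheme. For $m\ge 1$, $F\in\mathcal{Q}^m$, $\rb\in\Del_m$ and $t<r_m$, we write $\check{\bf X}_{\rb}=(\xcn_{\eta(r_1)},\dots,\xcn_{\eta(r_m)})$ and prove
\begin{align*}
  \E\left[ (\lcn_t)^N F(\check{\bf X}_{\rb},\rb)\right]=&\int_0^t  \E\left[ (\lcn_s)^{N-1} (\check{\mathcal{I}}^N F)(\check{\bf X}_{\rb},\xcn_{\eta(s)},\rb,s) \right] ds  \\
  &+\int_0^t  \E\left[ (\lcn_s)^{N-1} (\mathcal{K}^N F)(\check{\bf X}_{\rb},\xcn_{\eta(s)},\rb,s) \right] ds \\
  & +\int_0^t  \E\left[ (\lcn_s)^{N-2} (\mathcal{J}^N F)(\check{\bf X}_{\rb},\xcn_{\eta(s)},\rb,s) \right] ds .
\end{align*}
To get this, we apply Itô's formula to $(\lcn_t)^N$, with $L_0=0$. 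This gives
$$(\lcn_t)^N=\int_0^t N(\lcn_s)^{N-1}f(\xcn_{\eta(s)})dB_s+\int_0^t \Big(N(\lcn_s)^{N-1}b(\xcn_{\eta(s)})+\tfrac{N(N-1)}2(\lcn_s)^{N-2}f^2(\xcn_{\eta(s)})\Big)ds.$$
We multiply by $F(\check{\bf X}_{\rb},\rb)$ and take expectations. The two $ds$ terms give the $\mathcal{K}^N$ and $\mathcal{J}^N$ contributions directly, with $y=\xcn_{\eta(s)}$.

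For the stochastic integral we use the Clark--Ocone formula applied to $F(\check{\bf X}_{\rb},\rb)$. By Lemma~\ref{lem_err_Malliavin3}, each $\xcn_{\eta(r_j)}$ lies in $\mathbb{D}^{1,2}$ and has a deterministic Malliavin derivative. Moreover $\check{\bf X}_{\rb}$ is a Gaussian vector with bounded moments (Remark~\ref{rk_unifbound}), and $F$ has derivatives of exponential growth, so the chain rule applies. Hence
$$F(\check{\bf X}_{\rb},\rb)=\E[F(\check{\bf X}_{\rb},\rb)]+\sum_{j=1}^m\int_0^{\eta(r_j)}\E[\partial_{x_j}F(\check{\bf X}_{\rb},\rb)|\mathcal{F}_s]\,\D_s\xcn_{\eta(r_j)}\,dW_s.$$
Itô's isometry with $d\langle B,W\rangle_s=\rho\, ds$, together with the tower property (using $s<t<r_m$ and that $\lcn_s$ is $\mathcal{F}_s$-measurable), turns the martingale term into
$$\rho N\sum_j\int_0^t\E[(\lcn_s)^{N-1}f(\xcn_{\eta(s)})\partial_{x_j}F]\,\D_s\xcn_{\eta(r_j)}\,ds.$$
This is exactly the $\check{\mathcal{I}}^N$ term. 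Note that $\D_s\xcn_{\eta(r_j)}=0$ for $s\ge\eta(r_j)$, which is consistent with the definition~\eqref{def_Icheck}.

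To justify these manipulations, we use the following facts. The integrands are in $L^2$: $\lcn$ has all moments by BDG, since $f(\xcn_{\eta(s)})$ has exponential moments. $\E[\partial_{x_j}F(\check{\bf X}_{\rb},\rb)^2]<\infty$ holds by Gaussianity. Finally, $\int_0^{t_k}|\D_s\xcn_{t_k}|ds\le C$ by Lemma~\ref{lem_err_Malliavin3}. In addition, $\D_s\xcn_{t_k}$ is square integrable in $s$ since $\xcn_{t_k}\in\mathbb{D}^{1,2}$. These bounds make the stochastic integrals true martingales and allow Fubini.

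Second, we iterate by induction on $N$, exactly as in the proof of Theorem~\ref{thm_moments1} and~\cite[Proposition 3.8]{FSW}. We prove, for $F\in\mathcal{Q}^m$, ${\bf u}\in\Del_m$ and $t<u_m$, the generalized formula with $\check\iota(w)F$ evaluated at $(\check{\bf X}_{\bf u},\xcn_{\eta(r_1)},\dots,\xcn_{\eta(r_{|w|})},{\bf u},r_1,\dots,r_{|w|})$.
\begin{itemize}
\item The case $N=1$ is the lemma above; the $\mathcal{J}$ term vanishes.
\item For $N\ge2$, we apply the lemma and then the induction hypothesis at levels $N-1$ and $N-2$ to the functions $\check{\mathcal{I}}^NF$, $\mathcal{K}^NF$ and $\mathcal{J}^NF\in\mathcal{Q}^{m+1}$. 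These functions depend on the time variables only through deterministic multipliers, so they stay in $\mathcal{Q}^{m+1}$, and the new time $s$ becomes the last coordinate $<t<u_m$.
\item The words are then regrouped: a word $w$ with $\ell(w)=N$ factors uniquely as $\check\iota(w_1\cdots w_{|w|-1})\circ\check{\mathcal{I}}^N$, $\circ\mathcal{J}^N$ or $\circ\mathcal{K}^N$ according to its last letter. This matches the recursive definition of $\check\iota$.
\end{itemize}
Taking $F\equiv1$ and $m=0$ (formally, or equivalently starting directly from the Itô expansion) gives the stated formula.

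The main obstacle is the integrability and domain bookkeeping in the Clark--Ocone step, because $\D_s\xcn_{\eta(r_j)}$ has singularities at every grid point. We handle it only through the $L^1$ and $L^2$ bounds of Lemma~\ref{lem_err_Malliavin3} and the fact that the derivative is deterministic, so no pathwise control is needed.
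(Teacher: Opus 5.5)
Your proposal is correct and is essentially the paper's proof: a scheme analogue of Lemma~\ref{lem_recXN} obtained from Itô's formula, the Clark--Ocone formula for $F(\check{\mathbf X}_{\rb},\rb)$ and $d\langle B,W\rangle_s=\rho\,ds$, followed by the same induction on $N$ as for Theorem~\ref{thm_moments1}. Your frozen vector $(\xcn_{\eta(r_1)},\dots,\xcn_{\eta(r_m)})$ is the choice consistent with the definition of $\check{\mathcal I}^N$ and with the statement, whereas the paper's proof writes $(\xcn_{r_1},\dots,\xcn_{r_m})$. Also, as in the paper, the one-step identity should be stated for $t\le r_m$ rather than $t<r_m$, because the induction applies it at time $s$ with last time coordinate equal to $s$; its proof is unchanged.
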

\begin{proof}
  The proof is similar to the one of Theorem~\ref{thm_moments1}. More precisely, we first show for $N\ge 1$, $m\ge 1$, $F\in \mathcal{Q}^m$, $\rb \in \Del_m$, and $t\in[0,r_m)$
\begin{align*}
  \E\left[ (\lcn_t)^N F(\check{\mathbf{X}}^n_{\rb},\rb)\right]=&\int_0^t  \E\left[ (\lcn_s)^{N-1} (\check{\mathcal{I}}^N F)(\check{\mathbf{X}}^n_{\rb},\xcn_{\eta(s)},\rb,s) \right] ds  \\
  &+\int_0^t  \E\left[ (\lcn_s)^{N-1} ({\mathcal{K}}^N F)(\check{\mathbf{X}}^n_{\rb},\xcn_{\eta(s)},\rb,s) \right] ds \\
  & +\int_0^t  \E\left[ (\lcn_s)^{N-2} ({\mathcal{J}}^N F)(\check{\mathbf{X}}^n_{\rb},\xcn_{\eta(s)},\rb,s) \right] ds,
\end{align*}
 with $\check{\mathbf{X}}^n_{\rb}=(\xcn_{r_1},\dots,\xcn_{r_m})$, as in Lemma~\ref{lem_recXN}. We then conclude by induction. \end{proof}

% \begin{proof}
% First, we apply Itô formula to get 
% $$(\lcn_t)^N=\int_0^t N(\lcn_s)^{N-1} f(\xcn_{\eta(s)}) dB_s +\int_0^t N(\lcn_s)^{N-1} b(\xcn_{\eta(s)}) +\frac{N(N-1)}{2}(\lcn_s)^{N-2} f^2(\xcn_{\eta(s)}) ds.$$  
% From the Clark-Ocone formula, we have 
% $$F(\check{\bf X}^n_{\rb},\rb)=\E[F(\check{\bf X}^n_{\rb},\rb)]+\sum_{j=1}^m \int_0^{r_j} \E[\partial_{x_j}F(\check{\bf X}^n_{\rb},\rb)|\mathcal{F}_s] D_s \xcn_{r_j} dW_s, $$
% where $(\mathcal{F}_s,s\ge 0)$ denotes the natural filtration of $(B,W)$. This gives 
% \begin{align*}
% \E\left[ \left(\int_0^t NL_s^{N-1} f(X_s) dB_s\right)F({\bf X}_{\rb},\rb) \right]&=\rho \sum_{j=1}^m \int_0^{t} N \E[(L_s)^{N-1} f(X_s)\partial_{x_j}F({\bf X}_{\rb},\rb)]D_sX_{r_j} ds\\
% &=\int_0^t\E\left[ (L_s)^{N-1} (\mathcal{I}^N F)({\bf X}_{\rb},X_s,\rb,s) \right] ds.
% \end{align*}
% The two other terms follow immediately.
% \end{proof}

Theorems~\ref{thm_moments1} and~\ref{thm_moments2} give the moments explicitly by means of the the functions $\iota(w)1$ and $\check\iota(w)1$. However, these functions are defined by induction, and the goal of the next proposition is to get rid of it. 
\begin{prop}\label{prop_iota} (Calculation of $\iota(w)$ defined by~\eqref{def_iota}.)
Let $q\ge 0$, $\psi_1,\dots,\psi_q \in  \mathcal{C}_{\exp}^\infty(\R, \R)$ and  $F\in \mathcal{Q}^q$ such that $F(x_1,\dots,x_q,r_1,\dots,r_q)=\prod_{i=1}^q \psi_i(x_i)$  (convention $F \equiv 1$ if $q=0$). Let  $w \in \mathcal{W}$ with $w=w_1\dots w_{m}$ and $m=|w|\ge 1$. We define $N_I^w=\{j : w_j=I\}$, $N_J^w=\{j : w_j=J\}$ and $N_K^w=\{j : w_j=K\}$. 
\begin{itemize}
\item  If $N_I^w=\emptyset$ (i.e $w\in \mathcal{W}$ does not contain the letter~$I$), then we have 
$$(\iota(w)F)(x_1,\dots,x_{q+m},r_1,\dots,r_{q+m})=C_w \prod_{i=1}^q \psi_i(x_i) \prod_{l \in N^w_J } f^2(x_{m+q-l+1}) \prod_{l \in N^w_K } b(x_{m+q-l+1}).$$  
\item Otherwise, let $k=\#N_I^w\le m$ and $1\le j_1<\dots<j_k\le m$ such that $N_I^w=\{j_1,\dots,j_k\}$. Then, we have
\begin{align*}
  &(\iota(w)F)(x_1,\dots,x_q,\dots,x_{q+m},r_1,\dots,r_q,\dots,r_{q+m})\\
  &=C_{w} \sum_{l_1=1}^{m+q-j_1} \dots  \sum_{l_k=1}^{m+q-j_k} \D_{r_{m+q-j_1+1}}X_{r_{l_1}} \dots \D_{r_{m+q-j_k+1}}X_{r_{l_k}}\\
  & \times \partial_{x_{l_1}} \dots \partial_{x_{l_k}}\left( \prod_{i=1}^q \psi_i(x_i) \prod_{l \in N^w_J } f^2(x_{m+q-l+1}) \prod_{l \in N^w_K } b(x_{m+q-l+1}) \prod_{l \in N^w_I } f(x_{m+q-l+1})\right),
\end{align*}
\end{itemize}
with $C_w= 2^{-\#N_J^w}\rho^{\#N_I^w}\times \ell(w)! $. 

The same formulas hold for $\check{\iota}(w)F$, with $\D_{r_{m+q-j_\nu+1}} \xcn_{\eta(r_{l_\nu})}$ instead of $\D_{r_{m+q-j_\nu+1}}X_{r_{l_\nu}}$ for $\nu \in \{1,\dots,k\}$.
\end{prop}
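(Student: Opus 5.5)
The proof goes by induction on the length $m=|w|$ of the word, peeling off the \emph{first} letter $w_1$ rather than the last one. The recursive definition~\eqref{def_iota} composes operators from the right. Unfolding it gives
$$\iota(w)=\mathcal{O}_1^{\ell(w)}\circ\mathcal{O}_2^{\ell(w_2\dots w_m)}\circ\dots\circ\mathcal{O}_m^{\ell(w_m)},$$
where $\mathcal{O}_j\in\{\mathcal{I},\mathcal{J},\mathcal{K}\}$ is the operator associated with the letter $w_j$. Indeed, the superscript attached to $w_m$ is $\ell(w)$, and it decreases as letters are removed from the right, with each remaining letter recomputing $\ell$ on the shorter word. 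We therefore read the composition as acting on $F$ first through the operator of letter $w_m$, carrying superscript $\ell(w)$, then through $w_{m-1}$ with superscript $\ell(w_1\dots w_{m-1})$, and so on. I will make this ordering explicit and check it against the indexing $x_{m+q-l+1}$ in the statement: the letter $w_l$ adds the new variable in slot $q+m-l+1$. Since $w_m$ acts first, it creates slot $q+1$, and $w_1$ acts last, creating slot $q+m$.

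The prefactor $C_w$ is handled first, since it is decoupled from the functional form. Each application of $\mathcal{I}^N$ or $\mathcal{K}^N$ contributes a factor $N$, and each $\mathcal{J}^N$ contributes $N(N-1)/2$. The successive superscripts are $\ell(w_1\dots w_j)$ for $j=m,\dots,1$, and they decrease by $1$ or $2$ at each step. The product of these factors telescopes to $\ell(w)!\,2^{-\#N^w_J}$, and each $\mathcal{I}$ brings an additional $\rho$. The base cases are $m=1$ (one operator applied to $\prod\psi_i$) and $q=0$ with $F\equiv1$; both are immediate.

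For the functional form, the induction step applies one more operator to an expression of the claimed form, with $q$ and $m$ shifted appropriately. The cleanest route is to prove a stronger statement for general products $F=\prod_{i=1}^q\psi_i(x_i)$ with arbitrary $q$, so that the induction is on $m$ with $q$ free. Removing the letter $w_m$, I apply the induction hypothesis to the word $w_1\dots w_{m-1}$ and the new function $G=\mathcal{O}_m F\in\mathcal{Q}^{q+1}$.
\begin{itemize}
\item If $w_m=J$ or $K$, then $G$ is again a product of $\mathcal{C}^\infty_{\exp}$ functions: $F$ times $f^2(x_{q+1})$ or $b(x_{q+1})$. The formula transfers directly, since the new slot $q+1=m+q-m+1$ matches the index for $l=m$.
\item If $w_m=I$, then $G=\rho N f(x_{q+1})\sum_{l=1}^q \D_{r_{q+1}}X_{r_l}\,\partial_{x_l}F$. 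This is a finite sum of products of $\mathcal{C}^\infty_{\exp}$ functions with deterministic coefficients, so by linearity of $\iota(w_1\dots w_{m-1})$ the hypothesis applies termwise.
\end{itemize}
It then remains to commute the differential operators $\partial_{x_{l_\nu}}$ produced later with the derivative $\partial_{x_l}$ and the factor $f(x_{q+1})$ produced at this step, and to reassemble everything as a single $\partial_{x_{l_1}}\cdots\partial_{x_{l_k}}$ acting on the full product. This is valid because derivatives commute, and because $f(x_{q+1})$ is itself inside the product on which later $\partial$'s act. Later $I$-letters may differentiate with respect to $x_{q+1}$, and that is exactly why their summation range reaches slot $m+q-j_\nu$.

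I expect the main obstacle to be the index bookkeeping in this last step. I must check that the range $l_\nu\le m+q-j_\nu$ (all slots created before the letter $j_\nu$ acts), the time index $r_{m+q-j_\nu+1}$ of the Malliavin derivative, and the placement of $f(x_{m+q-l+1})$ inside the differentiated product all agree with the order in which the operators act. I would verify this on a two-letter word, such as $II$, before writing the general step. Finally, the version for $\check\iota(w)$ is identical: $\check{\mathcal{I}}$ differs from $\mathcal{I}$ only through the deterministic coefficient $\D_s\xcn_{\eta(r_j)}$, so the same induction goes through with $\D\xcn_{\eta(\cdot)}$ in place of $\D X$.
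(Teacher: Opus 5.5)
Your plan is the paper's proof. It does induction on $|w|$ writing $w=w'w_m$, applies $\mathcal{O}_m^{\ell(w)}$ to $F$ first, and then applies the hypothesis for $w'$ with $q+1$ factors (termwise by linearity when $w_m=I$; since the factors coming from $w'$ live in slots $q+2,\dots,q+m$, $\partial_{x_l}$ with $l\le q$ can be moved onto the full product). The only cosmetic difference is that you obtain $C_w$ from a global telescoping product instead of the recursions $C_w=\tfrac{\ell(w)(\ell(w)-1)}{2}C_{w'}$, $\ell(w)C_{w'}$, $\rho\,\ell(w)C_{w'}$.

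Two harmless slips: the correct unfolding is $\mathcal{O}_1^{\ell(w_1)}\circ\mathcal{O}_2^{\ell(w_1w_2)}\circ\cdots\circ\mathcal{O}_m^{\ell(w)}$, as your prose says, not the superscripts you displayed. Also, the step you actually perform peels off the last letter $w_m$, not the first.
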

%{\bf J'ai réindicé les indices muets l1...lk de FSW}
\begin{remark}
  This proposition slightly extends (in our framework) \cite[Proposition 3.13]{FSW} that calculates $\iota(w)1$. Here, we use a different induction argument that allows to better keep track of the constant~$C_w$. 
\end{remark}
\begin{remark}\label{rk_iota1}
  Proposition~\ref{prop_iota} gives $\iota(w)1$ simply by taking formally $q=0$ in the above formula. This is clear when $w$ has a single letter (i.e. $|w|=1$). We indeed have $\iota(I)1=0$, $(\iota(J)1)(x,r)=f^2(x)$ and $(\iota(K)1)(x,r)=b(x)$. For $|w|\ge 2$, we just use the induction $w=w'w_m$ with $w_m\in\{I,J,K\}$ and apply Proposition~\ref{prop_iota} for $w'$, $q=1$ and  $F(x,r)=(\iota(K)1)(w_m,r) \in \mathcal{Q}^1$.   
\end{remark}

\begin{proof}
  We prove this result by induction on the length $|w|=m$. For $m=1$, if $w=J$ (resp. $w=K$) then $N_J^w=\{1\}$ (resp. $N_K^w=\{1\}$) so that 
  $$(\iota(J)F)(x_1,\dots,x_{q+1},r_1,\dots,r_{q+1})=f^2(x_{q+1}) \prod_{i=1}^q \psi_i(x_i)$$
   (resp. $(\iota(K)F)(x_1,\dots,x_{q+1},r_1,\dots,r_{q+1})=b(x_{q+1}) \prod_{i=1}^q \psi_i(x_i)$). If $w=I$, we have $j_1=1$ and 
$$(\iota(I)F)(x_1,\dots,x_{q+1},r_1,\dots,r_{q+1})= \rho f(x_{q+1}) \sum_{l=1}^q \partial_{x_l}\left( \prod_{i=1}^q \psi_i(x_i)\right) \D_{r_{q+1}}X_{r_l}.$$

We now prove the induction step and consider $m\ge 2$. We write $w=w'w_m$, where $w' \in \mathcal{W}$ with $|w'|=m-1$. Let us first consider the case $w=w'J$ so that $\iota(w)=\iota(w')\circ \mathcal{J}^{\ell(w)}$. We have 
$$ \mathcal{J}^{\ell(w)}F(x_1,\dots,x_{q+1},r_1,\dots,r_{q+1})= \frac{\ell(w)(\ell(w)-1)}{2}f^2(x_{q+1}) \prod_{i=1}^q \psi_i(x_i),$$
and we apply the induction hypothesis to this function. If $w'$ does not contain the letter~$I$, we get 
\begin{align*}
  &(\iota(w')\mathcal{J}^{\ell(w)}F)(x_1,\dots,x_{q+m},r_1,\dots,r_{q+m})\\
  &=C_{w'} \frac{\ell(w)(\ell(w)-1)}{2}f^2(x_{q+1}) \prod_{i=1}^q \psi_i(x_i)  \prod_{l \in N^{w'}_J } f^2(x_{m+q-l+1}) \prod_{l \in N^{w'}_K } b(x_{m+q-l+1}) \\
  &=C_w  \prod_{i=1}^q \psi_i(x_i)  \prod_{l \in N^{w}_J } f^2(x_{m-q-l+1}) \prod_{l \in N^{w}_K } b(x_{m+q-l+1}),
\end{align*}
with $C_{w}=\frac{\ell(w)(\ell(w)-1)}{2}C_{w'}$ since $N^{w}_J=N^{w'}_J\cup{\{m\}}$ and $N^{w}_K=N^{w'}_K$. If $w'$ contains the letter~$I$, we get 
\begin{align*}
  &(\iota(w')\mathcal{J}^{\ell(w)}F)(x_1,\dots,x_{q+m},r_1,\dots,r_{q+m})=C_{w'} \frac{\ell(w)(\ell(w)-1)}{2}\\
  &\times
  \sum_{l_1=1}^{m+q-j'_1}\dots \sum_{l_k=1}^{m+q-j'_k}  \D_{r_{m+q-j'_1+1}}X_{r_{l_1}}\dots \D_{r_{m+q-j'_k+1}}X_{r_{l_k}}\\
  &  \times  \partial_{x_{l_1}} \dots \partial_{x_{l_k}}\left( \prod_{i=1}^q \psi_i(x_i) f^2(x_{q+1}) \prod_{l \in N^{w'}_J } f^2(x_{m+q-l+1}) \prod_{l \in N^{w'}_K } b(x_{m+q-l+1}) \prod_{l \in N^{w'}_I } f(x_{m+q-l+1})\right)\\
  &=C_{w} \sum_{l_1=1}^{m+q-j_1}\dots \sum_{l_k=1}^{m+q-j_k}  \D_{r_{m+q-j_1+1}}X_{r_{l_1}}\dots \D_{r_{m+q-j_k+1}}X_{r_{l_k}} \\
  & \times \partial_{x_{l_1}} \dots \partial_{x_{l_k}}\left( \prod_{i=1}^q \psi_i(x_i) \prod_{l \in N^w_J } f^2(x_{m+q-l+1}) \prod_{l \in N^w_K } b(x_{m+q-l+1}) \prod_{l \in N^w_I } f(x_{m+q-l+1})\right),
\end{align*}
with $C_{w}=\frac{\ell(w)(\ell(w)-1)}{2}C_{w'}$, using again $N^{w}_J=N^{w'}_J\cup{\{m\}}$, $N^{w}_K=N^{w'}_K$, $N^{w}_I=N^{w'}_I$ which amounts to have $j'_1=j_1,\dots,j'_k=j_k$.

The proof is similar if $w=w'K$ with $C_w=C_{w'} \ell(w)$. For $w=w'I$, we have $$ \mathcal{I}^{\ell(w)}F(x_1,\dots,x_{q+1},r_1,\dots,r_{q+1})= \rho \ell(w) f(x_{q+1}) \sum_{l=1}^q \partial_{x_l}\left( \prod_{i=1}^q \psi_i(x_i)\right) \D_{r_{q+1}}X_{r_l}.$$
Using the linearity of the operator $\iota(w')$ and the fact that $ \partial_{x_l}\left( \prod_{i=1}^q \psi_i(x_i)\right)  =\psi_l'(x_l) \prod_{i\not = l} \psi_i(x_i)$ is a product of smooth functions, we get by induction (in the case where $w'$ contains the letter~$I$, the other case can be analysed in the same way applying the first assertion of Proposition~\ref{prop_iota})
\begin{align*}
  &(\iota(w')\mathcal{I}^{\ell(w)}F)(x_1,\dots,x_{q+m},r_1,\dots,r_{q+m})=\rho C_{w'} \ell(w) \times \sum_{l=1}^q \sum_{l_1=1}^{m+q-j'_1}\dots \sum_{l_k=1}^{m+q-j'_k} \bigg\{\\
  & \partial_{x_{l_1}} \dots \partial_{x_{l_k}}\left( \partial_{x_{l}}\left(\prod_{i=1}^q \psi_i(x_i) f(x_{q+1}) \right)\prod_{\bar{l} \in N^{w'}_J } f^2(x_{m+q-\bar{l}+1}) \prod_{\bar{l} \in N^{w'}_K } b(x_{m+q-\bar{l}+1}) \prod_{\bar{l} \in N^{w'}_I } f(x_{m+q-\bar{l}+1})\right)\\
  &  \times  \D_{r_{m+q-j'_1+1}}X_{r_{l_1}} \dots \D_{r_{m+q-j'_k+1}}X_{r_{l_k}}\times D_{r_{q+1}}X_{r_l} \bigg\},
\end{align*}
where $k=\# N_I^{w'}$ and $N_I^{w'}=\{j'_1,\dots,j'_k\}$.
We notice that $j'_1=j_1,\dots,j'_k=j_k$ and $j_{k+1}=m$ so that $m+q-j_{k+1}+1=q+1$. Besides, $m+q-\bar{l}+1 \ge q+2$ since $\bar{l}\le m-1$, so that we can put the product of functions inside the derivation with respect to $x_l$, for $l\le q$. Since $N_I^w=N_I^{w'}\cup \{m \}$, $N_J^w=N_J^{w'}$ and $N_K^w=N_K^{w'}$, this gives 
\begin{align*}
  &(\iota(w')\mathcal{I}^{\ell(w)}F)(x_1,\dots,x_{q+m},r_1,\dots,r_{q+m})=\\
  &=C_w \sum_{l_1=1}^{m+q-j'_1}\dots \sum_{l_{k+1}=1}^{m+q-j_{k+1}} \D_{r_{m+q-j'_1+1}}X_{r_{l_1}} \dots \D_{r_{m+q-j_{k+1}+1}}X_{r_{l_{k+1}}}\\
  &  \times  \partial_{x_{l_1}} \dots \partial_{x_{l_{k+1}}}\left( \prod_{i=1}^q \psi_i(x_i)  \prod_{l \in N^{w}_J } f^2(x_{m+q-l+1}) \prod_{l \in N^{w}_K } b(x_{m+q-l+1}) \prod_{l \in N^{w}_I } f(x_{m+q-l+1})\right),
\end{align*}
with $C_w=\rho C_{w'} \ell(w)$.
\end{proof}

\subsection{Weak error analysis}\label{Subsec_weakerror}
We are now in position to analyse the error 
\begin{align*}
   &\E[(L_T)^N]-\E[(\lcn_T)^N]\\
   &=  \sum_{w\in \cW , \ell(w)=N} \int_0^T \dots \int_0^{r_{|w|-1}} \E\bigg[(\iota(w) 1)(X_{r_1},\dots,X_{r_{|w|}},  r_1,\dots, r_{|w|}) \\
   &\quad \quad -(\check{\iota}(w) 1)(\xcn_{\eta(r_1)},\dots,\xcn_{\eta(r_{|w|})},  r_1,\dots, r_{|w|}) \bigg] dr_{|w|}\dots dr_1.
\end{align*}
By Proposition~\ref{prop_iota} and Remark~\ref{rk_iota1}, the expectation can be written as a finite sum of the following terms  
\begin{align*}
&\E\left[\Psi^w_{l_1,\dots,l_k}(X_{r_1},\dots,X_{r_{m}}) \right] \prod_{\nu=1}^k \D_{r_{m-j_\nu+1}}X_{r_{l_\nu}} \\&  -\E\left[\Psi^w_{l_1,\dots,l_k}(\xcn_{\eta(r_1)},\dots,\xcn_{\eta(r_{m})}) \right] \prod_{\nu=1}^k \D_{r_{m-j_\nu+1}} \xcn_{\eta(r_{l_\nu})} ,
\end{align*}
with $m=|w|$, $k=\#N_I^w$, $\{j_1,\dots,j_k\}=N_I^w$ and $\Psi^w_{l_1,\dots,l_k}:\R^m \to \R^m$ is a product of $m$ functions i.e.~$\Psi^w_{l_1,\dots,l_k}(x_1,\dots,x_m)=\prod_{m'=1}^m g_{m'}(x_{m'})$ with $g_1,\dots,g_m \in \mathcal{C}_{\exp}^\infty(\R, \R)$, see Equation~\eqref{def_Cexpinfty}. Note that every word~$w$ finishing by the letter~$I$ gives $\iota(w)1=0$. Thus, we may assume in what follows that $j_k<m$ and $k<m$.  

In the following, we drop the superscript and subscript of $\Psi$ to have a simpler notation, and we analyse the error term
\begin{align*}
\mathcal{E}=\int_0^T \dots \int_0^{r_{m-1}} &\E\left[\Psi(X_{r_1},\dots,X_{r_{m}}) \right] \prod_{\nu=1}^k \D_{r_{m-j_\nu+1}}X_{r_{l_\nu}}  \\
& -\E\left[\Psi(\xcn_{\eta(r_1)},\dots,\xcn_{\eta(r_{m})}) \right] \prod_{\nu=1}^k \D_{r_{m-j_\nu+1}} \xcn_{\eta(r_{l_\nu})} \ dr_{m}\dots dr_1,
\end{align*}
when $\Psi(x_1,\dots,x_m)=\prod_{i=1}^mg_i(x_i)$ for some functions $g_1,\dots,g_m \in \mathcal{C}_{\exp}^\infty(\R, \R)$. In particular, $\Psi$ satisfies
\begin{equation*}%\label{test_func}
 \forall k_1,\dots,k_m \in \N, \exists C\in \R_+ \forall x \in \R^m,\ |\partial_1^{k_1}\dots \partial_{m}^{k_m}\Psi(x)|\le Ce^{C\sum_{i=1}^m|x_i|}.
\end{equation*}

As in Section~\ref{Sec_cubic} for the cubic case, we split $\mathcal{E}$ as follows
$$\mathcal{E}=\mathcal{E}_{1,1}+\mathcal{E}_{1,2}+\mathcal{E}_{2,1}+\mathcal{E}_{2,2},$$
with {\small
\begin{align*}
  \mathcal{E}_{1,1}&=\int_0^T \dots \int_0^{r_{m-1}} \E\left[\Psi(X_{r_1},\dots,X_{r_{m}}) \right] \left( \prod_{\nu=1}^k \D_{r_{m-j_\nu+1}}X_{r_{l_\nu}}- \prod_{\nu=1}^k \D_{r_{m-j_\nu+1}}X_{\eta(r_{l_\nu})}  \right) dr_{m}\dots dr_1,\\
    \mathcal{E}_{1,2}&=\int_0^T \dots \int_0^{r_{m-1}} \E\left[\Psi(X_{r_1},\dots,X_{r_{m}})-\Psi(X_{\eta(r_1)},\dots,X_{\eta(r_{m})}) \right]  \prod_{\nu=1}^k \D_{r_{m-j_\nu+1}}X_{\eta(r_{l_\nu})}   dr_{m}\dots dr_1,\\
    \mathcal{E}_{2,1}&=\int_0^T \dots \int_0^{r_{m-1}} \E\left[\Psi(X_{\eta(r_1)},\dots,X_{\eta(r_{m})})-\Psi(\xcn_{\eta(r_1)},\dots,\xcn_{\eta(r_{m})}) \right] \prod_{\nu=1}^k \D_{r_{m-j_\nu+1}}\xcn_{\eta(r_{l_\nu})}     dr_{m}\dots dr_1,\\
    \mathcal{E}_{2,2}&=\int_0^T \dots \int_0^{r_{m-1}} \E\left[\Psi(X_{\eta(r_1)},\dots,X_{\eta(r_{m})}) \right] \left( \prod_{\nu=1}^k \D_{r_{m-j_\nu+1}}X_{\eta(r_{l_\nu})} - \prod_{\nu=1}^k \D_{r_{m-j_\nu+1}}\xcn_{\eta(r_{l_\nu})}  \right)   dr_{m}\dots dr_1.\end{align*}
}

The two first terms do not involve the discretization process $\xcn$ and are essentially already analysed in~\cite{FSW}. Namely, $\mathcal{E}_{1,2}$ corresponds to~\cite[Equation~(25)]{FSW} and $\mathcal{E}_{1,1}$ corresponds to a sum of terms~\cite[Equation (26)]{FSW}, with the Malliavin derivatives replacing the kernels\footnote{Note that  one should of course either read in~\cite[Equation (26)]{FSW}  $\widehat{W}_{\bf t}$ instead of $\widehat{W}_{\eta(\bf t)}$ or in~\cite[Equation (25)]{FSW} $K$ instead of $\tilde{K}$, so that the sum of~\cite[Equation (25) and (26)]{FSW} gives the weak error~\cite[Equation (24)]{FSW}.}. Thus, we will be brief on the analysis of these two terms and rather focus on the two other terms which are specific to our new setting and arise from the discretization of the dynamics of the rough process~$X$. For that purpose, we provide additional technical analysis, see in particular Proposition~\ref{prop_E22}.

\subsubsection{Analysis of $\mathcal{E}_{1,1}$ and $\mathcal{E}_{1,2}$}
Following~\cite{FSW}, we rewrite $$ \prod_{\nu=1}^k \D_{r_{m-j_\nu+1}}X_{r_{l_\nu}}= \prod_{i=2}^m  \D_{r_{i}}X_{r_{\alpha(i)}},$$ with $\alpha:\{1,\dots m\} \to \mathbb{N}$ such that $\alpha(i)=l_\nu$ if $i=m-j_\nu+1$ for some $\nu \in \{1,\dots,k\}$, and  $\alpha(i)=0$ otherwise. We use as in~\cite{FSW} the convention that $\D_{r_{i}}X_{r_{\alpha(i)}}=1$ if $\alpha(i)=0$. Note that $\alpha(i)< i$, so that $r_i< r_{\alpha(i)}$. Note also that $\alpha(1)=0$ since $j_k<m$ (we recall that words~$w$ finishing by the letter $I$ give $\iota(w)I=0$, and that $j_1,\dots,j_k$ are the positions of the letter~$I$ in the word $w$), which explains why the above product starts from~$2$.  This new expression is used to have all the indices $i\in\{1,\dots,m\}$ which makes the following error decomposition clearer. Namely, using that $\prod_{i=2}^m a_i -\prod_{i=2}^m b_i= \sum_{j=2}^m (\prod_{i<j} a_i) (a_j-b_j)(\prod_{i>j} b_i)$, we get
\begin{align*}
  \mathcal{E}_{1,1}=&\sum_{j=2}^m  \int_0^T \dots \int_0^{r_{m-1}} \E\left[\Psi(X_{r_1},\dots,X_{r_{m}}) \right] \prod_{i<j} \D_{r_{i}}X_{r_{\alpha(i)}} \\
  & \times \left( \D_{r_{j}}X_{r_{\alpha(j)}} -\D_{r_{j}}X_{\eta(r_{\alpha(j)})}  \right)\prod_{i>j} \D_{r_{i}}X_{\eta(r_{\alpha(i)})} dr_{m}\dots dr_1 \\
  =:&\sum_{j=2}^m \mathcal{E}_{1,1,j}.
\end{align*}
The term $\mathcal{E}_{1,1,j}$ corresponds in our framework to~\cite[Equation (26)]{FSW}, with the Malliavin derivatives replacing the kernels.
From~\eqref{der_Malliavin_X}, the Malliavin derivative is a series whose the first term is precisely the fractional kernel considered in~\cite{FSW}.  Following their analysis (namely \cite[Lemmas~4.4 and 4.7]{FSW}) by distinguishing the first term in~\eqref{der_Malliavin_X} and the other ones as in the analysis of $\mathcal{E}_{1,1}$ in Section~\ref{Sec_cubic},  we get that $|\mathcal{E}_{1,1,j}|\le C \vn $ and then $|\mathcal{E}_{1,1}|\le C \vn$.
In the same way, adapting~\cite[Lemma 4.3]{FSW} to our framework allows to get $|\mathcal{E}_{1,2}|\le C \vn$.

%\subsubsection{Analysis of $\mathcal{E}_{1,2}$}

\subsubsection{Analysis of $\mathcal{E}_{2,1}$}

We use the same notation as for $\mathcal{E}_{1,1}$ and write
\begin{align*}
   \mathcal{E}_{2,1}&=\int_0^T \dots \int_0^{r_{m-1}} \E\left[\Psi(X_{\eta(r_1)},\dots,X_{\eta(r_{m})})-\Psi(\xcn_{\eta(r_1)},\dots,\xcn_{\eta(r_{m})}) \right] \prod_{i=2}^m \D_{r_{i}}\xcn_{\eta(r_{\alpha(i)})}     dr_{m}\dots dr_1.
\end{align*}
By Corollary~\ref{cor_weak_ito}, we get 
\begin{align*}
   |\mathcal{E}_{2,1}|&\le C \int_0^T \dots \int_0^{r_{m-1}} \bigg(  \sum_{1\le k\le l \le m} |Cov(X_{\eta(r_k)},X_{\eta(r_l)})- Cov(\xcn_{\eta(r_k)},\xcn_{\eta(r_l)}) | \\&+\sum_{l=1}^m |\E[X_{\eta(r_l)}]-\E[\xcn_{\eta(r_l)}]| \bigg) \prod_{i=2}^m | \D_{r_{i}}\xcn_{\eta(r_{\alpha(i)})}|     dr_{m}\dots dr_1.
\end{align*}
By Theorem~\ref{thm_covariance_error}, and noting that the term in parenthesis vanishes when $r_l<t_1$, we get 
\begin{align*}
   |\mathcal{E}_{2,1}|&\le C \sum_{l=1}^m  \int_0^T \dots \int_0^{r_{m-1}} \mathbf{1}_{r_l>t_1}  \bigg( (1+\eta(r_l)^{\alpha-1}){\bf v}_n(\alpha) + \frac{\eta(r_l)^{2\alpha-2}}{n}  \bigg) \prod_{i=2}^m | \D_{r_{i}}\xcn_{\eta(r_{\alpha(i)})}|     dr_{m}\dots dr_1,
\end{align*}
since $r_l\le r_k$. By Lemma~\ref{lem_err_Malliavin3}, we get 
\begin{align*}
   |\mathcal{E}_{2,1}|&\le  C \sum_{l=1}^m \int_0^T \dots \int_0^{r_{l-1}} \mathbf{1}_{r_l>t_1} \bigg( (1+\eta(r_l)^{\alpha-1}){\bf v}_n(\alpha) + \frac{\eta(r_l)^{2\alpha-2}}{n}  \bigg) \prod_{i=2}^l | \D_{r_{i}}\xcn_{\eta(r_{\alpha(i)})}|     dr_{l}\dots dr_1,
\end{align*}
by integrating with respect to $r_m,\dots,r_{l+1}$ (as usual, the constant $C$ changes from line to line). Now, we proceed as in the cubic case: by~\eqref{majo_der_malliavin_sch}, we get 
$$ \int_{t_1}^{r_{l-1}} \eta(r_l)^\beta |\D_{r_l} \xcn_{\eta(r_{\alpha(l)})}|dr_l\le C \left(\mathbf{1}_{\alpha+\beta\ge 0} T^{\alpha+\beta}+\mathbf{1}_{\alpha+\beta< 0} t_1^{\alpha+\beta} \right),$$
for $\beta \in (-1,0]$. This gives with $\beta\in \{\alpha-1,2\alpha-2\}$, as in the cubic case, 
$$ \int_0^{r_{l-1}} \mathbf{1}_{r_l>t_1} C \bigg( (1+\eta(r_l)^{\alpha-1}){\bf v}_n(\alpha) + \frac{\eta(r_l)^{2\alpha-2}}{n}  \bigg) | \D_{r_{l}}\xcn_{\eta(r_{\alpha(l)})}|     dr_{l}\le C \vn .$$
Finally, we use again Lemma~\ref{lem_err_Malliavin3} to get $|\mathcal{E}_{2,1}|\le C \vn$.

\subsubsection{Analysis of $\mathcal{E}_{2,2}$}
We use the usual decomposition for the difference of the products
\begin{equation}\label{product_expansion}
  \prod_{i \in I } a_i - \prod_{i \in I } b_i  =\sum_{j \in I} \left(\prod_{i \in I, i<j} b_i \right)(a_j-b_j) \left(\prod_{i \in I, i>j} a_i\right)
\end{equation}
 to get with $I=\{2\dots,m\}$
\begin{align*}
 &\prod_{i=2}^m \D_{r_{i}}X_{\eta(r_{\alpha(i)})} - \prod_{i=2}^m \D_{r_{i}}\xcn_{\eta(r_{\alpha(i)})}\\&=\sum_{j=2}^m \prod_{2\le i<j} \D_{r_{i}}\xcn_{\eta(r_{\alpha(i)})} 
   \times \left( \D_{r_{j}}X_{\eta(r_{\alpha(j)})} -\D_{r_{j}}\xcn_{\eta(r_{\alpha(j)})}  \right)\prod_{m\ge i>j} \D_{r_{i}}X_{\eta(r_{\alpha(i)})}.
\end{align*}
Here, we use the standard convention $\prod_{\emptyset}=1$.
Note that in~\eqref{product_expansion} $a$ and $b$ play symmetric role up to the minus sign, but it is important to keep the Malliavin derivatives of the original process for $i>j$ in the next analysis to be able to show the integration by parts~\eqref{IPP}. 
We then write 
\begin{align}
  \mathcal{E}_{2,2}&=\sum_{j=2}^m  \int_0^T \dots \int_0^{r_{m-1}} \E\left[\Psi(X_{r_1},\dots,X_{r_{m}}) \right] \prod_{i<j} \D_{r_{i}}\xcn_{\eta(r_{\alpha(i)})} \label{decompo_E22}  \\
    &\times \left( \D_{r_{j}}X_{\eta(r_{\alpha(j)})} -\D_{r_{j}}\xcn_{\eta(r_{\alpha(j)})}  \right)\prod_{i>j} \D_{r_{i}}X_{r_{\alpha(i)}} 
dr_{m}\dots dr_1  + \mathcal{R}^1_{2,2}+ \mathcal{R}^2_{2,2} \text{ with } \notag\\
\mathcal{R}^1_{2,2} &= \sum_{j=2}^m  \int_0^T \dots \int_0^{r_{m-1}} \E\left[\Psi(X_{r_1},\dots,X_{r_{m}}) \right] \prod_{i<j} \D_{r_{i}}\xcn_{\eta(r_{\alpha(i)})}  \notag\\
    &\times \left( \D_{r_{j}}X_{\eta(r_{\alpha(j)})} -\D_{r_{j}}\xcn_{\eta(r_{\alpha(j)})}  \right) \left( \prod_{i>j} \D_{r_{i}}X_{\eta(r_{\alpha(i)})} -\prod_{i>j} \D_{r_{i}}X_{r_{\alpha(i)}} \right) 
dr_{m}\dots dr_1 \notag\\
\mathcal{R}^2_{2,2} &= \sum_{j=2}^m  \int_0^T \dots \int_0^{r_{m-1}} \left( \E\left[\Psi(X_{\eta(r_1)},\dots,X_{\eta(r_{m})}) \right] - \E\left[\Psi(X_{r_1},\dots,X_{r_{m}}) \right] \right)\prod_{i<j} \D_{r_{i}}\xcn_{\eta(r_{\alpha(i)})} \notag \\
 &   \times \left( \D_{r_{j}}X_{\eta(r_{\alpha(j)})} -\D_{r_{j}}\xcn_{\eta(r_{\alpha(j)})}  \right)\prod_{i>j} \D_{r_{i}}X_{\eta(r_{\alpha(i)})} 
dr_{m}\dots dr_1. \notag
\end{align}
Note that $\mathcal{R}^2_{2,2}$ ``replaces'' $\E\left[\Psi(X_{\eta(r_1)},\dots,X_{\eta(r_{m})}) \right]$ by $\E\left[\Psi(X_{r_1},\dots,X_{r_{m}}) \right] $ and $\mathcal{R}^1_{2,2}$ ``replaces'' $\prod_{i>j} \D_{r_{i}}X_{\eta(r_{\alpha(i)})}$ by $\prod_{i>j} \D_{r_{i}}X_{r_{\alpha(i)}} $.

We first analyse $\mathcal{R}^1_{2,2}$. To do so, we use an upper bound of $|\E\left[\Psi(X_{r_1},\dots,X_{r_{m}}) \right]|$ and the triangular inequality, so that we have to upper bound the integrals
\begin{align*} \int_0^T \dots \int_0^{r_{m-1}} \prod_{i<j} |\D_{r_{i}}\xcn_{\eta(r_{\alpha(i)})}| & \left| \D_{r_{j}}X_{\eta(r_{\alpha(j)})} -\D_{r_{j}}\xcn_{\eta(r_{\alpha(j)})}  \right| \\& \times \left| \prod_{i>j} \D_{r_{i}}X_{\eta(r_{\alpha(i)})} -\prod_{i>j} \D_{r_{i}}X_{r_{\alpha(i)}} \right| 
dr_{m}\dots dr_1 .\end{align*}
We use again~\eqref{product_expansion} and get
$$ \prod_{i>j} \D_{r_{i}}X_{\eta(r_{\alpha(i)})} -\prod_{i>j} \D_{r_{i}}X_{r_{\alpha(i)}} =\sum _{i>j}  \prod_{j<k<i} \D_{r_{k}}X_{r_{\alpha(k)}} \left(\D_{r_{i}}X_{\eta(r_{\alpha(i)})}  -\D_{r_{i}}X_{r_{\alpha(i)}} \right) \prod_{k>i}\D_{r_{k}}X_{\eta(r_{\alpha(k)})}.$$
Let us consider the term~$i$ of this sum, and integrate first  from the $dr_m$ to $dr_i$. We have
\begin{align*}
  &\int_0^{r_{i-1}} \dots \int_0^{r_{m-1}}  \left|\D_{r_{i}}X_{\eta(r_{\alpha(i)})}  -\D_{r_{i}}X_{r_{\alpha(i)}} \right| \prod_{k>i}|\D_{r_{k}}X_{\eta(r_{\alpha(k)})} | dr_m \dots dr_i \\& \le  C_T^{m-i-1}\int_0^{r_{i-1}}\left|\D_{r_{i}}X_{\eta(r_{\alpha(i)})}  -\D_{r_{i}}X_{r_{\alpha(i)}} \right| dr_i,
\end{align*} 
by using~\eqref{eq_def_majCT}. Now, using~\eqref{bound_integ}, we obtain that this integral is $O(n^{-\alpha})$. Integrating then from $dr_i$ to $dr_{j+1}$, we get
$$\mathcal{R}^1_{2,2}\le C n^{-\alpha} \sum_{j=2}^m  \int_0^T \dots \int_0^{r_{j-1}}  \prod_{i<j} |\D_{r_{i}}\xcn_{\eta(r_{\alpha(i)})}| \left| \D_{r_{j}}X_{\eta(r_{\alpha(j)})} -\D_{r_{j}}\xcn_{\eta(r_{\alpha(j)})}  \right|dr_j\dots dr_1.  $$
We now use Lemma~\ref{lem_err_Malliavin_int} for the integration with respect to $dr_j$ and then Lemma~\ref{lem_err_Malliavin3} for the integration from $dr_{j-1}$ to $dr_1$, and we get $\mathcal{R}^1_{2,2}\le C n^{-2\alpha} =O(\vn)$ since $2\alpha>1$.

We now analyse $\mathcal{R}^2_{2,2}$. By Proposition~\ref{prop_phi}~(1), we have 
$$\left| \E\left[\Psi(X_{\eta(r_1)},\dots,X_{\eta(r_{m})}) \right] - \E\left[\Psi(X_{r_1},\dots,X_{r_{m}}) \right] \right|\le C \left(\frac{T}n\right) ^{2\alpha-1}.$$
Then, we integrate from $dr_m$ to $dr_1$ as follows: we use~\eqref{eq_def_majCT} for $dr_{i}$ with $i>j$, Lemma~\ref{lem_err_Malliavin_int} for the integral with respect to $dr_{j}$ and finally Lemma~\ref{lem_err_Malliavin3} for $dr_{i}$ with $i<j$. This leads to 
$$|\mathcal{R}^2_{2,2}|\le  C \left(\frac{T}n\right) ^{3\alpha-1} =O(\vn).$$ 

We then go back to Equation~\eqref{decompo_E22} and  define
\begin{align*}
  &\mathcal{E}_{2,2,j}= \int_0^T \dots \int_0^{r_{j-1}} G_j(r_1,\dots,r_j) \prod_{i<j} \D_{r_{i}}\xcn_{\eta(r_{\alpha(i)})}  \left( \D_{r_{j}}X_{\eta(r_{\alpha(j)})} -\D_{r_{j}}\xcn_{\eta(r_{\alpha(j)})}  \right)
dr_{j}\dots dr_1 , \\
&G_j(r_1,\dots,r_j)=\int_0^{r_j}\int_0^{r_{j+1}}\dots\int_0^{r_{m-1}}  \E\left[\Psi({ X}_{{\bf r}})\right] \Pi_j({\bf r})dr_{j+1}\dots dr_m,
\end{align*}
with ${\bf r}=(r_1,\dots,r_m)$,  $$\Pi_j({\bf r}):=\prod_{i>j} \D_{r_{i}}X_{r_{\alpha(i)}} $$ and $X_{{\bf r}}=(X_{r_1},\dots,X_{r_m})$. Note that in the product, we distinguish the indices strictly greater than $j$ that are frozen on the time grid, while the indices smaller or equal to~$j$ are not frozen, so that the function $G_j$ is continuous with respect to $(r_1,\dots,r_j)$. The analysis of this term mainly relies on the following proposition, whose proof is quite technical and is postponed in the appendix. 
\begin{prop}\label{prop_E22}
  Let  $1\le j\le m$, $\ell_{j+1},\dots,\ell_{m}\ge 1$. Then the function
\begin{align*}
  H_j(r_1,\dots,r_j)=\int_0^{r_j}\int_0^{r_{j+1}}\dots\int_0^{r_{m-1}}  \phi({\bf r})\prod_{i>j} (r_{\alpha(i)}-r_i)^{\ell_i \alpha-1} dr_m\dots dr_{j+1}
\end{align*}
with ${\bf r}=(r_1,\dots,r_m)$ and $\phi({\bf r})=\E[\Psi(X_{\bf r})]$ is differentiable with respect to $r_{j}$ and 
$$\int_0^{r_{j-1}}|\partial_j H (r_1,\dots,r_j) | dr_j \le C_{m,\phi} T^{ 2\alpha \sum_{i=j+1}^m  \ell_i}. $$
\end{prop}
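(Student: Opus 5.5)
The plan is to make the dependence on $r_j$ explicit by rescaling the inner integration variables, differentiate under the integral sign, and bound each resulting term with the regularity of $\phi$ from Proposition~\ref{prop_phi}. Set $r_i = r_j u_i$ for $i>j$, with $1>u_{j+1}>\dots>u_m>0$. For $i>j$ there are two cases. If $\alpha(i)\le j-1$, then $r_{\alpha(i)}$ is a fixed outer variable and $r_{\alpha(i)}-r_i = r_{\alpha(i)}-r_ju_i$. If $\alpha(i)\ge j$, then $r_{\alpha(i)}-r_i=r_j(u_{\alpha(i)}-u_i)$ (with $u_j:=1$), which gives a factor $r_j^{\ell_i\alpha-1}$. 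After the change of variables,
\begin{equation*}
H_j=\int_{\Del} r_j^{\,m-j+\sum_{i\in A}(\ell_i\alpha-1)}\,\phi(r_1,\dots,r_j,r_ju_{j+1},\dots,r_ju_m)\prod_{i\in A}(u_{\alpha(i)}-u_i)^{\ell_i\alpha-1}\prod_{i\notin A}(r_{\alpha(i)}-r_ju_i)^{\ell_i\alpha-1}\,du,
\end{equation*}
where $A=\{i>j:\alpha(i)\ge j\}$ and the integral runs over the simplex in $u$. Differentiating in $r_j$ produces three kinds of terms.

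First, the power of $r_j$ gives a term bounded by $C r_j^{-1}|H_j|$-type quantities. Since every exponent $\ell_i\alpha-1>-1$ and the inner integrals are of size $r_j^{\,\cdots}$, this is integrable near $0$. Second, the derivative falls on $\phi$, giving $\partial_j\phi+\sum_{i>j}u_i\partial_i\phi$. Third, for $i\notin A$ the derivative falls on the kernel, giving $(\ell_i\alpha-1)u_i(r_{\alpha(i)}-r_ju_i)^{\ell_i\alpha-2}$.

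For the second kind, I would use Proposition~\ref{prop_phi}~(2)--(3), the multi-time analogue of Corollary~\ref{cor_phi}. It gives $|\partial_i\phi({\bf r})|\le C\sum_{k\ne i}(|r_i-r_k|^{2\alpha-2}+r_i^{2\alpha-2})$. Each singularity $|r_i-r_k|^{2\alpha-2}$ is integrable because $2\alpha-2>-1$. When it is combined with the kernels $(r_{\alpha(i)}-r_i)^{\ell_i\alpha-1}$, Beta-function estimates of the type \eqref{Beta_identity} show that the total exponent stays above $-1$, since $2\alpha-2+\alpha-1>-2$.

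The third kind, where the kernel is differentiated, is the main obstacle. The factor $(r_{\alpha(i)}-r_ju_i)^{\ell_i\alpha-2}$ is not integrable near $r_ju_i=r_{\alpha(i)}$ when $\ell_i=1$. I would avoid differentiating the kernel at all, by integrating by parts in $r_i$ before differentiating, or equivalently by rewriting $\partial_{r_j}$ of the kernel as $-\partial_{r_i}$ of the same kernel in the variable $r_i=r_ju_i$ and transferring that derivative onto $\phi$ and the neighbouring integration limits. This step is exactly where the integration by parts \eqref{IPP} announced in the text enters. The resulting boundary terms lie on $\{r_i=r_{i-1}\}$ or $\{r_i=0\}$. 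There the kernel is either evaluated at a point where it is integrable in the remaining variables, or it vanishes because $\ell_i\alpha-1$ terms integrated once become $(\cdot)^{\ell_i\alpha}$. All remaining singularities are then of the integrable forms listed above.

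Finally, I would integrate $|\partial_jH_j|$ in $r_j$ over $(0,r_{j-1})$. Each $u$-integral is a product of Beta integrals, and collecting the powers of $T$ gives a bound of the form $C_{m,\phi}T^{\gamma}$. Matching $\gamma$ to the stated exponent $2\alpha\sum_{i>j}\ell_i$ requires only bookkeeping, and the finiteness of $C_{m,\phi}$ follows from the exponential-moment bounds on the Gaussian vector $X_{\bf r}$ given by Propositions~\ref{prop_esp} and~\ref{prop_OUexplicit}.
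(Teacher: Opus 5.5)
Your full rescaling $r_i=r_ju_i$ for all $i>j$ differs from the paper's route. The paper either differentiates directly (Case 1, $\alpha(j+1)<j$) or rescales only the maximal chain $j+1,\dots,I$ with $\alpha(k)=k-1$ (Case 2), which produces the boundary terms $I_1$ and $J'_1$. Your route has the merit of a fixed $u$-domain, so no boundary terms appear. \textbf{The gap is your bound on the $\phi$-derivative term, which fails for $\alpha\le 2/3$.} Because every $r_i$ now moves with $r_j$, you obtain $u_i\partial_i\phi$ for all $i>j$. Proposition~\ref{prop_phi}~(2)--(3) only gives $|\partial_i\phi|\le C\big((r_{i-1}-r_i)^{2\alpha-2}+(r_i-r_{i+1})^{2\alpha-2}\big)$. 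When $\alpha(i)=i-1$ and $\ell_i=1$, this singularity sits exactly where the kernel $(r_{i-1}-r_i)^{\alpha-1}$ is singular. The case $\ell_i=1$ cannot be avoided, since it is the leading term of the series \eqref{der_Malliavin_X}. Already for $m=j+1$, $\alpha(j+1)=j$, $\ell_{j+1}=1$, your bound requires $r_j^{3\alpha-2}\int_0^1(1-u)^{3\alpha-3}\,du$, which diverges for $\alpha\le 2/3$. The criterion $2\alpha-2+\alpha-1>-2$ that you invoke is the right one only for singularities at distinct points that merge in a two-dimensional integral. That is the situation of the paper's term $I_2$ in Case 1, where the paper settles $\alpha\le 2/3$ by the hypergeometric computation. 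For two singular factors at the same point of a single variable you need $3\alpha-3>-1$, and no term-by-term absolute bound can repair this.

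The missing ingredient is a cancellation: you must estimate the whole directional derivative $\sum_{p\ge j}u_p\partial_p\phi$ (with $u_j=1$), not each term separately. By Lemma~\ref{lem_weak_ito}, the covariance enters through the pairs $u_p\partial_1\covX(r_p,r_q)+u_q\partial_2\covX(r_p,r_q)=u_q(\partial_1+\partial_2)\covX(r_p,r_q)+(u_p-u_q)\partial_1\covX(r_p,r_q)$. Since $\D_sX_t$ depends only on $t-s$ and $\covX(a,b)=\int_0^{a\wedge b}\D_sX_a\,\D_sX_b\,ds$, a time shift gives $(\partial_1+\partial_2)\covX(a,b)=\D_0X_a\,\D_0X_b=O(a^{\alpha-1}b^{\alpha-1})$, with no diagonal singularity. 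By \eqref{majo_all_s} the remainder is at most $C|u_p-u_q|\big(r_p^{\alpha-1}r_q^{\alpha-1}+|r_p-r_q|^{2\alpha-2}\big)$, and $|u_p-u_q|\,|r_p-r_q|^{2\alpha-2}=r_j^{-1}|r_p-r_q|^{2\alpha-1}$ is harmless. Pairs with $q<j$ only produce singularities bounded by $(r_{j-1}-r_j)^{2\alpha-2}$, which are integrable in $r_j$. With this estimate your rescaling argument can be completed, and it then avoids the paper's boundary terms and case distinction. Note that the paper's own bound on $J_2$ in Case 2 through $\mathbf{A}(u)$ meets the same factor $(1-u_i)^{3\alpha-3}$ when $\ell_i=1$, so this cancellation is needed there too.

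Conversely, the ``main obstacle'' you single out is not one. For $i\notin A$ with $\alpha(i)\ge 1$ you have $r_{\alpha(i)}\ge r_{j-1}>r_j\ge r_ju_i$. Integrating $(r_{\alpha(i)}-r_i)^{\alpha-2}$ over $r_i\in(0,r_{i-1})$ then gives at most $(r_{\alpha(i)}-r_{i-1})^{\alpha-1}/(1-\alpha)$, which is integrable in the next variable; this is exactly how the paper handles $I_3$ and $J_3$. Your proposed integration by parts would move the derivative onto $\phi$ and create more terms of the problematic kind. Also, \eqref{IPP} plays no role here: it is the integration by parts in $r_j$ in the analysis of $\mathcal{E}_{2,2}$, which uses this proposition as input.
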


 We now make an integration by part with respect to $r_j$ that we justify afterwards:
 \begin{align}
 &\int_0^{r_{j-1}} G_j(r_1,\dots,r_j) \left( \D_{r_{j}}X_{\eta(r_{\alpha(j)})} -\D_{r_{j}}\xcn_{\eta(r_{\alpha(j)})}  \right) dr_j \notag\\
 &= G_j(r_1,\dots,r_{j-1},r_{j-1}) \int_0^{r_{j-1}} \left( \D_{s}X_{\eta(r_{\alpha(j)})} -\D_{s}\xcn_{\eta(r_{\alpha(j)})}  \right) ds  \label{IPP} \\
 &\ \ -\int_0^{r_{j-1}} \partial_{j} G_j(r_1,\dots,r_j)  \int_0^{r_j} \left( \D_{s}X_{\eta(r_{\alpha(j)})} -\D_{s}\xcn_{\eta(r_{\alpha(j)})}  \right) ds dr_j. \notag
 \end{align}
In fact, we can write by~\eqref{der_Malliavin_X} $G_j$ as an infinite sum of terms that can be written as in Proposition~\ref{prop_E22}, and using this Proposition, we have $\int_0^{r_{j-1}} |\partial_{j} G_j(r_1,\dots,r_j) |dr_j \le C$ for a constant $C$ that depends on $\phi$ but does not depend on $n$. Of course, this justifies the integration by parts.  We observe also that $G_j$ is bounded. Therefore, we get from~\eqref{IPP} and Lemma~\ref{lem_err_Malliavin3}
 \begin{align*}
 &\left|\int_0^{r_{j-1}} G_j(r_1,\dots,r_j) \left( \D_{r_{j}}X_{\eta(r_{\alpha(j)})} -\D_{r_{j}}\xcn_{\eta(r_{\alpha(j)})}  \right) dr_j \right| \le C \frac{T}{n}.
 \end{align*}
 We conclude that $|\mathcal{E}_{2,2,j}|\le   C \frac{T}{n}$ by using Lemma~\ref{lem_err_Malliavin3} for the integration from $dr_{j-1}$ to $dr_1$. This leads to $|\mathcal{E}_{2,2}|\le   C \vn$. This concludes the proof of Theorem~\ref{thm_main}

\appendix 
\section{Technical results}

We first recall that there exists $C_\alpha>0$ such that
\begin{equation}\label{eq_LiZeng}
  (i+1)^\alpha - i^\alpha \le C_\alpha i^{\alpha-1}, \ (i+1)^\alpha - i^\alpha \le C_\alpha (i+1)^{\alpha-1}, \ i\in \N,
\end{equation}
and refer to~\cite[Lemma 3.1]{LiZe} for a proof (note that $1\le \frac{i+1}{i} \le 2$ for $i\ge 2$, so that the first inequality implies the second one, and the second one is trivially satisfied for $i=0$). We present a useful lemma, which is an adaptation to our context of~\cite[Lemma 3.2]{LiZe}.

\begin{lemma}\label{lemma_beta_disc2}
  Let $\alpha,\beta \in (0,1)$. Then, we have for all $k\ge 1$
$$\sum_{i=1}^k (k-(i-1))^{\beta-1}i^{\alpha-1}\le   \frac{\Gamma(\alpha)\Gamma(\beta)}{\Gamma(\alpha+\beta)}  k^{\alpha + \beta-1}.$$
\end{lemma}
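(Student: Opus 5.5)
The idea is to compare the discrete sum with the Beta integral $\int_0^k (k-x)^{\beta-1}x^{\alpha-1}\,dx=\frac{\Gamma(\alpha)\Gamma(\beta)}{\Gamma(\alpha+\beta)}k^{\alpha+\beta-1}$, which follows from \eqref{Beta_identity} after multiplying by $\Gamma(\alpha)\Gamma(\beta)$. I will bound each summand by the integral of the integrand over an interval of length one and then sum.

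For $1\le i\le k$, consider the interval $[i-1,i]$ and $x\in[i-1,i]$. Since $\alpha-1<0$, the map $x\mapsto x^{\alpha-1}$ is decreasing on $(0,\infty)$. Hence $x\le i$ gives $x^{\alpha-1}\ge i^{\alpha-1}$. Likewise $x\ge i-1$ gives $k-x\le k-(i-1)$, and since $\beta-1<0$ this yields $(k-x)^{\beta-1}\ge (k-(i-1))^{\beta-1}$. Both factors are positive on the open interval, so
\[
(k-(i-1))^{\beta-1}i^{\alpha-1}\le \int_{i-1}^{i}(k-x)^{\beta-1}x^{\alpha-1}\,dx .
\]

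The integrand has integrable singularities at $x=0$ (order $\alpha-1>-1$) and at $x=k$ (order $\beta-1>-1$). These occur only at the endpoints of the first and last intervals, namely $i=1$ and $i=k$, so each of these integrals is finite. Summing over $i=1,\dots,k$, the intervals tile $[0,k]$, and the result is bounded by $\int_0^k (k-x)^{\beta-1}x^{\alpha-1}\,dx$.

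It remains to evaluate this integral. Substituting $x=ky$ gives $k^{\alpha+\beta-1}\int_0^1(1-y)^{\beta-1}y^{\alpha-1}\,dy=k^{\alpha+\beta-1}\frac{\Gamma(\alpha)\Gamma(\beta)}{\Gamma(\alpha+\beta)}$, which is the claimed bound.

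The only delicate point is the direction of the shift in the first factor. Writing $k-(i-1)$ rather than $k-i$ is exactly what makes both factors dominated from the same side on $[i-1,i]$: the first uses the left endpoint and the second the right endpoint. This is why the comparison works without any extra constant, and no real obstacle is expected.
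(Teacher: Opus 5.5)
Your proposal is correct and is essentially the paper's proof: the paper rescales by $k$ first and compares each term with $\int_{(i-1)/k}^{i/k}(1-x)^{\beta-1}x^{\alpha-1}\,dx$, using the same monotonicity of $x\mapsto x^{\alpha-1}$ and $x\mapsto x^{\beta-1}$. You do the same comparison on $[i-1,i]$ and rescale afterwards, which changes nothing.
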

\begin{proof}
We write
$$\sum_{i=1}^k (k-(i-1))^{\beta-1}i^{\alpha-1}=k^{\alpha+\beta-1} \frac 1 k \sum_{i=1}^k \left(1-\frac{i-1}{k} \right)^{\beta-1}\left(\frac{i}{k}\right)^{\alpha-1}, $$ 
and observe that $\int_{\frac{i-1}{k}}^{\frac{i}{k}} (1-x)^{\beta-1} x^{\alpha-1}dx \ge \frac 1 k \left(1-\frac{i-1}{k} \right)^{\beta-1}\left(\frac{i}{k}\right)^{\alpha-1} $ since $x\mapsto x^{\alpha-1}$ and $x\mapsto x^{\beta-1}$ are positive decreasing functions. Therefore,
$$ \sum_{i=1}^k (k-(i-1))^{\beta-1}i^{\alpha-1}\le k^{\alpha+\beta-1} \int_0^1 (1-x)^{\beta-1} x^{\alpha-1}dx =\frac{\Gamma(\alpha)\Gamma(\beta)}{\Gamma(\alpha+\beta)}  k^{\alpha + \beta-1}. \qedhere$$
\end{proof}

\begin{lemma}\label{lemma_beta_disc}
  Let $T>0$, $n\ge 1$. Let $t_k=kT/n$ for $k\in \{0,\dots, n\}$. Let $\alpha,\beta \in (0,1)$.
Then, there exists a constant~$C_{\alpha,\beta}$ that only depends on $\alpha$ and $\beta$ such that for any $k\le n$, 
\begin{equation*}
  \sum_{i=1}^k \left((t_{k}-t_{i-1})^\beta - (t_{k}-t_{i})^\beta \right)\left( t_i^\alpha-t_{i-1}^\alpha \right)\le C_{\alpha,\beta} \left( \frac T n \right)^{\alpha+\beta} k^{\alpha+\beta-1}.
\end{equation*}
\end{lemma}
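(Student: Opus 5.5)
The bound should follow from Lemma~\ref{lemma_beta_disc2} once each factor is controlled by the mean value theorem. The point is to pick, in each factor, the endpoint that keeps the index structure $(k-(i-1))^{\beta-1}i^{\alpha-1}$ required by that lemma.

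First factor out the scaling. Since $t_i=iT/n$, we have
\begin{equation*}
(t_k-t_{i-1})^\beta-(t_k-t_i)^\beta=\left(\frac Tn\right)^\beta\big((k-i+1)^\beta-(k-i)^\beta\big)
\end{equation*}
and
\begin{equation*}
t_i^\alpha-t_{i-1}^\alpha=\left(\frac Tn\right)^\alpha\big(i^\alpha-(i-1)^\alpha\big).
\end{equation*}
The claim therefore reduces to the purely discrete inequality
\begin{equation*}
\sum_{i=1}^k \big((k-i+1)^\beta-(k-i)^\beta\big)\big(i^\alpha-(i-1)^\alpha\big)\le C_{\alpha,\beta}\,k^{\alpha+\beta-1}.
\end{equation*}

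Next bound each increment by \eqref{eq_LiZeng}, which holds for any exponent in $(0,1)$ and hence also for $\beta$. For the $\alpha$-factor, apply the second inequality with index $i-1\in\N$ to get $i^\alpha-(i-1)^\alpha\le C_\alpha i^{\alpha-1}$; this is valid for $i=1$ as well. For the $\beta$-factor, apply the second inequality with index $k-i\in\N$ to get $(k-i+1)^\beta-(k-i)^\beta\le C_\beta (k-i+1)^{\beta-1}=C_\beta(k-(i-1))^{\beta-1}$; this is valid even when $i=k$. In both cases we are using the bound $u^{\gamma-1}$ evaluated at the larger endpoint, which is what avoids a singular term at $i=k$ or $i=1$.

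The sum is then bounded by $C_\alpha C_\beta\sum_{i=1}^k (k-(i-1))^{\beta-1}i^{\alpha-1}$. Lemma~\ref{lemma_beta_disc2} bounds this by $C_\alpha C_\beta\frac{\Gamma(\alpha)\Gamma(\beta)}{\Gamma(\alpha+\beta)}k^{\alpha+\beta-1}$. Multiplying back by $(T/n)^{\alpha+\beta}$ gives the statement with $C_{\alpha,\beta}=C_\alpha C_\beta\Gamma(\alpha)\Gamma(\beta)/\Gamma(\alpha+\beta)$.

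The only delicate step is the endpoint choice just described. With the other endpoint (the first inequality of \eqref{eq_LiZeng}), $(k-i)^{\beta-1}$ would be infinite at $i=k$, and the indices would no longer match the form required by Lemma~\ref{lemma_beta_disc2}.
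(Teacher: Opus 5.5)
Your proposal is correct and follows essentially the same route as the paper. Like the paper, you scale out $(T/n)^{\alpha+\beta}$, bound both increments with the endpoint form of \eqref{eq_LiZeng} to reach $\sum_{i=1}^k (k-(i-1))^{\beta-1}i^{\alpha-1}$, and then conclude with Lemma~\ref{lemma_beta_disc2}.
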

\begin{proof}
We have by using~\eqref{eq_LiZeng}
\begin{align*}
  &\sum_{i=1}^k \left((t_{k}-t_{i-1})^\beta - (t_{k}-t_{i})^\beta \right)\left( t_i^\alpha-t_{i-1}^\alpha \right) \\
  =&\left( \frac T n \right)^{\alpha+\beta} \sum_{i=1}^k \left((k-(i-1))^\beta - (k-i)^\beta \right)\left( i^\alpha-(i-1)^\alpha \right) \\
  \le& C_\alpha C_\beta \left( \frac T n \right)^{\alpha+\beta} \sum_{i=1}^k (k-(i-1))^{\beta-1}i^{\alpha-1}\le C_\alpha C_\beta \frac{\Gamma(\alpha)\Gamma(\beta)}{\Gamma(\alpha+\beta)} \left( \frac T n \right)^{\alpha+\beta} k^{\alpha+\beta-1},
\end{align*}
by Lemma~\ref{lemma_beta_disc2}.
\end{proof}

We now present a lemma that is the analogous of~\cite[Proposition 4.1]{Gassiat} in our framework.

\begin{lemma}\label{lem:covariance}
  Let $\alpha \in (1/2,1)$ and  $T>0$. There exists a constant $C$ depending on $T$, $\sigma$, $\kappa_2$ and $\alpha$ such that
\begin{equation}\label{cov_increment}
  \forall s,t \in [0,T], \ |\covX(s,t)-\covX(t,t)| \le C |t-s|^{2\alpha-1}.  
\end{equation}
  Moreover, the function $t\mapsto \covX(t,s)$ is differentiable on $(0,T)$ for $t\not= s$, and there exists a constant $C$ depending on $T$, $\sigma$, $\kappa_2$ and $\alpha$ such that
  \begin{equation}\label{majo_all_s}
    \forall s,t \in (0,T), s\not=t, \ |\partial_1 \covX(t,s)|\le C \left(s^{\alpha-1}t^{\alpha-1}+  \left|s-t \right|^{2 \alpha-2}\right).
  \end{equation}
\end{lemma}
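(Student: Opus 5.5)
We work directly with the kernel representation from Proposition~\ref{prop_OUexplicit}. Writing $k_i(u)=\sigma\kappa_2^{i-1}u^{i\alpha-1}/\Gamma(i\alpha)$ for $u>0$, the Itô isometry gives
\begin{equation*}
\covX(s,t)=\sum_{i_1,i_2\ge 1}\int_0^{s\wedge t} k_{i_1}(s-u)k_{i_2}(t-u)\,du .
\end{equation*}
All estimates will be done term by term, and then summed. The summation is harmless: each term carries a factor $|\kappa_2|^{i_1+i_2-2}T^{(i_1+i_2)\alpha}/(\Gamma(i_1\alpha)\Gamma(i_2\alpha))$, and the series of these factors converges. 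The only delicate pair is $i_1=i_2=1$, because for $i\ge2$ we have $i\alpha-1>0$ and $k_i$ is bounded and Hölder on $[0,T]$. For a pair involving some $i\ge2$, the resulting bounds are better than the claimed ones, since $|t-s|\le T^{2-2\alpha}|t-s|^{2\alpha-1}$.

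\textbf{Increment bound \eqref{cov_increment}.} By symmetry of the covariance it suffices to bound $\covX(s,t)-\covX(t,t)$, treating $s<t$ and $s>t$ separately. The step is to write
\begin{equation*}
\covX(s,t)-\covX(t,t)=\E[(X_s-X_t)X_t]=\E[(\cY_s-\cY_t)\cY_t],
\end{equation*}
and then apply Cauchy--Schwarz. This reduces the claim to the classical estimate $\E[(\cY_t-\cY_s)^2]\le C|t-s|^{2\alpha-1}$. For the leading kernel this is the standard computation
\begin{equation*}
\int_0^s\bigl((t-u)^{\alpha-1}-(s-u)^{\alpha-1}\bigr)^2du+\int_s^t(t-u)^{2\alpha-2}du\le C|t-s|^{2\alpha-1}.
\end{equation*}
For $i\ge2$, $k_i$ is $(i\alpha-1)\wedge 1$-Hölder and bounded, which gives even better bounds. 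Since $\sup_t\E[\cY_t^2]<\infty$, Cauchy--Schwarz yields only $|t-s|^{\alpha-1/2}$, which is not enough. We therefore compute $\E[(\cY_s-\cY_t)\cY_t]$ directly via Itô isometry instead. For $s<t$ it equals
\begin{equation*}
\sum\int_0^s\bigl(k_{i_1}(s-u)-k_{i_1}(t-u)\bigr)k_{i_2}(t-u)\,du-\sum\int_s^t k_{i_1}(t-u)k_{i_2}(t-u)\,du .
\end{equation*}
The second sum is $O((t-s)^{2\alpha-1})$. For the first, with the leading kernels, we substitute $v=s-u$ and $h=t-s$ to get
\begin{equation*}
\int_0^s\bigl(v^{\alpha-1}-(v+h)^{\alpha-1}\bigr)(v+h)^{\alpha-1}\,dv\le\int_0^\infty\bigl(v^{\alpha-1}-(v+h)^{\alpha-1}\bigr)(v+h)^{\alpha-1}\,dv=c\,h^{2\alpha-1},
\end{equation*}
and the last integral is finite because $2\alpha-3<-1$. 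The case $s>t$ is handled analogously.

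\textbf{Derivative bound \eqref{majo_all_s}.} Fix $s$ and differentiate $t\mapsto\covX(t,s)=\sum\int_0^{t\wedge s}k_{i_1}(t-u)k_{i_2}(s-u)\,du$.

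For $t<s$, the Leibniz rule gives a boundary term $k_{i_1}(0^+)k_{i_2}(s-t)$, which is problematic when $i_1=1$. We avoid it by substituting $u=t-v$, which moves the singularity to a fixed point:
\begin{equation*}
\covX(t,s)=\sum\int_0^t k_{i_1}(v)k_{i_2}(s-t+v)\,dv .
\end{equation*}
Its $t$-derivative is $k_{i_1}(t)k_{i_2}(s)+\int_0^t k_{i_1}(v)k_{i_2}'(s-t+v)\,dv$. The first term is $\le Ct^{\alpha-1}s^{\alpha-1}$. For the leading kernel, the integral is bounded by
\begin{equation*}
\int_0^t v^{\alpha-1}(s-t+v)^{\alpha-2}dv\le\int_0^\infty v^{\alpha-1}(h+v)^{\alpha-2}dv=c\,h^{2\alpha-2},\qquad h=s-t .
\end{equation*}

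For $t>s$, the function is $\int_0^s k_{i_1}(t-u)k_{i_2}(s-u)\,du$ with no moving upper limit. Its derivative is $\int_0^s k_{i_1}'(t-u)k_{i_2}(s-u)\,du$, bounded for the leading kernels by $\int_0^\infty(h+v)^{\alpha-2}v^{\alpha-1}dv=c\,h^{2\alpha-2}$.

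Differentiability itself follows by dominated convergence, with these integrable majorants holding locally uniformly in $t\neq s$. For $i\ge2$, $k_i'(u)=O(u^{i\alpha-2})$ with $i\alpha-2>-1$, so those terms are bounded.

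\textbf{Main obstacle.} The hardest part will be handling the moving singularity cleanly in the derivative bound. The key choices that make it work are the change of variables $u=t-v$ for $t<s$, and the control of the infinite series: the constants $c$ above grow like Beta functions in $i_1,i_2$, and these must be dominated by the $1/\Gamma(i\alpha)$ decay.
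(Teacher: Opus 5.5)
Your argument is correct, and it takes a genuinely different route from the paper.

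\textbf{Comparison with the paper.} The paper works with the closed-form series for $\covX$ in terms of ${}_2F_1(1-i_2\alpha,1;i_1\alpha+1,\cdot)$. It differentiates each term with the hypergeometric derivative formula and bounds all but finitely many terms by $\max_{z\in[0,1]}{}_2F_1$. The three singular pairs $(i_1,i_2)\in\{(1,1),(1,2),(2,1)\}$ are isolated through Euler's transformation, which exhibits the factors $(s-t)^{2\alpha-2}$ and $(s-t)^{3\alpha-2}$. The increment bound is then obtained by integrating the derivative bound.

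You instead stay with the kernel $\sum_i k_i$ and freeze the moving singularity by the substitution $u=t-v$. Everything then reduces to scaling integrals $\int_0^\infty v^{a}(h+v)^{b}\,dv=c\,h^{a+b+1}$. You prove the increment bound independently, by a direct It\^o-isometry computation.

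What each route buys:
\begin{itemize}
\item The paper's route produces hypergeometric closed forms that are reused later, e.g.\ in Section~\ref{Sec_cubic} and in Proposition~\ref{prop_E22}.
\item Your route is elementary and needs no special-function identities.
\item Your route is also sharper. For $t>s$ it gives $|\partial_1\covX(t,s)|\le C(t-s)^{2\alpha-2}$, with no $s^{\alpha-1}t^{\alpha-1}$ term.
\end{itemize}
The last point matters. Integrating the paper's derivative bound produces the term $t^{\alpha-1}|s^\alpha-t^\alpha|/\alpha$. This is $O(|s-t|^{2\alpha-1})$ when $t\ge |s-t|$, but it blows up when $s>t$ is fixed and $t\to 0$. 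So the paper's deduction of the increment bound needs an extra argument in that regime, whereas your direct computation has no such case.

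\textbf{Small inaccuracies to fix.}
\begin{itemize}
\item For $t<s$, the derivative of $\int_0^t k_{i_1}(v)k_{i_2}(s-t+v)\,dv$ is $k_{i_1}(t)k_{i_2}(s)-\int_0^t k_{i_1}(v)k_{i_2}'(s-t+v)\,dv$. The sign of the integral term is minus; this is harmless for bounds in absolute value.
\item It is not true that the terms where the derivative falls on some $k_i$ with $i\ge 2$ are bounded. Take the pair $(1,2)$ for $t<s$, or symmetrically $(2,1)$ for $t>s$. You get $\int_0^t v^{\alpha-1}(h+v)^{2\alpha-2}\,dv$, which tends to $\int_0^t v^{3\alpha-3}\,dv=\infty$ as $h\to 0$ when $\alpha\le 2/3$. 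These are exactly the paper's $(s-t)^{3\alpha-2}$ terms.
\item The fix is immediate. Whenever $i\alpha<2$ one has $(h+v)^{i\alpha-2}\le h^{i\alpha-2}\le T^{(i-2)\alpha}h^{2\alpha-2}$. So these terms are at most $C h^{2\alpha-2}$, and your conclusion stands.
\item Your opening remark suggests the mixed terms in the increment bound are Lipschitz. That is not quite right for $k_2$, which is only $(2\alpha-1)$-H\"older. This still gives $O(|t-s|^{2\alpha-1})$, which is enough.
\item Drop the Cauchy--Schwarz detour; it plays no role in the final argument.
\end{itemize}
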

Note that by symmetry we get from~\eqref{majo_all_s}
\begin{equation*}%\label{majo_all_t}
  \forall s,t \in (0,T), s\not=t, \ |\partial_2 \covX(t,s)|\le C \left(s^{\alpha-1}t^{\alpha-1}+  \left|s-t \right|^{2 \alpha-2}\right).
\end{equation*}  

\begin{proof}
The first part of the claim is an easy consequence of the second. Indeed, we have
\begin{align*}
  |\covX(s,t)-\covX(t,t)|&\le  \int_{(s,t)} |\partial_1 \covX(u,t)| du \\
  &\le C \int_{(s,t)} \left(u^{\alpha-1}t^{\alpha-1}+  \left|u-t \right|^{2 \alpha-2}\right) du \\
  &= C \left(\frac{t^{\alpha-1}}{\alpha}|s^\alpha-t^\alpha|+ \frac{|s-t|^{2\alpha-1}}{2\alpha-1}\right) \le C|s-t|^{2\alpha-1} ,
\end{align*}   
since $|s^\alpha-t^\alpha|\le |t-s|^\alpha \le  T^{1-\alpha} |t-s|^{2\alpha-1} $. This gives~\eqref{cov_increment}.

  From~\eqref{cov_formula}, we have
\begin{align*}
  \covX(t,s)&=\sigma^2 \sum_{i_1,i_2=1}^\infty \kappa_2^{i_1+i_2-2} \frac{t^{i_1\alpha} s^{i_2 \alpha-1}}{\Gamma(i_1 \alpha+1)\Gamma(i_2 \alpha)}  {_2F_1}(1-i_2 \alpha,1;i_1 \alpha+1,\frac{t}{s}), \ t\le s,\\
\covX(t,s)&=\sigma^2 \sum_{i_1,i_2=1}^\infty \kappa_2^{i_1+i_2-2} \frac{s^{i_1\alpha } t^{i_2\alpha-1}}{\Gamma(i_1 \alpha+1)\Gamma(i_2 \alpha)} {_2F_1}(1-i_2 \alpha,1;i_1 \alpha+1,\frac{s}{t}), \ s \le t.
\end{align*}
% Let us note that for $z \in [0,1]$, ${_2F_1}(1-i_2 \alpha,1;i_1 \alpha+1,z)=\frac{\Gamma(i_1 \alpha+1)}{\Gamma(i_1\alpha )} \int_0^1(1-t)^{i_1 \alpha-1}(1-zt)^{i_2\alpha-1} dt$ by~\cite[15.6.1 and 15.1.2]{Daalhuis}. If $i_2\ge 2$, we have $i_2\alpha-1\ge 1$ and thus  ${_2F_1}(1-i_2 \alpha,1;i_1 \alpha+1,z)\le
% \int_0^1 i_1 \alpha(1-t)^{i_1 \alpha-1} dt =1$. If $i_2=1$, then ${_2F_1}(1-i_2 \alpha,1;i_1 \alpha+1,z)\le
% \int_0^1 i_1 \alpha(1-t)^{(i_1+1) \alpha-2} dt =\frac{i_1\alpha}{(i_1+1) \alpha-1} \le \frac{\alpha}{2\alpha -1}$ for $i_1\ge 1$. Therefore,
% $$\sup_{i_1,i_2 \ge 1}{_2F_1}(1-i_2 \alpha,1;i_1 \alpha+1,z) \le \frac{\alpha}{2\alpha -1}.$$
We can differentiate and get for $0<t < s$ by using~\cite[15.5.1]{Daalhuis}:
\begin{align*}
\partial_1\covX(t,s)=&\sigma^2 \sum_{i_1,i_2=1}^\infty \kappa_2^{i_1+i_2-2}  \frac{t^{i_1\alpha-1} s^{i_2 \alpha-1}}{\Gamma(i_1 \alpha)\Gamma(i_2 \alpha)}  {_2F_1}(1-i_2 \alpha,1;i_1 \alpha+1,\frac{t}{s}) \\
  &+\sigma^2 \sum_{i_1,i_2=1}^\infty \kappa_2^{i_1+i_2-2} \frac{t^{i_1\alpha} s^{i_2 \alpha-2}}{\Gamma(i_1 \alpha+1)\Gamma(i_2 \alpha)}   \frac{1-i_2 \alpha}{i_1 \alpha +1} {_2F_1}(2-i_2 \alpha,2;i_1 \alpha+2,\frac{t}{s}).
\end{align*}
By~\eqref{def_2F1}, the maximum of $_2F_1(a,b;c,z)$ on $z\in[0,1]$ is reached by $z=0$ if $a\le 0$ and $z=1$ if $a>0$.  We get by~\cite[15.4.20]{Daalhuis}:
\begin{equation}\label{max_confluent}\max_{z\in [0,1]}\ _2F_1(a,b;c,z)=1_{a\le 0}+1_{a> 0} \frac{\Gamma(c)\Gamma(c-a-b)}{\Gamma(c-a)\Gamma(c-b)}.
\end{equation}  
Therefore, we have
\begin{align*}
  &\left|\sigma^2 \sum_{i_1,i_2=1}^\infty \kappa_2^{i_1+i_2-2} \frac{t^{i_1\alpha-1} s^{i_2 \alpha-1}}{\Gamma(i_1 \alpha)\Gamma(i_2 \alpha)}  {_2F_1}(1-i_2 \alpha,1;i_1 \alpha+1,\frac{t}{s})\right| \\
  & \le s^{\alpha-1}t^{\alpha-1} \sigma^2 \sum_{i_1,i_2=1}^\infty |\kappa_2|^{i_1+i_2-2} \frac{t^{(i_1-1)\alpha} s^{(i_2-1) \alpha}}{\Gamma(i_1 \alpha)\Gamma(i_2 \alpha)} \max_{z\in [0,1]}\ _2F_1(1-i_2 \alpha,1;i_1 \alpha+1,z)  \\
  &\le s^{\alpha-1}t^{\alpha-1}  \sigma^2 \sum_{i_1,i_2=1}^\infty |\kappa_2|^{i_1+i_2-2}  \frac{T^{(i_1+i_2-2) \alpha}}{\Gamma(i_1 \alpha)\Gamma(i_2 \alpha)} \max_{z\in [0,1]}\ _2F_1(1-i_2 \alpha,1;i_1 \alpha+1,z) \\
  &=: C^1_T s^{\alpha-1}t^{\alpha-1}.
\end{align*}
For the second series, we first observe that $_2F_1(2-i_2 \alpha,2;i_1 \alpha+2,1)<\infty$ if $(i_1+i_2) \alpha-2>0$ (which holds if $i_1+i_2\ge 4$) by~\eqref{max_confluent} and  have 
\begin{align*}
  &\left|\sigma^2 \sum_{i_1+i_2\ge 4}^\infty \kappa_2^{i_1+i_2-2} \frac{t^{i_1\alpha} s^{i_2 \alpha-2}}{\Gamma(i_1 \alpha+1)\Gamma(i_2 \alpha)}   \frac{1-i_2 \alpha}{i_1 \alpha +1} {_2F_1}(2-i_2 \alpha,2;i_1 \alpha+2,\frac{t}{s})\right|
  \\
  &\le \sigma^2 s^{\alpha-1}t^{\alpha-1} \sum_{i_1+i_2\ge 4}^\infty |\kappa_2|^{i_1+i_2-2} \frac{t^{(i_1-1)\alpha+1} s^{(i_2-1) \alpha-1}}{\Gamma(i_1 \alpha+1)\Gamma(i_2 \alpha)}   \frac{|1-i_2 \alpha|}{i_1 \alpha +1} {_2F_1}(2-i_2 \alpha,2;i_1 \alpha+2,\frac{t}{s})\\
  &\le \sigma^2 s^{\alpha-1}t^{\alpha-1} \sum_{i_1+i_2\ge 4}^\infty |\kappa_2|^{i_1+i_2-2} \frac{ T^{(i_1+i_2-2)\alpha}}{\Gamma(i_1 \alpha+1)\Gamma(i_2 \alpha)}  \frac{|1-i_2 \alpha|}{i_1 \alpha +1} \max_{z\in [0,1]}\  {_2F_1}(2-i_2 \alpha,2;i_1 \alpha+2,z)\\
  &=:C^2_T s^{\alpha-1}t^{\alpha-1},
\end{align*}
 using $t\le s\le T$ for the last inequality.

We now focus on the three other terms of the second series, i.e. $i_1=i_2=1$; $i_1=1$ and $i_2=2$; $i_2=1$ and $i_1=2$. 
To analyse these terms, we use \cite[Formula 15.8.1]{Daalhuis}
$_2F_1(2-i_2 \alpha,2;i_1 \alpha+2,\frac{t}{s})=(1-\frac{t}{s})^{(i_1+i_2) \alpha-2}\ _2F_1((i_1+i_2) \alpha,i_1 \alpha;i_1 \alpha+2,\frac{t}{s})$. 

\noindent $\bullet$ The term obtained with $i_1=i_2=1$ is:
$$\frac{\sigma^2 (1-\alpha)}{1+\alpha} \frac{t^{\alpha} s^{\alpha-2}}{\Gamma(1+\alpha)\Gamma(\alpha)} \left(1-\frac{t}{s}\right)^{2 \alpha-2} \ _2F_1(2 \alpha, \alpha;\alpha+2,\frac{t}{s}).$$
$\bullet$ The term obtained with $i_1=1$, $i_2=2$ is:
$$\frac{\sigma^2 \kappa_2 (1-2\alpha)}{1+\alpha} \frac{t^\alpha s^{2\alpha-2}}{\Gamma(1+\alpha)\Gamma(2\alpha)} \left(1-\frac{t}{s}\right)^{3 \alpha-2} \ _2F_1(3 \alpha, \alpha;\alpha+2,\frac{t}{s}).$$
$\bullet$ The term obtained with $i_1=2$, $i_2=1$ is:
$$\frac{\sigma^2 \kappa_2(1-\alpha)}{1+2\alpha} \frac{t^{2\alpha}s^{\alpha-2}}{\Gamma(1+2\alpha)\Gamma(\alpha)} \left(1-\frac{t}{s}\right)^{3 \alpha-2} \ _2F_1(3 \alpha, 2\alpha;2\alpha+2,\frac{t}{s}).$$
By~\eqref{max_confluent}, the sum of these three terms can be upper bounded for all $0< t< s \le T$, up to a multiplicative constant (depending on $\alpha$, $\sigma$ and $\kappa_2$), by 
$$ (s-t)^{2\alpha-2}+(s-t)^{3\alpha-2}\le (1+T^\alpha)(s-t)^{2\alpha-2}.$$
Finally, there exists a constant $C_T$ such that
\begin{equation}\label{majo_s_smaller}
  \forall 0<t< s \le T, \  |\partial_1\covX(t,s)|\le C_T \left(s^{2\alpha-2} +\left(s-t \right)^{2 \alpha-2}\right).
\end{equation}

Now, we turn to the case $0<s<t\le T$   and we get using~\cite[15.5.1]{Daalhuis}
\begin{align*}
  \partial_1\covX(t,s)=&\sigma^2 \sum_{i_1,i_2=1}^\infty \kappa_2^{i_1+i_2-2} (i_2 \alpha-1) \frac{s^{i_1\alpha} {t}^{i_2 \alpha -2}}{\Gamma(i_1 \alpha+1)\Gamma(i_2 \alpha)}  {_2F_1}(1-i_2 \alpha,1;i_1 \alpha+1,\frac{s}{t}) \\
  &+\sigma^2 \sum_{i_1,i_2=1}^\infty \kappa_2^{i_1+i_2-2}  \frac{i_2 \alpha-1}{i_1 \alpha +1} \frac{s^{i_1\alpha+1} {t}^{i_2 \alpha  -3}}{\Gamma(i_1 \alpha+1)\Gamma(i_2 \alpha)}  {_2F_1}(2-i_2 \alpha,2;i_1 \alpha+2,\frac{s}{t}).
\end{align*}
For the first series, we have
\begin{align*}
&\left|\sigma^2 \sum_{i_1,i_2=1}^\infty \kappa_2^{i_1+i_2-2} (i_2\alpha-1) \frac{s^{i_1\alpha} {t}^{i_2 \alpha -2 }}{\Gamma(i_1 \alpha+1)\Gamma(i_2 \alpha)}  {_2F_1}(1-i_2 \alpha,1;i_1 \alpha+1,\frac{s}{t})\right|\\&\le 
\sigma^2 \sum_{i_1,i_2=1}^\infty |\kappa_2|^{i_1+i_2-2} |i_2\alpha-1| \frac{s^{i_1\alpha} {t}^{i_2 \alpha -2 }}{\Gamma(i_1 \alpha+1)\Gamma(i_2 \alpha)} \max_{z\in [0,1]}\ {_2F_1}(1-i_2 \alpha,1;i_1 \alpha+1,z)\\&
\le 
\sigma^2 s^{\alpha-1}t^{\alpha-1} \sum_{i_1,i_2=1}^\infty |\kappa_2|^{i_1+i_2-2} |i_2\alpha-1| \frac{T^{(i_1+i_2-2)\alpha}}{\Gamma(i_1 \alpha+1)\Gamma(i_2 \alpha)} \max_{z\in [0,1]}\ {_2F_1}(1-i_2 \alpha,1;i_1 \alpha+1,z) \\&
=:\tilde C^1_T s^{\alpha-1}t^{\alpha-1}.
\end{align*}
For the second series,  using the same arguments as for the first part of the proof, we have
\begin{align*}
 & \sigma^2\left|\sum_{i_1+i_2\ge 4}^\infty 
 \kappa_2^{i_1+i_2-2}  \frac{i_2 \alpha-1}{i_1 \alpha +1} \frac{s^{i_1\alpha+1} {t}^{i_2 \alpha  -3}}{\Gamma(i_1 \alpha+1)\Gamma(i_2 \alpha)}  {_2F_1}(2-i_2 \alpha,2;i_1 \alpha+2,\frac{s}{t})
\right| \\
&\le 
\sigma^2 s^{\alpha-1}t^{\alpha-1}  \sum_{i_1+i_2\ge 4}^\infty |\kappa_2|^{i_1+i_2-2}  \frac{|i_2 \alpha-1|}{i_1 \alpha +1} \frac{s^{(i_1-1)\alpha+2} t^{(i_2-1) \alpha  -2}}{\Gamma(i_1 \alpha+1)\Gamma(i_2 \alpha)}  \max_{z\in [0,1]}\  {_2F_1}(2-i_2 \alpha,2;i_1 \alpha+2,z)\\
&\le 
\sigma^2  s^{\alpha-1}t^{\alpha-1} \sum_{i_1+i_2\ge 4}^\infty \kappa_2^{i_1+i_2-2}  \frac{|i_2 \alpha-1|}{i_1 \alpha +1} \frac{T^{(i_1+i_2-2)\alpha} }{\Gamma(i_1 \alpha+1)\Gamma(i_2 \alpha)}  \max_{z\in [0,1]}\  {_2F_1}(2-i_2 \alpha,2;i_1 \alpha+2,z)\\
  &=:\tilde C^2_T s^{\alpha-1}t^{\alpha-1} .
\end{align*}
Now the remaining terms are given by~\cite[Formula 15.8.1]{Daalhuis}\\
$\bullet$ for $i_1=i_2=1$ is 
$$
\sigma^2   \frac{ \alpha-1}{\alpha +1} \frac{s^{\alpha+1} {t}^{\alpha-3}}{\Gamma(\alpha+1)\Gamma(\alpha)} \left(1-\frac s{t}\right)^{2\alpha-2} {_2F_1}(2\alpha,\alpha; \alpha+2,\frac{s}{t})
$$
$\bullet$ for $i_1=1$ $i_2=2$ is
$$
\kappa_2  \frac{2 \alpha-1}{ \alpha +1} \frac{s^{\alpha+1} {t}^{2\alpha-3}}{\Gamma( \alpha+1)\Gamma(2\alpha)}  \left(1-\frac s{t}\right)^{3\alpha-2} {_2F_1}(3\alpha,\alpha; \alpha+2,\frac{s}{t})
$$
$\bullet$ for $i_1=2$ $i_2=1$ is
$$
 \kappa_2  \frac{\alpha-1}{2 \alpha +1} \frac{s^{2\alpha+1} {t}^{\alpha-3}}{\Gamma(2 \alpha+1)\Gamma(\alpha)}   \left(1-\frac st\right)^{3\alpha-2} {_2F_1}(3\alpha,2\alpha; 2\alpha+2,\frac{s}{t}).
 $$
 The sum of these three terms can be upper bounded for $0<s<t\le T$, up to a multiplicative constant (depending on $\alpha$, $\sigma$ and $\kappa_2$), by 
$$
(s-t)^{2\alpha-2}+(s-t)^{3\alpha-2}\le (1+T^\alpha)(s-t)^{2\alpha-2}.
$$
Combining this with~\eqref{majo_s_smaller}, we get~\eqref{majo_all_s}.
\end{proof}

\begin{prop}\label{prop_phi}
  Let $\Psi \in \mathcal{C}^2(\R^d, \R)$ such that   \begin{equation}\label{Bound_Psi}\exists C\in \R_+, \ \forall x \in \R^d,\ |\Psi(x)|+\sum_{k=1}^d |\partial_k \Psi(x)|+\sum_{k_1,k_2=1}^d |\partial_{k_1}\partial_{k_2} \Psi(x)| \le Ce^{C\sum_{i=1}^d|x_i|}.\end{equation}
Let $\phi$ be defined by
$$ \phi(r_1,\dots,r_d)=\E[\Psi(X_{r_1},\dots,X_{r_d})], \ r_1,\dots,r_d \in [0,T].$$  
   Then, $\phi$ is continuous and continuously differentiable on $\{(r_1,\dots,r_d) \in [0,T] :  r_1>\dots>r_d \}$. Moreover, there exists $C\in \R_+$ that depends on $T$ and $\Psi$ such that  $|\phi(r_1,\dots,r_d)|\le C$ and 
  \begin{enumerate}
      \item\label{item_1} $$|\phi(r_1,\dots,r_d)-\phi(\mathring{r}_1,\dots,\mathring{r}_d)|\le C\left(\sum_{i=1}^d |r_i-\mathring{r}_i|^{2\alpha-1} \right),$$
      \item       for $2\le i\le d$, $$|\partial_i \phi(r_1,\dots,r_d)|\le C \left((r_{i-1}-r_i)^{2\alpha-2}+(r_i-r_{i+1})^{2\alpha-2} \right),$$

      \item $$|\partial_1 \phi(r_1,\dots,r_d)|\le C (r_1-r_{2})^{2\alpha-2}, \ |\partial_d \phi(r_1,\dots,r_d)|\le C \left(r_d^{2\alpha-2}+(r_{d-1}-r_{d})^{2\alpha-2} \right).$$
  \end{enumerate}
\end{prop}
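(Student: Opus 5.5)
The approach is to reduce everything to Gaussian interpolation, since $(X_{r_1},\dots,X_{r_d})$ is a Gaussian vector with mean $\mu(\rb)=(\E[X_{r_i}])_i$ and covariance matrix $\Sigma(\rb)=(\covX(r_i,r_j))_{i,j}$.

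\textbf{Boundedness and continuity.} By Proposition~\ref{prop_esp}, each mean $\E[X_{r_i}]$ is bounded on $[0,T]$ and continuous. By~\eqref{cov_formula}, each covariance entry is bounded on $[0,T]^2$, and it is continuous by~\eqref{cov_increment}. The exponential growth bound~\eqref{Bound_Psi} on $\Psi$ then gives $|\phi|\le C$ uniformly in $\rb$: one uses $\E[e^{C\sum|X_{r_i}|}]\le \prod_i\E[e^{dC|X_{r_i}|}]^{1/d}$ together with the uniformly bounded Gaussian moments. Continuity of $\phi$ follows by dominated convergence, since Gaussian laws depend continuously on their parameters.

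\textbf{Item (1).} I would apply the Gaussian interpolation identity already used in the proof of Theorem~\ref{thm_covariance_error} (Lemma~\ref{lem_weak_ito} and Corollary~\ref{cor_weak_ito}), in its multivariate form. Set $m(u)=(1-u)\mu(\rb)+u\mu(\mathring\rb)$ and $\Sigma(u)=(1-u)\Sigma(\rb)+u\Sigma(\mathring\rb)$. This gives
\[
\phi(\mathring\rb)-\phi(\rb)=\int_0^1\Big(\sum_i \Delta\mu_i\,\E[\partial_i\Psi(Z_u)]+\tfrac12\sum_{i,j}\Delta\Sigma_{ij}\,\E[\partial_i\partial_j\Psi(Z_u)]\Big)du,
\]
where $Z_u\sim\mathcal N(m(u),\Sigma(u))$. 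The matrix $\Sigma(u)$ is positive semidefinite as a convex combination, so the expectations are bounded uniformly exactly as above.

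It remains to bound the parameter increments. For the means, $|\E[X_{r}]-\E[X_{\mathring r}]|\le C|r-\mathring r|^\alpha\le C|r-\mathring r|^{2\alpha-1}$, since $\E[X_t]=x_0E_\alpha(\kappa_2t^\alpha)+\dots$ is $\alpha$-H\"older. For the covariances, $|\covX(r_i,r_j)-\covX(\mathring r_i,\mathring r_j)|$ is split into two one-variable increments. Each of these is controlled by integrating~\eqref{majo_all_s} in a single variable: the term $s^{\alpha-1}u^{\alpha-1}$ integrates to something $\alpha$-H\"older, and the term $|u-s|^{2\alpha-2}$ integrates to at most $C|\cdot|^{2\alpha-1}$, because $x\mapsto|x|^{2\alpha-2}$ is locally integrable and the integral of a decreasing power over an interval of length $h$ is maximal when the interval is adjacent to the singularity. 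This yields (1).

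\textbf{Items (2)--(3).} On the open simplex, the means and covariances are $C^1$ in each $r_i$, because there all distinct times are distinct (Lemma~\ref{lem:covariance}; the means are smooth for $r>0$ by Proposition~\ref{prop_esp}). I would differentiate the Gaussian expectation with respect to its parameters. This is legitimate by the heat-equation/Price identity $\partial_{\Sigma_{ij}}\E[\Psi(Z)]=c_{ij}\E[\partial_i\partial_j\Psi(Z)]$, which holds even for degenerate $\Sigma$ since we only use $C^2$ of $\Psi$; a smoothing-and-limit argument justifies it. This gives
\[
\partial_i\phi=\sum_k\partial_{r_i}\mu_k\,\E[\partial_k\Psi]+\tfrac12\sum_{k,l}\partial_{r_i}\Sigma_{kl}\,\E[\partial_k\partial_l\Psi].
\]
Only the entries involving index $i$ depend on $r_i$. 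Their derivatives are $\partial_1\covX(r_i,r_j)$ for $j\ne i$, bounded by~\eqref{majo_all_s} by $C(r_i^{\alpha-1}r_j^{\alpha-1}+|r_i-r_j|^{2\alpha-2})$, together with the diagonal derivative $\frac{d}{dr}\Var(X_r)=\sigma^2 r^{2\alpha-2}E^2_{\alpha,\alpha}(\kappa_2r^\alpha)$, which is $O(r^{2\alpha-2})$. The mean derivative is $O(r^{\alpha-1})$.

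\textbf{Main obstacle.} The hardest step is matching the precise form stated in (2)--(3). The singular terms $r_i^{\alpha-1}r_j^{\alpha-1}$, $r_i^{2\alpha-2}$ and $r_i^{\alpha-1}$ must be absorbed into $(r_{i-1}-r_i)^{2\alpha-2}+(r_i-r_{i+1})^{2\alpha-2}$ for $i<d$, and into $r_d^{2\alpha-2}$ for $i=d$. The same absorption is needed for the sums over all $j$: one must reduce $|r_i-r_j|^{2\alpha-2}$ to the nearest neighbour.

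The ordering $r_1>\dots>r_d$ handles the second point. For $j<i$ we have $r_j-r_i\ge r_{i-1}-r_i$, and for $j>i$ we have $r_i-r_j\ge r_i-r_{i+1}$; since $2\alpha-2<0$, each $|r_i-r_j|^{2\alpha-2}$ is dominated by the nearest-neighbour gap. For the first point, if $i<d$ then $r_i\ge r_i-r_{i+1}$, so $r_i^{2\alpha-2}\le(r_i-r_{i+1})^{2\alpha-2}$. For the product terms, I would use $r_i^{\alpha-1}r_j^{\alpha-1}\le \max(r_i,r_j)^{\alpha-1}\min(r_i,r_j)^{\alpha-1}\le\min(r_i,r_j)^{2\alpha-2}$. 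When $j\neq d$ and $i\ne d$, $\min(r_i,r_j)$ dominates a gap adjacent to $i$ only if care is taken, so I would verify this case by case. If it fails, I would fall back on the finer bound~\eqref{majo_s_smaller}, where the product term appears only as $s^{2\alpha-2}$ with $s$ the larger time, which is more favourable. Bounded factors $T^{\cdot}$ go into $C$.
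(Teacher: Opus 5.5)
Your overall route is the paper's: Gaussian interpolation via Lemma~\ref{lem_weak_ito}, the chain rule in the mean and covariance parameters, and the ordering $r_1>\dots>r_d$ to reduce $|r_i-r_j|^{2\alpha-2}$ to nearest neighbours. The gap is in the step you yourself flag as the main obstacle: you leave it open, and it cannot be closed with \eqref{majo_all_s}.

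\textbf{Why the product-term absorption fails.} For $j>i$ you differentiate $\covX(r_i,r_j)$ in the \emph{larger} time. There \eqref{majo_all_s} only gives $r_i^{\alpha-1}r_j^{\alpha-1}$, which blows up as $r_j\downarrow 0$ while the gaps adjacent to $i$ stay fixed. Already for $d=2$, $i=1$, $r_2\downarrow 0$, this term diverges, whereas the right-hand side $(r_1-r_2)^{2\alpha-2}$ of (3) stays bounded. So your bound by $\min(r_i,r_j)^{2\alpha-2}=r_j^{2\alpha-2}$ fails for every $j>i$, including $j=d$; it is not a matter of checking cases with $j\neq d$.

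\textbf{Why the fallback does not help.} \eqref{majo_s_smaller} concerns $\partial_1\covX(t,s)$ with $t<s$, i.e.\ the derivative in the \emph{smaller} time. That is the case $j<i$, which your bound already handles, since $r_i^{\alpha-1}r_j^{\alpha-1}\le r_i^{2\alpha-2}$ there. It cannot even hold as printed: for $\kappa_2=0$ and $t<s$ one computes $\partial_t\covX(t,s)\ge \sigma^2t^{\alpha-1}s^{\alpha-1}/\Gamma(\alpha)^2$, which is unbounded as $t\downarrow0$.

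\textbf{Same defect in item (1).} Integrating $s^{\alpha-1}u^{\alpha-1}$ in $u$ gives an $\alpha$-H\"older bound with constant $s^{\alpha-1}/\alpha$. This constant is not uniform as $s\downarrow 0$.

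\textbf{The missing estimate.} You need the one-variable bound in which the singular power is carried by the \emph{differentiated} time:
\[
|\partial_1\covX(t,s)|\le C\left(t^{2\alpha-2}+|t-s|^{2\alpha-2}\right),\qquad s,t\in(0,T],\ s\neq t.
\]
\begin{itemize}
\item For $t<s$ it follows from \eqref{majo_all_s}, since $s^{\alpha-1}\le t^{\alpha-1}$.
\item For $t>s$ it needs an argument. In the second half of the proof of Lemma~\ref{lem:covariance}, bound the factors $s^{i_1\alpha}t^{i_2\alpha-2}$ and $s^{i_1\alpha+1}t^{i_2\alpha-3}$ by $t^{(i_1+i_2)\alpha-2}\le T^{(i_1+i_2-2)\alpha}t^{2\alpha-2}$ rather than by $s^{\alpha-1}t^{\alpha-1}$.
\item Alternatively, for $t>s$, write $\covX(t,s)=\sigma^2\int_0^s K(t-u)K(s-u)\,du$ with $K(x)=x^{\alpha-1}E_{\alpha,\alpha}(\kappa_2x^\alpha)$. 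Using $|K(x)|\le Cx^{\alpha-1}$ and $|K'(x)|\le Cx^{\alpha-2}$ on $(0,T]$, you get $|\partial_t\covX(t,s)|\le C\int_0^\infty (t-s+v)^{\alpha-2}v^{\alpha-1}\,dv\le C(t-s)^{2\alpha-2}$.
\end{itemize}
This is exactly what the paper's proof uses when it writes $|\partial_1\phi|\le C\bigl(r_1^{2\alpha-2}+\sum_{i\ge2}(r_1-r_i)^{2\alpha-2}\bigr)$, and likewise for $\partial_i\phi$.

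\textbf{How it closes the proof.} Item (1) then follows by integrating in one variable uniformly, since $\int_a^b u^{2\alpha-2}\,du\le C(b-a)^{2\alpha-1}$. Items (2)--(3) follow from your nearest-neighbour reduction together with $r_i^{2\alpha-2}\le (r_i-r_{i+1})^{2\alpha-2}$ for $i<d$.
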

\begin{proof} The boundedness of $\phi$ is obvious since $|\Psi(x)|\le C e^{C \sum_{i=1}^d|x_i|}$ and $(X_{r_1},\dots,X_{r_d})$ is normally distributed with a continuous mean and covariance from Propositions~\ref{prop_esp} and~\ref{prop_OUexplicit}.

  Let $m$ and $\Gamma$ (resp. $\mathring{m}$ and $\mathring{\Gamma}$) be the mean and the covariance of $(X_{r_1},\dots,X_{r_d})$ (resp. $(X_{\mathring{r}_1},X_{r_2},\dots,X_{r_d})$). We define, for $u\in[0,1]$,

  $$Z_u \sim \mathcal{N}_d\left((1-u)m+u\mathring{m}, (1-u)\Gamma +u\mathring{\Gamma}\right).$$ 
  From~\eqref{Bound_Psi}, $ \E[\partial_1 \Psi(Z_u)]$ and $ \E[\partial_1 \partial_i \Psi(Z_u)]$, $1\le i\le d$ are bounded for $u\in [0,1]$ and we get by Lemma~\ref{lem_weak_ito}.
  \begin{align*}
  &|\phi(\mathring{r}_1,r_2,\dots,r_m)-\phi(r_1,r_2,\dots,r_m)|\\
  &\le C \left( |\E[X_{\mathring{r}_1}]-\E[X_{r_1}]|+ \sum_{i=2}^d |\covX(\mathring{r}_1,r_i)-\covX(r_1,r_i)| +|\covX(\mathring{r}_1,\mathring{r}_1)-\covX(r_1,r_1)| \right)\\
  &\le C \left( |\E[X_{\mathring{r}_1}]-\E[X_{r_1}]|+ \sum_{i=1}^d |\covX(\mathring{r}_1,r_i)-\covX(r_1,r_i)|+ |\covX(\mathring{r}_1,\mathring{r}_1)-\covX(\mathring{r}_1,r_1)|\right),
  \end{align*}
using the triangular inequality.
  By Proposition~\ref{prop_esp}, $t\mapsto \E[X_t]$ is $\alpha$-Hölder (and thus $(2\alpha-1)$-Hölder) on $[0,T]$. We conclude by using Lemma~\ref{lem:covariance}, Equation~\eqref{cov_increment}.  By symmetry, and with the triangular inequality, we deduce~\eqref{item_1}.

Let $T\ge r_1>\dots>r_d>0$ be fixed.
Let $Z_r$ be the distribution of $(X_{r},X_{r_2},\dots,X_{r_d})$, $r \in (r_1,T]$. By using again Lemma~\ref{lem_weak_ito}, we obtain 
\begin{align*}
    &\partial_1 \phi(r_1,\dots,r_d)=\frac{d}{dr}[\E[X_r]]_{ \big|_{r=r_1}} \E[\partial_1 \Psi(X_{r_1},X_{r_2},\dots,X_{r_d})]+\\
   & \ \ \frac 12 \frac{d}{dr} [\covX(r,r)]\big|_{r=r_1} \E[\partial_1^2 \Psi(X_{r_1},X_{r_2},\dots,X_{r_d})]+\sum_{j=2}^d \partial_1  \covX(r_1,r_j) \E[\partial_1 \partial_j \Psi(X_{r_1},X_{r_2},\dots,X_{r_d})].
  \end{align*}
  Using the sub-exponential growth property~\eqref{Bound_Psi}, the above expectations are bounded. By Proposition~\ref{prop_esp},  $\frac{d}{dr}\E[X_r]\le C r^{\alpha-1}\le C r^{2\alpha-2}$ for $r \in (0,T]$ and by Lemma~\ref{prop_OUexplicit}, $\frac{d}{dr}\covX(r,r)\le C r^{2\alpha-2}$ for $r \in (0,T]$. Then, using Lemma~\ref{lem:covariance}, we obtain 
  $$|\partial_1 \phi(r_1,\dots,r_d)|\le C \left(r_1^{2\alpha-2} +\sum_{i=2}^d (r_1-r_i)^{2\alpha-2}\right) \le C d (r_1-r_2)^{2\alpha-2}.$$

For $i\in \{2,\dots,d-1 \}$, we note $Z_r$ be the distribution of $(X_{r_1},\dots,X_{r_{i-1}},X_r,X_{r_{i+1}},\dots,X_{r_d})$, $r \in (r_{i-1},r_{i+2})$. We have \begin{align*}
    &\partial_i \phi(r_1,\dots,r_d)=\frac{d}{dr}[\E[X_r]]_{ \big|_{r=r_i}} \E[\partial_i \Psi(X_{r_1},\dots,X_{r_d})]+\\
   & \ \ \frac 12 \frac{d}{dr} [\covX(r,r)]\big|_{r=r_i} \E[\partial_i^2 \Psi(X_{r_1}, \dots,X_{r_d})]+\sum_{j=1,j \not = i}^d \partial_1  \covX(r_i,r_j) \E[\partial_i \partial_j \Psi(X_{r_1},\dots,X_{r_d})],
  \end{align*}
  which is upper bounded by
  $$C \left(r_i^{2\alpha-2} + \sum_{j \not = i} |r_j-r_i|^{2\alpha-2}\right)\le C d( (r_{i-1}-r_i)^{2\alpha-2}+(r_{i}-r_{i+1})^{2\alpha-2} ),$$
  using that $|r_j-r_i|^{2\alpha-2}\le (r_{i-1}-r_i)^{2\alpha-2}$ if $j<i$ and $|r_j-r_i|^{2\alpha-2}\le (r_{i}-r_{i+1})^{2\alpha-2}$ if $j>i$. The derivative with respect to $r_d$ is handled in the same way.
\end{proof}

We obtain easily then the following corollary.
\begin{corollary}\label{cor_phi}
  Let $\phi$ be defined by~\eqref{def_phi} and assume that $f\in \mathcal{C}_{\exp}^2(\R, \R)$. Then, $\phi(s,t)$ is continuous and continuously differentiable on $\{(s,t) \in (\R_+^*)^2, s\not = t \}$. Moreover, there exists $C\in \R_+$ such that we have for $0\le s,t\le T$, $|\phi(s,t)|\le C$ and the following estimates:
  \begin{enumerate}
      \item $$|\phi(s,t)-\phi(t,t)|\le C|t-s|^{2\alpha-1},$$
      \item $$|\partial_1 \phi(s,t)|\le C \left((s \wedge t)^{2\alpha-2}+|t-s|^{2\alpha-2}\right),$$
      \item $$|\partial_2 \phi(s,t)|\le C\left(t^{\alpha-1}(s \wedge t)^{2\alpha-2}+|t-s|^{2\alpha-2}\right).$$
  \end{enumerate}
\end{corollary}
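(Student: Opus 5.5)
Corollary~\ref{cor_phi} is Proposition~\ref{prop_phi} with $d=2$ and $\Psi(x_1,x_2)=f(x_1)(ff')(x_2)$, except that the second derivative bound needs a slightly finer treatment, so we follow that proof rather than quoting it blindly. Since $f\in\mathcal{C}^2_{\exp}(\R,\R)$, the functions $f$ and $ff'$ lie in $\mathcal{C}^1_{\exp}$, so $\Psi$ satisfies~\eqref{Bound_Psi} only up to first order in $x_2$. In the formulas below we only need $\partial_1^2\Psi=f''(x_1)ff'(x_2)$, which is fine, and $\partial_2^2\Psi=f(x_1)(ff')'(x_2)=f(x_1)(f'^2+ff'')(x_2)$, which has exponential growth because $f\in\mathcal{C}^2_{\exp}$. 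So $\Psi$ does satisfy~\eqref{Bound_Psi}. Boundedness and continuity of $\phi$ then follow, since $(X_s,X_t)$ is Gaussian with mean and covariance continuous and bounded on $[0,T]$ (Propositions~\ref{prop_esp} and~\ref{prop_OUexplicit}).

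\textbf{Item (1).} Apply Proposition~\ref{prop_phi}~(1) to the pair $(s,t)$ versus $(t,t)$. This gives $C|t-s|^{2\alpha-1}$, with the $s$ and $t$ orderings handled symmetrically because the proof there used only Lemma~\ref{lem_weak_ito}, the $\alpha$-H\"older mean and~\eqref{cov_increment}. The case $s=t$ is trivial.

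\textbf{Items (2) and (3).} For $s\neq t$, both positive, differentiate via Lemma~\ref{lem_weak_ito} (Gaussian interpolation) as in Proposition~\ref{prop_phi}:
\[
\partial_1\phi(s,t)=m'(s)\,\E[\partial_1\Psi]+\tfrac12\tfrac{d}{ds}\covX(s,s)\,\E[\partial_1^2\Psi]+\partial_1\covX(s,t)\,\E[\partial_1\partial_2\Psi],
\]
with all expectations bounded uniformly. From Proposition~\ref{prop_esp}, $|m'(s)|\le Cs^{\alpha-1}$. From the variance formula $\covX(s,s)=\sigma^2\int_0^s u^{2\alpha-2}E_{\alpha,\alpha}^2(\kappa_2u^\alpha)\,du$ we get $|\tfrac{d}{ds}\covX(s,s)|\le Cs^{2\alpha-2}$. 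By~\eqref{majo_all_s}, $|\partial_1\covX(s,t)|\le C(s^{\alpha-1}t^{\alpha-1}+|t-s|^{2\alpha-2})$. Since $s^{\alpha-1}\le Cs^{2\alpha-2}$ on $(0,T]$ and $s^{\alpha-1}t^{\alpha-1}\le (s\wedge t)^{2\alpha-2}$, item (2) follows.

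For $\partial_2\phi$ the same formula holds with the roles of $s$ and $t$ exchanged. Its diagonal terms are bounded by $Ct^{2\alpha-2}$ and its cross term by $C(s^{\alpha-1}t^{\alpha-1}+|t-s|^{2\alpha-2})$.

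\textbf{Main obstacle.} The delicate point is matching the exact form $t^{\alpha-1}(s\wedge t)^{2\alpha-2}$ in item (3), since the term $t^{2\alpha-2}$ coming from the variance of $X_t$ is not obviously of that form.
\begin{itemize}
\item If $t\le s$: then $t^{2\alpha-2}=t^{\alpha-1}t^{\alpha-1}\le C\,t^{\alpha-1}t^{2\alpha-2}$ (using $t^{\alpha-1}\le T^{1-\alpha}t^{2\alpha-2}$), and $t=s\wedge t$.
\item If $t>s$: then $t^{2\alpha-2}\le t^{\alpha-1}s^{\alpha-1}\le C\,t^{\alpha-1}s^{2\alpha-2}$.
\item The cross term satisfies $s^{\alpha-1}t^{\alpha-1}\le C\,t^{\alpha-1}(s\wedge t)^{2\alpha-2}$ in either case.
\end{itemize}
This gives item (3). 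Finally, the continuous differentiability of $\phi$ off the diagonal comes from the continuity in $(s,t)$ of these expressions, using the continuity of $\partial_1\covX$ for $s\ne t$ from the hypergeometric representation.
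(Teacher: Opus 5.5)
There is a genuine gap, and it is exactly the point the paper's proof addresses. Your formula for $\partial_2^2\Psi$ is wrong. For $\Psi(x_1,x_2)=f(x_1)(ff')(x_2)$, the expression $f(x_1)(ff')'(x_2)$ is $\partial_2\Psi$. The second derivative is $\partial_2^2\Psi=f(x_1)(ff')''(x_2)=f(x_1)(3f'f''+ff''')(x_2)$, which involves $f'''$, and $f'''$ need not exist for $f\in\mathcal{C}^2_{\exp}(\R,\R)$. So $\Psi$ does not satisfy~\eqref{Bound_Psi} in general, as your own first sentence (that $ff'$ is only $\mathcal{C}^1$) already indicates. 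You therefore cannot quote Proposition~\ref{prop_phi} or the first part of Lemma~\ref{lem_weak_ito}.

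For items (1) and (2) the repair is the paper's. Let only the first time index move: interpolate from $(X_s,X_t)$ to $(X_t,X_t)$ for (1), and differentiate in $s$ for (2). Then $\E[X_t]$ and the entry $\covX(t,t)$ stay frozen, and you can use the second part of Lemma~\ref{lem_weak_ito}. Only $\partial_1\Psi=f'(x_1)(ff')(x_2)$, $\partial_1^2\Psi=f''(x_1)(ff')(x_2)$ and $\partial_1\partial_2\Psi=f'(x_1)(ff')'(x_2)$ appear, all of exponential growth, and your exponent bookkeeping for (2) then goes through unchanged.

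For item (3) this device is not available. Moving $t$ moves $\covX(t,t)$, so the term $\frac12\frac{d}{dt}\covX(t,t)\,\E[\partial_2^2\Psi]$ in your formula cannot be avoided, and it needs $(ff')''$. The paper's final sentence, which says the same remark also handles $\partial_2\phi$, has the same problem.

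In fact (3) can fail when only $f\in\mathcal{C}^2_{\exp}$ is assumed:
\begin{itemize}
\item Take $\kappa_1=\kappa_2=0$, $x_0=0$, and $f>0$ with $f^2=1+\mathrm{sgn}(y)|y|^{2+\epsilon}$ near $0$. Then $f\in\mathcal{C}^2$ and $ff'=\frac{2+\epsilon}{2}|y|^{1+\epsilon}$ near $0$.
\item Since $\Var(X_t)\asymp t^{2\alpha-1}$, one gets $\phi(T,t)\ge c\,t^{(\alpha-1/2)(1+\epsilon)}$ for small $t$.
\item On the other hand, $\phi(T,0)=\E[f(X_T)]\,(ff')(0)=0$, and integrating (3) from there gives $\phi(T,t)=O(t^{3\alpha-2})$.
\item These two bounds are incompatible when $\alpha>3/4$ and $\epsilon$ is small.
\end{itemize}

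What your argument does establish is the corollary under $f\in\mathcal{C}^3_{\exp}(\R,\R)$. That is the hypothesis of Theorem~\ref{thm_cubic}, where the corollary is actually used. In that setting $\Psi$ genuinely satisfies~\eqref{Bound_Psi}, and your derivation of (3) is correct. This includes the case analysis absorbing $t^{2\alpha-2}$ and $s^{\alpha-1}t^{\alpha-1}$ into $t^{\alpha-1}(s\wedge t)^{2\alpha-2}$.
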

 \begin{proof} 
  The proof is the same as for Proposition~\ref{prop_phi} with $\Psi(x,y)=f(x)ff'(y)$. However, since $f\in \mathcal{C}_{\exp}^2(\R, \R)$, $\Psi$ may not be $\mathcal{C}^2$ and we explain why it still works.  
  
   For the H\"older property, we introduce
    $$Z_u \sim \mathcal{N}\left(\left[\begin{array}{c} (1-u)\E[X_s]+r \E[X_t]\\ \E[X_t]
       \end{array} \right], (1-u) \left[\begin{array}{cc} \covX(s,s) & \covX(s,t)\\ \covX(s,t) & \covX(t,t)
       \end{array} \right] +u \left[\begin{array}{cc} \covX(t,t) & \covX(t,t)\\ \covX(t,t) & \covX(t,t)
       \end{array} \right] \right),$$
       with $u\in [0,1]$. We stress that the second diagonal term of the covariance is constant. Since $\E[\partial_1 \Psi (Z_u)]$, $\E[\partial^2_{1} \Psi (Z_u)]$, $\E[\partial_{1}\partial_2 \Psi (Z_u)]$  are bounded, we can thus apply the second part of Lemma~\ref{lem_weak_ito}  (without requiring the boundedness of $\E[\partial^2_{2} \Psi (Z_u)]$) and get
   \begin{align*}
     |\phi(t,t)-\phi(s,t)|&\le C \left(|\E[X_t]-\E[X_s]| +|\covX(s,t)-\covX(t,t)|+ |\covX(s,s)-\covX(t,t)|\right)\\
     & \le C \left(|\E[X_t]-\E[X_s]| +2 |\covX(s,t)-\covX(t,t)|+ |\covX(s,s)-\covX(s,t)|\right),
   \end{align*}
   which gives the $2\alpha-1$-Hölder property as in Proposition~\ref{prop_phi}.
   The same remark allows to get also the estimates for $\partial_1\phi$ and $\partial_2\phi$.
 \end{proof}

  We state a lemma that slightly extends~\cite[Lemma 2.2]{FSW} to the non-centered case, and can be seen as a direct application of~\cite[Theorem 1.9]{HPRY}, see also~\cite[Theorem 2.1]{HRY}.
  \begin{lemma}\label{lem_weak_ito}
    Let $a<b$, $d\in \N^*$, $m:[a,b] \to \R^d $ and $\Sigma:[a,b] \to \R^{d\times d}$ be continuously differentiable, such that $\Sigma(r)$ is positive semidefinite for all $r \in [a,b]$.
    Let $g:\R^d\to \R$ be a $\mathcal{C}^2$ function such that there exists $C>0$, 
    $$\forall x \in \R^d, |g(x)|+\sum_{k=1}^d|\partial_k g(x)|+\sum_{k,l=1}^d|\partial^2_{k,l}  g(x)|\le Ce^{C|x|}.$$ 
    Then, $\varphi:[a,b]\to \R$ defined by $\varphi(r)=\E[g(Z_r)]$ with $Z_r\sim \mathcal{N}_d(m(r),\Sigma(r))$ is $\mathcal{C}^1$ and satisfies 
    $$ \varphi(r)=\varphi(a)+\int_a^r \sum_{k=1}^d m'_k(s)\E[\partial_k g(Z_s)] + \frac 12 \sum_{k,l=1}^d \Sigma'_{k,l}(s)\E[\partial^2_{k,l} g(Z_s)] ds, \ r \in[a,b].$$
    Suppose in addition that $\Sigma(t)_{k,l}$ is constant for $(k,l) \not \in \mathcal{I} \subset \{1,\dots,d\}^2$. Then, the same result holds for $g\in \mathcal{C}^1$  such that for all $(k,l)\in \mathcal{I}$, $\partial_kg$ is continuously differentiable with respect to the $l$-th coordinate and satisfies 
    $$\forall x \in \R^d, |g(x)|+\sum_{k=1}^d|\partial_k g(x)|+\sum_{(k,l) \in \mathcal{I}}|\partial^2_{k,l}  g(x)|\le Ce^{C|x|}.$$ 
  \end{lemma}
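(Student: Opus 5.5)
The plan is a Gaussian interpolation argument combined with Itô's formula, in the spirit of Hairer--Pillai--Rocher--Yamamoto. Set $\Sigma(r)=\Sigma(r)^{1/2}\Sigma(r)^{1/2}$. Since $\Sigma$ is only $\mathcal{C}^1$ and positive semidefinite, the square root may fail to be differentiable where $\Sigma$ degenerates. To avoid this, I would first prove the statement for $\Sigma_\varepsilon(r)=\Sigma(r)+\varepsilon I_d$, which is uniformly positive definite, and let $\varepsilon\downarrow0$ at the end.

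\textbf{Regularity of $\varphi$.} For $\Sigma_\varepsilon$ positive definite, write
$$\varphi_\varepsilon(r)=\int g(x)\,p_r(x)\,dx,$$
where $p_r$ is the $\mathcal{N}(m(r),\Sigma_\varepsilon(r))$ density. This density is $\mathcal{C}^1$ in $r$. Its $r$-derivative is bounded by a polynomial in $x$ times a Gaussian, uniformly on $[a,b]$, which dominates $e^{C|x|}$. Hence differentiation under the integral is justified, and $\varphi_\varepsilon\in\mathcal{C}^1$.

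\textbf{Derivative formula.} I would use the classical identities for the Gaussian density:
$$\partial_{m_k}p=-\partial_{x_k}p,\qquad \partial_{\Sigma_{kl}}p=\tfrac12\,\partial^2_{x_kx_l}p \ \text{(symmetrized)}.$$
These give
$$\frac{d}{dr}p_r=-\sum_k m_k'\,\partial_kp_r+\tfrac12\sum_{k,l}\Sigma'_{kl}\,\partial^2_{kl}p_r.$$
Integrating by parts in $x$ then transfers the derivatives onto $g$, yielding
$$\varphi_\varepsilon'(r)=\sum_k m_k'\,\E[\partial_kg(Z^\varepsilon_r)]+\tfrac12\sum_{k,l}\Sigma'_{kl}\,\E[\partial^2_{kl}g(Z^\varepsilon_r)].$$
The boundary terms vanish because of the exponential bound on $g$, $\partial g$, $\partial^2 g$ against Gaussian decay. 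Integrating from $a$ to $r$ gives the identity for $\varphi_\varepsilon$.

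\textbf{Passage to $\varepsilon\to0$.} Represent $Z^\varepsilon_s=m(s)+\Sigma(s)^{1/2}G+\sqrt\varepsilon\,G'$ with $G,G'$ independent standard Gaussians. As $\varepsilon\to0$, this converges to $Z_s$ almost surely and uniformly in $s$. The exponential-moment bounds give uniform integrability. Dominated convergence then passes the limit in $\varphi_\varepsilon(r)$, $\varphi_\varepsilon(a)$ and the integrand, uniformly in $s\in[a,b]$. The resulting integrand is continuous in $s$, because $(m,\Sigma)$ and hence $\mathrm{Law}(Z_s)$ are continuous. Therefore $\varphi$ is $\mathcal{C}^1$ and satisfies the stated formula.

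\textbf{Second part.} When $\Sigma_{kl}$ is constant for $(k,l)\notin\mathcal{I}$, we have $\Sigma'_{kl}=0$ off $\mathcal{I}$. Only the second derivatives $\partial^2_{kl}g$ with $(k,l)\in\mathcal{I}$ then appear. I would mollify $g_\delta=g*\rho_\delta$, apply the first part to $g_\delta$, and let $\delta\to0$, using that $\partial_kg_\delta\to\partial_kg$ and $\partial^2_{kl}g_\delta\to\partial^2_{kl}g$ for $(k,l)\in\mathcal{I}$, locally uniformly and with the same exponential domination. The limit requires only the listed derivatives.

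\textbf{Main obstacle.} The main technical point is the degenerate case together with the symmetrization of $\partial_{\Sigma_{kl}}p$. Off-diagonal entries appear twice, so one must check that the factor $\tfrac12$ is consistent with summing over all ordered pairs $(k,l)$.
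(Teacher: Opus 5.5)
Your proof is correct, but it follows a different route from the paper. The paper absorbs the mean into the test function by setting $F(r,x)=g(m(r)+x)$, so that $|\partial_rF(r,x)|\le Ce^{C|x|}$. It then applies the weak Itô formula of \cite[Theorem~1.9]{HPRY} to $\E[F(r,G_r)]$ with $G_r\sim\mathcal{N}_d(0,\Sigma(r))$. Degeneracy of $\Sigma$ is therefore handled inside the cited theorem. The second statement is justified only by saying that the proof there can be adapted, since the $\partial^2_{k,l}g$ with $(k,l)\notin\mathcal{I}$ never appear.

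You instead give a self-contained argument. You regularise to $\Sigma+\varepsilon I_d$, use $\partial_m p=-\nabla_x p$ and the heat-equation identity for the Gaussian density, integrate by parts, and remove $\varepsilon$ through the coupling $m(s)+\Sigma(s)^{1/2}G+\sqrt{\varepsilon}G'$. You then reduce the second statement to the first by mollifying $g$. The uncontrolled derivatives $(\partial_kg)*\partial_l\rho_\delta$ still have exponential growth for fixed $\delta$, and they are multiplied by $\Sigma'_{kl}\equiv 0$ anyway. Your route costs more lines, but it does not depend on the reference, and it makes fully explicit the second statement, which the paper only sketches.

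Three small points:
\begin{itemize}
\item The ``symmetrisation obstacle'' you flag is moot. Differentiating $\Sigma\mapsto p$ along the symmetric direction $\Sigma'(r)$ gives exactly $\frac12\sum_{k,l}\Sigma'_{kl}\partial^2_{kl}p$ over ordered pairs; this is immediate from the characteristic function $e^{i\xi\cdot m-\frac12\xi^\top\Sigma\xi}$.
\item Itô's formula, announced in your plan, is never actually used.
\item In the limit $\delta\to0$ you need $\partial^2_{kl}g$ to be jointly continuous for $(k,l)\in\mathcal{I}$. Since $Z_s$ may be degenerate, you need $(\partial^2_{kl}g)*\rho_\delta\to\partial^2_{kl}g$ everywhere, not just almost everywhere. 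This is the natural reading of the hypothesis and holds in the application in Corollary~\ref{cor_phi}, but it is worth stating.
\end{itemize}
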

  \begin{proof}
  Let $F(r,x)=g(m(r)+x)$. We have $\partial_r F(r,x)=\sum_{k=1}^d m'_k(r)\partial_k g(m(r)+x)$, and thus $|\partial_r F(r,x)|\le C e^{C|x|}$ for some constant $C>0$, since $m$ is $\mathcal{C}^1$ on $[a,b]$. We apply then the weak Itô formula~\cite[Theorem 1.9]{HPRY}.  

  The second statement can be easily deduced by adapting the proof of~\cite[Theorem 1.9]{HPRY}, since the partial derivatives $\partial_{k,l}^2g (x)$ for $(k,l) \not \in \mathcal{I}$ no longer appear in the calculations.  
  \end{proof}
  \begin{corollary}\label{cor_weak_ito}
    Let $Z_1\sim\mathcal{N}_d(m_1,\Sigma_1)$ and $Z_2\sim\mathcal{N}_d(m_2,\Sigma_2)$ with $m_1,m_2\in \R^d$ and $\Sigma_1,\Sigma_2$ positive semidefinite matrices. Let $g$ satisfy the same assumption as in Lemma~\ref{lem_weak_ito}. Then, for any $M>0$, there is a constant $C\in \R_+$ such that    
     $$|\E[g(Z_2)]-\E[g(Z_1)]|\le C \left(\sum_{k=1}^d |(m_1-m_2)_k|+  \sum_{k,l=1}^d |(\Sigma_1-\Sigma_2)_{k,l}|\right),$$
for all $m_1,m_2,\Sigma_1,\Sigma_2$ such that $\max(|m_1|,|m_2|,|\Sigma_1|,|\Sigma_2|)\le M$.
    \end{corollary}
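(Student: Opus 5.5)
The plan is to deduce Corollary~\ref{cor_weak_ito} directly from Lemma~\ref{lem_weak_ito} by linear interpolation between the two Gaussian laws. For $r\in[0,1]$, set $m(r)=(1-r)m_1+rm_2$ and $\Sigma(r)=(1-r)\Sigma_1+r\Sigma_2$. These are affine, hence $\mathcal{C}^1$, with $m'(r)=m_2-m_1$ and $\Sigma'(r)=\Sigma_2-\Sigma_1$. Since positive semidefinite matrices form a convex cone, each $\Sigma(r)$ is positive semidefinite. Let $Z_r\sim\mathcal{N}_d(m(r),\Sigma(r))$, so that $Z_0\stackrel{d}{=}Z_1$ and $Z_1\stackrel{d}{=}Z_2$ in the notation of the corollary; the overloading of indices is harmless. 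Lemma~\ref{lem_weak_ito} on $[a,b]=[0,1]$ gives
\begin{equation*}
\E[g(Z_2)]-\E[g(Z_1)]=\int_0^1 \sum_{k=1}^d (m_2-m_1)_k\E[\partial_k g(Z_r)]+\frac12\sum_{k,l=1}^d (\Sigma_2-\Sigma_1)_{k,l}\E[\partial^2_{k,l}g(Z_r)]\,dr .
\end{equation*}
The triangle inequality then reduces the claim to the bound
\begin{equation*}
\sup_{r\in[0,1]}\Bigl(\max_k|\E[\partial_kg(Z_r)]|+\max_{k,l}|\E[\partial^2_{k,l}g(Z_r)]|\Bigr)\le C_M,
\end{equation*}
with $C_M$ depending only on $M$, $d$ and the growth constant of $g$.

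This uniform bound is the only real step. Every derivative $h$ involved satisfies $|h(x)|\le Ce^{C|x|}\le Ce^{C\sum_i|x_i|}$, and by convexity $|m(r)|\le M$ and $|\Sigma(r)|\le M$ (in any fixed matrix norm, up to a dimensional constant). Write $Z_r=m(r)+\Sigma(r)^{1/2}G$ with $G\sim\mathcal{N}_d(0,I_d)$. The entries of $\Sigma(r)^{1/2}$ are bounded by $\sqrt{\mathrm{tr}\,\Sigma(r)}\le c_dM^{1/2}$, which gives
\begin{equation*}
\E\bigl[e^{C\sum_i|Z_{r,i}|}\bigr]\le e^{CdM}\,\E\bigl[e^{C c_d M^{1/2}\sum_{i,j}|G_j|}\bigr]<\infty.
\end{equation*}
The right-hand side does not depend on $r$ or on the particular $m_1,m_2,\Sigma_1,\Sigma_2$ in the ball of radius $M$. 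This yields the constant $C_M$ and concludes the proof.

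I expect no serious obstacle. The only points to check are that Lemma~\ref{lem_weak_ito} allows degenerate (merely semidefinite) $\Sigma(r)$, which its statement does, and that the constant is uniform in the admissible parameters, which is exactly what the explicit square-root representation above provides.
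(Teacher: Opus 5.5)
Your proposal is correct and follows the paper's proof: the paper also interpolates linearly between $(m_1,\Sigma_1)$ and $(m_2,\Sigma_2)$ (in the opposite direction), applies Lemma~\ref{lem_weak_ito} on $[0,1]$, and bounds the expectations of the first and second derivatives of $g$ uniformly in terms of $M$. Your representation $Z_r=m(r)+\Sigma(r)^{1/2}G$ makes explicit the uniform exponential-moment bound, which the paper only asserts.
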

  \begin{proof}
    Let $\varphi(r)=\E[g(Z_r)]$ with $Z_r\sim \mathcal{N}_d(rm_1+(1-r)m_2,r\Sigma_1+(1-r)\Sigma_2)$ for $r\in[0,1]$. The, we get by Lemma~\ref{lem_weak_ito} 
    $$\varphi(1)-\varphi(0)=\int_0^1 \sum_{k=1}^d (m_1-m_2)_k \E[\partial_k g(Z_r)] +\frac 12 \sum_{k,l=1}^d (\Sigma_1-\Sigma_2)_{k,l} \E[\partial^2_{k,l} g(Z_r)] dr.$$
    From the assumption on~$g$, $\sup_{r\in [0,1]} \sum_{k=1}^d  \E[|\partial_k g(Z_r)|]+ \sum_{k,l=1}^d \E[|\partial^2_{k,l} g(Z_r)|]$ is bounded by a constant depending on~$M$. We then get the claim.   
  \end{proof}

  \section{Technical proofs}

\begin{proof}[Proof of Theorem~\ref{thm_technique}]
    We first observe that $\E[ X_t (X_u-X_{\eta(u)})]=\E[ X_t] \E[ (X_u-X_{\eta(u)})]+ \E[ \mathcal{Y}_t (X_u-X_{\eta(u)})]$. Therefore, 
    \begin{align} \max_{1\le k\le n}\left|  \int_0^{t_k}\frac{(t_k-u)^{\alpha-1}}{\Gamma(\alpha)} \E[ X_t (X_u-X_{\eta(u)})]du   \right| \le &|\E[X_t]| \max_{1\le k\le n}\left|  \int_0^{t_k}\frac{(t_k-u)^{\alpha-1}}{\Gamma(\alpha)} \E[X_u-X_{\eta(u)}]du   \right| \notag\\
      &+\max_{1\le k\le n}\left|  \int_0^{t_k}\frac{(t_k-u)^{\alpha-1}}{\Gamma(\alpha)} \E[ \mathcal{Y}_t (X_u-X_{\eta(u)})]du   \right|.\label{majoration_1}\end{align}
  We use the estimate $ \left|  \int_0^{t_k}\frac{(t_k-u)^{\alpha-1}}{\Gamma(\alpha)}  (\E[X_u]-\E[X_{\eta(u)}])du   \right| \le \frac{C_T}{n}$ (see Equation~\eqref{Diethelm_esperance}) for some $C_T\in \R_+$. By Proposition~\ref{prop_esp}, $|\E[X_t]|$ is bounded uniformly on $t\in[0,T]$, and we get 
  \begin{equation}\label{esperance_terme}
    \forall t \in [0,T], \ |\E[X_t]| \max_{1\le k\le n}\left|  \int_0^{t_k}\frac{(t_k-u)^{\alpha-1}}{\Gamma(\alpha)} \E[X_u-X_{\eta(u)}]du   \right| \le \frac{C_T}{n},
  \end{equation}
  where $C_T$ is a constant depending on $T$ and on $X_0$, $\kappa_1$, $\kappa_2$, $\sigma$ and $\alpha$, that may change from line to line.

  We are thus interested in studying $\E[ \cY_t (X_u-X_{\eta(u)})]=\covX(t,u)-\covX(t,\eta(u))$. For $\xi  \in [0,T^{\alpha}]$, let\footnote{Note that this change of variable allows us to get $t^{\alpha-1}$ instead of $t^{2\alpha-2}$ that appears from a direct application of Lemma~\ref{lem:covariance}.} $g_t(\xi)=\covX(t,\xi^{1/\alpha})$. We have $g'_t(\xi)=\frac{1}{\alpha}\xi^{1/\alpha-1}\partial_2 \covX(t, \xi^{1/\alpha})$, and by Lemma~\ref{lem:covariance}, we get
  $$\forall \xi \in (0,T),\ |g'_t(\xi)|\le C_T(t^{\alpha-1}+|t-\xi^{1/\alpha}|^{2\alpha-2}).$$
  We have
  \begin{align*}
    &\left|\int_0^{t_k}\frac{(t_k-u)^{\alpha-1}}{\Gamma(\alpha)} \E[ \cY_t (X_u-X_{\eta(u)})]du \right|\\
    &\le \int_0^{t_k}\frac{(t_k-u)^{\alpha-1}}{\Gamma(\alpha)} |g_t(u^\alpha)-g_t(\eta(u)^\alpha)|du =\sum_{i=1}^k\int_{t_{i-1}}^{t_i}\frac{(t_k-u)^{\alpha-1}}{\Gamma(\alpha)} |g_t(u^\alpha)-g_t(t_{i-1}^\alpha)|du \\
    &=\sum_{i=1}^k\int_{t_{i-1}}^{t_i}\frac{(t_k-u)^{\alpha-1}}{\Gamma(\alpha)} (u^\alpha-t_{i-1}^\alpha) \left|\int_0^1 g_t'(z u^\alpha+ (1-z)t_{i-1}^\alpha) dz \right|du \\
    &\le \sum_{i=1}^k\int_{t_{i-1}}^{t_i}\frac{(t_k-u)^{\alpha-1}}{\Gamma(\alpha)} (u^\alpha-t_{i-1}^\alpha)\int_0^1 C_T\left(t^{\alpha-1}+\left|t-(z u^\alpha+ (1-z)t_{i-1}^\alpha)^{1/\alpha}\right|^{2\alpha-2}\right)dz du\\
   & =: \mathcal{A}+\mathcal{B}.
  \end{align*}
  The first term $\mathcal{A}:=C_T t^{\alpha-1} \sum_{i=1}^k\int_{t_{i-1}}^{t_i}\frac{(t_k-u)^{\alpha-1}}{\Gamma(\alpha)} (u^\alpha-t_{i-1}^\alpha)du =C_T t^{\alpha-1} \int_0^{t_k} \frac{(t_k-u)^{\alpha-1}}{\Gamma(\alpha)} (u^\alpha -\eta(u)^\alpha) du \le C_T t^{\alpha-1} \frac{t_k^{2\alpha-1}}{n} \le\frac{C_T t^{\alpha-1}}{n}$ by using~\cite[Theorem 2.4 (b)]{DFF}.
  The second term is equal to:
  \begin{align}
    \mathcal{B}:=&\sum_{i=1}^k\int_{t_{i-1}}^{t_i}\frac{(t_k-u)^{\alpha-1}}{\Gamma(\alpha)} (u^\alpha-t_{i-1}^\alpha)\int_0^1 C_T \left|t-(z u^\alpha+ (1-z)t_{i-1}^\alpha)^{1/\alpha}\right|^{2\alpha-2} dz du \notag \\
    &=\sum_{i=1}^k\int_{t_{i-1}}^{t_i}\frac{(t_k-u)^{\alpha-1}}{\Gamma(\alpha)} (u^\alpha-t_{i-1}^\alpha)\int_{t_{i-1}}^u C_T \left|t-v\right|^{2\alpha-2} \frac{dv}{\frac 1 \alpha (u^\alpha -t_{i-1}^\alpha) v^{1-\alpha}} du \notag\\
    &=\sum_{i=1}^k\int_{t_{i-1}}^{t_i}\frac{\alpha (t_k-u)^{\alpha-1}}{\Gamma(\alpha)} \int_{t_{i-1}}^u C_T v^{\alpha-1}\left|t-v\right|^{2\alpha-2} dv  du  \notag\\
    & \le \sum_{i=1}^k\int_{t_{i-1}}^{t_i}\frac{\alpha (t_k-u)^{\alpha-1}}{\Gamma(\alpha)} \int_{t_{i-1}}^{t_i} C_T v^{\alpha-1}\left|t-v\right|^{2\alpha-2} dv  du  \notag\\
    &=\frac{C_T}{\Gamma(\alpha)}\sum_{i=1}^k \left( (t_k-t_{i-1})^\alpha - (t_k-t_{i})^\alpha \right)\int_{t_{i-1}}^{t_i}  v^{\alpha-1}\left|t-v\right|^{2\alpha-2} dv \notag .
  \end{align}
  Next, we use 
  \begin{equation*} v^{\alpha-1}\left|t-v\right|^{2\alpha-2} \le \left( \frac{t}{2}\right)^{\alpha-1}\left|t-v\right|^{2\alpha-2} +\left( \frac{t}{2}\right)^{2\alpha-2}v^{\alpha-1}, 
  \end{equation*}
  since $v^{\alpha-1}\left|t-v\right|^{2\alpha-2} \le \left( \frac{t}{2}\right)^{\alpha-1}\left|t-v\right|^{2\alpha-2} $ for $v\ge t/2$ and  $v^{\alpha-1}\left|t-v\right|^{2\alpha-2} \le \left( \frac{t}{2}\right)^{2\alpha-2}v^{\alpha-1}$ for $v \in (0,t/2)$.
  This gives 
  \begin{equation}
    \mathcal{B} \le \mathcal{B}_1+\mathcal{B}_2,  \label{2ndT}
  \end{equation}
  where 
  \begin{align*}
    \mathcal{B}_1&:=\left( \frac{t}{2}\right)^{2\alpha-2} \frac{C_T}{\Gamma(\alpha)}\sum_{i=1}^k \left( (t_k-t_{i-1})^\alpha - (t_k-t_{i})^\alpha \right)\int_{t_{i-1}}^{t_i}  v^{\alpha-1}  dv \\
    \mathcal{B}_2&:=\left( \frac{t}{2}\right)^{\alpha-1} \frac{C_T}{\Gamma(\alpha)}\sum_{i=1}^k \left( (t_k-t_{i-1})^\alpha - (t_k-t_{i})^\alpha \right)\int_{t_{i-1}}^{t_i}  \left|t-v\right|^{2\alpha-2}  dv.
  \end{align*}
  On the one hand, we have 
  \begin{align}
   \mathcal{B}_1\le  &\left( \frac{t}{2}\right)^{2\alpha-2} \frac{C_T}{\Gamma(\alpha)}\sum_{i=1}^k \left( (t_k-t_{i-1})^\alpha - (t_k-t_{i})^\alpha \right)\int_{t_{i-1}}^{t_i}  v^{\alpha-1}  dv \notag \\
    \le &\left( \frac{t}{2}\right)^{2\alpha-2} \frac{C_T}{\alpha \Gamma(\alpha)}\sum_{i=1}^k \left( (t_k-t_{i-1})^\alpha - (t_k-t_{i})^\alpha \right)\left(t_i^\alpha - t_{i-1}^\alpha \right) \notag\\
  \le &C_T \frac{ t^{2\alpha-2}k^{2\alpha-1}}{n^{2\alpha}} \le C_T \frac{ t^{2\alpha-2}}{n},\label{somme_1} 
  \end{align}
  by Lemma~\ref{lemma_beta_disc}, using that $2\alpha>1$ and $k\le n$.
  
  On the other hand,  we use the following equality for $\mathcal{B}_2$:
  $$\int_{t_{i-1}}^{t_i}  \left|t-v\right|^{2\alpha-2}  dv = \frac 1 {2\alpha-1} \times \begin{cases} (t-t_{i-1})^{2\alpha-1}-(t-t_{i})^{2\alpha-1} \text{ if } t_i \le t\\
    (t_{i}-t)^{2\alpha-1} + (t-t_{i-1})^{2\alpha-1}  \text{ if } t_{i-1} \le t <t_i\\(t_{i}-t)^{2\alpha-1} - (t_{i-1}-t)^{2\alpha-1} \text{ if } t_{i-1} > t.\end{cases}$$
    We then split the sum defining $\mathcal{B}_2$ in three terms:
    \begin{align*}
     \mathcal{B}_2= \sum_{i=1}^k a_{i,k}& = \sum_{i=1, t_i\le t }^k a_{i,k}+\sum_{i=1, t_{i-1}> t }^k a_{i,k} +\mathbf{1}_{k\ge {\bf i}(t)+1} a_{{\bf i}(t)+1,k}\\
    & = \sum_{i=1}^{\min(k,{\bf i}(t))} a_{i,k}+\sum_{i={\bf i}(t)+2}^{k} a_{i,k} +\mathbf{1}_{k\ge {\bf i}(t)+1} a_{{\bf i}(t)+1,k}=:\mathcal{B}_{21}+\mathcal{B}_{22}+\mathcal{B}_{23},
  \end{align*}
  where ${\bf i}(t) \in \{0,\dots,n\}$ is the index such that $\eta(t)=t_{{\bf i}(t)}$ for $t\in[0,T]$ and setting  $a_{i,k}=\left( \frac{t}{2}\right)^{\alpha-1} \frac{C_T}{\Gamma(\alpha)}\left( (t_k-t_{i-1})^\alpha - (t_k-t_{i})^\alpha \right)\int_{t_{i-1}}^{t_i}  \left|t-v\right|^{2\alpha-2}  dv$, with using the standard convention $\sum_{i=i_1}^{i_2}a_{i,k}= 0$ if $i_2<i_1$.

    \noindent {\bf $\bullet$ Term $\mathcal{B}_{21}$:} Since $t\mapsto (t-t_{i-1})^{2\alpha-1}-(t-t_{i})^{2\alpha-1}$ is decreasing for $t\ge t_i$,
  \begin{align*}
    &\sum_{i\le k, t_i\le t} \left( (t_k-t_{i-1})^\alpha - (t_k-t_{i})^\alpha \right) \left( (t-t_{i-1})^{2\alpha-1}-(t-t_{i})^{2\alpha-1}\right) \\
    \le &\sum_{i\le k, t_i\le t} \left( (t_k-t_{i-1})^\alpha - (t_k-t_{i})^\alpha \right) \left( \eta(t)-t_{i-1})^{2\alpha-1}-(\eta(t)-t_{i})^{2\alpha-1}\right)\\
    = & \left( \frac{T}{n}\right)^{3\alpha-1} \sum_{i=1}^{\min(k,{\bf i}(t))}  \left( (k-i+1)^\alpha - (k-{i})^\alpha \right) \left( ({\bf i}(t)-i+1 )^{2\alpha-1}-({\bf i}(t)-{i})^{2\alpha-1}\right)\\
    \le & \frac{C_T}{n^{3\alpha-1}}  \sum_{i=1}^{\min(k,{\bf i}(t))}  (k-i+1)^{\alpha-1}  ({\bf i}(t)-i+1 )^{2\alpha-2} \text{ by~\eqref{eq_LiZeng}}\\
    \le& \frac{C_T}{n^{3\alpha-1}}  \sum_{i=1}^{\min(k,{\bf i}(t))}  (\min(k,{\bf i}(t))-i+1)^{3\alpha-3} = \frac{C_T}{n^{3\alpha-1}}  \sum_{i=1}^{\min(k,{\bf i}(t))}  i^{3\alpha-3}\le \frac{C_T}{n^{3\alpha-1}}  \sum_{i=1}^{n}  i^{3\alpha-3}.
  \end{align*}
  We now observe that $\sum_{i=1}^n i^{3\alpha-3}=O(n^{3\alpha-2})$ for $\alpha>2/3$, $\sum_{i=1}^n i^{3\alpha-3}=O(\log(n))$ for $\alpha=2/3$, and $\sum_{i=1}^n i^{3\alpha-3}=O(1)$ for $1/2<\alpha<2/3$. This gives 
  $$\mathcal{B}_{21}\le C_T t^{\alpha-1} {\bf v}_n(\alpha).$$
  
   \noindent {\bf $\bullet$ Term $\mathcal{B}_{22}$:} Similarly, since $t\mapsto (t_{i}-t)^{2\alpha-1}-(t_{i-1}-t)^{2\alpha-1}$ is increasing for $t< t_{i-1}$, we get
  \begin{align*}
    &\sum_{i\le k,  t< t_{i-1}}   \left( (t_k-t_{i-1})^\alpha - (t_k-t_{i})^\alpha \right) \left( (t_{i}-t)^{2\alpha-1}-(t_{i-1}-t)^{2\alpha-1}\right)\\
    \le &\sum_{i\le k,  t < t_{i-1}}   \left( (t_k-t_{i-1})^\alpha - (t_k-t_{i})^\alpha \right) \left( (t_{i}-t_{{\bf i}(t)+1})^{2\alpha-1}-(t_{i-1}-t_{{\bf i}(t)+1})^{2\alpha-1}\right)\\
    =&\frac{C_T}{n^{3\alpha-1}} \sum_{i=\eta(t)+2}^k \left((k-i+1)^\alpha-(k-i)^\alpha\right)\left((i-{\bf i}(t)-1)^{2\alpha-1}-(i-{\bf i}(t)-2)^{2\alpha-1}\right)\\
    \le & \frac{C_T}{n^{3\alpha-1}} \sum_{  i={\bf i}(t) +2}^{ k} (k-i+1)^{\alpha-1}  (i-{\bf i}(t)-1)^{2\alpha-2} \text{ by~\eqref{eq_LiZeng}}\\=&\frac{C_T}{n^{3\alpha-1}} \sum_{  i=1}^{ k-{\bf i}(t)-1 } (k-{\bf i}(t) -i)^{\alpha-1}  i^{2\alpha-2} \\
    \le& \mathbf{1}_{k\ge {\bf i}(t)+2 }\frac{C_T}{n^{3\alpha-1}} (k-{\bf i}(t)-1)^{3\alpha-2},
  \end{align*}
  where we used Lemma~\ref{lemma_beta_disc2} for the last inequality. 
  Therefore, we get $$\mathcal{B}_{22} \le C_T t^{\alpha-1} {\bf v}_n(\alpha),$$
  by using that $k-{\bf i}(t)-1\le n$ for $3\alpha-2>0$ and $\mathbf{1}_{k\ge {\bf i}(t) +2 }(k-{\bf i}(t)-1)^{3\alpha-2} \le 1$ for $3\alpha-2\le 0$.
  
   \noindent {\bf $\bullet$ Term $\mathcal{B}_{23}$:} We have
   \begin{align*}\mathcal{B}_{23}=&\mathbf{1}_{k\ge {\bf i}(t)+1}\left( \frac{t}{2}\right)^{\alpha-1} \frac{C_T}{(2\alpha-1)\Gamma(\alpha)} \left( (t_k-\eta(t))^\alpha - (t_k-t_{{\bf i}(t)+1})^\alpha \right) \left((t_{{\bf i}(t)+1}-t)^{2\alpha-1} + (t-\eta(t))^{2\alpha-1}\right) \\
    &\le \mathbf{1}_{k\ge {\bf i}(t)+1} C_T t^{\alpha-1} \frac{(t_{{\bf i}(t)+1}-\eta(t))^{\alpha}}{n^{2\alpha-1}}\le C_T t^{\alpha-1} \frac{1}{n^{3\alpha-1}} \le  C_T t^{\alpha-1} {\bf v}_n(\alpha) , 
   \end{align*}
  by using the $\alpha$-H\"older inequality and also that $2\alpha-1>0$ and $\max(t-\eta(t),t_{{\bf i}(t)+1}-t)\le T/n$. Gathering all the terms, we get 
  \begin{equation*}
    \mathcal{B}_{2}=\mathcal{B}_{21}+\mathcal{B}_{22}+\mathcal{B}_{23}\le  C_T t^{\alpha-1} {\bf v}_n(\alpha).
  \end{equation*}
  From~\eqref{majoration_1}, we get the claim by using~\eqref{esperance_terme},~\eqref{2ndT},~\eqref{somme_1} with the last inequality and  $t^{\alpha-1}\le 1+t^{2\alpha-2}$ since $\alpha\le 1$. 
  \end{proof}

\begin{proof}[Proof of Lemma~\ref{lem_err_Malliavin3}]
Let us first check by induction that $\xcn_{t_k}\in \mathbb{D}^{1,2}$  so that $\D_s \xcn_{t_k}$ is well defined. For $k=1$, we have $\xcn_{t_1}=x_0+(\kappa_1+\kappa_2 x_0) \frac{t_1^\alpha}{\Gamma(\alpha+1)}+ \sigma \int_0^{t_1} \frac{(t_1-s)^{\alpha-1}}{\Gamma(\alpha)} dW_s$ and thus $\D_s\xcn_{t_1}=\mathbf{1}_{s<t_1}\frac{(t_1-s)^{\alpha-1}}{\Gamma(\alpha)}$, which is deterministic and satisfies $\int_0^{t_1} (\D_s \xcn_{t_1})^2 ds <\infty$. Next, we proceed by induction on~$k$ from the dynamics~\eqref{eq:scheme_price} on $\xcn$. Then, we necessarily have $\D_s \xcn_{t_k}=\sum_{j=1}^{k-1} \left(\int_{t_j}^{t_{j+1}}  \frac{(t_k-u)^{\alpha-1}}{\Gamma(\alpha)}du \right) \D_s \xcn_{t_j}+ \sigma\frac{(t_k-s)^{\alpha-1}}{\Gamma(\alpha)}$, which gives that $\int_0^{t_k} (\D_s \xcn_{t_k})^2ds<\infty$ by using $\int_0^{t_k} (t_k-s)^{2\alpha-2}ds <\infty$ and the induction hypothesis. Therefore,
$\D_s\xcn_{t_k}$ is well defined, deterministic, and satisfies~\eqref{eq_dxcn}.

From~\eqref{SVE_OU}, we also easily get $$
\D_sX_{t_k}= \kappa_2 \int_0^{t_k} \D_s X_{u} \frac{(t_k-u)^{\alpha-1}}{\Gamma(\alpha)}du + \sigma\frac{(t_k-s)^{\alpha-1}}{\Gamma(\alpha)}.$$
Combined with~\eqref{eq_dxcn}, this gives 
\begin{align}\label{rec_delta_Malliavin}
  \D_sX_{t_k}-\D_s\xcn_{t_k} &=\kappa_2 \int_0^{t_k} (\D_s X_{\eta(u)}-\D_s \xcn_{\eta(u)})  \frac{(t_k-u)^{\alpha-1}}{\Gamma(\alpha)}du+ \kappa_2 \int_0^{t_k} (\D_s X_{u}-\D_s X_{\eta(u)})  \frac{(t_k-u)^{\alpha-1}}{\Gamma(\alpha)}du \notag
  \\
  &= \kappa_2 \sum_{i=1}^{k-1} c_{i,k} (\D_sX_{t_i}-\D_s\xcn_{t_i})+\kappa_2 \int_0^{t_k} (\D_s X_{u}-\D_s X_{\eta(u)})  \frac{(t_k-u)^{\alpha-1}}{\Gamma(\alpha)}du , 
\end{align}
with $c_{i,k}=\int_{t_i}^{t_{i+1}}\frac{(t_k-u)^{\alpha-1}}{\Gamma(\alpha)}du$. We integrate on $[0,t_j]$, set $I\Delta_{j,k}=\int_0^{t_j} \left(\D_s\xcn_{t_k} -\D_sX_{t_k}\right) ds$ and get for $j\le k$,
$$I\Delta_{j,k}=\kappa_2  \sum_{i=1}^{k-1} c_{i,k}  I\Delta_{j,i} +\kappa_2 \int_0^{t_j} \int_0^{t_k} (\D_s X_{u}-\D_s X_{\eta(u)})  \frac{(t_k-u)^{\alpha-1}}{\Gamma(\alpha)}du ds.$$
Then,  we get 
$$|I\Delta_{j,k}|\le |\kappa_2|  \sum_{i=1}^{k-1} c_{i,k}  |I\Delta_{j,i}| +|\kappa_2|  \int_0^{t_k} \frac{(t_k-u)^{\alpha-1}}{\Gamma(\alpha)} \left| \int_0^{t_j}(\D_s X_{u}-\D_s X_{\eta(u)})  ds\right| du.$$
So, by a Gr\"onwall type inequality~\cite[Lemma 3.4]{LiZe}, it is sufficient to check that 
\begin{equation}\label{bound_Sjk}\cS_{j,k}:=\int_0^{t_k} \frac{(t_k-u)^{\alpha-1}}{\Gamma(\alpha)} \left| \int_0^{t_j}(\D_s X_{u}-\D_s X_{\eta(u)})  ds\right| du\le C\frac{T}{n},
\end{equation}
for a constant $C$ that does not depend on $0\le j\le k\le n$.
We have by Lemma~\ref{prop_OUexplicit}
\begin{align*}
  \int_0^{t_j}(\D_s X_{u}&-\D_s X_{\eta(u)})  ds= \begin{cases}
    \sigma \sum_{i=1}^\infty \kappa_2^{i-1}  \frac{u^{i\alpha}-\eta(u)^{i\alpha}}{\Gamma(i \alpha+1)} \text{ for } u\le t_j,  \\
    \sigma \sum_{i=1}^\infty \kappa_2^{i-1}  \frac{u^{i\alpha}-(u-t_j)^{i\alpha}-(\eta(u)^{i\alpha} -(\eta(u)-t_j)^{i\alpha} )}{\Gamma(i \alpha+1)} \text{ for } u\in]t_j,t_k],
  \end{cases}\\
  &=\sigma \sum_{i=1}^\infty \kappa_2^{i-1}  \frac{u^{i\alpha}-\eta(u)^{i\alpha}}{\Gamma(i \alpha+1)} -\mathbf{1}_{u>t_j}\sigma \sum_{i=1}^\infty \kappa_2^{i-1}  \frac{(u-t_j)^{i\alpha}-(\eta(u)-t_j)^{i\alpha} }{\Gamma(i \alpha+1)}.
\end{align*}
Therefore, 
\begin{align*}
  \cS_{j,k}\le &\int_0^{t_k} \frac{(t_k-u)^{\alpha-1}}{\Gamma(\alpha)} \sum_{i=1}^\infty |\kappa_2|^{i-1}  \frac{u^{i\alpha}-\eta(u)^{i\alpha}}{\Gamma(i \alpha+1)}du  \\
  &+ \int_{t_j}^{t_k} \frac{(t_k-u)^{\alpha-1}}{\Gamma(\alpha)} \sum_{i=1}^\infty |\kappa_2|^{i-1}  \frac{(u-t_j)^{i\alpha}-(\eta(u)-t_j)^{i\alpha}}{\Gamma(i \alpha+1)}du \\
  =&\int_0^{t_k} \frac{(t_k-u)^{\alpha-1}}{\Gamma(\alpha)} \sum_{i=1}^\infty |\kappa_2|^{i-1}  \frac{u^{i\alpha}-\eta(u)^{i\alpha}}{\Gamma(i \alpha+1)}du\\&+\int_0^{t_{k-j}} \frac{(t_{k-j}-u)^{\alpha-1}}{\Gamma(\alpha)} \sum_{i=1}^\infty |\kappa_2|^{i-1}  \frac{u^{i\alpha}-\eta(u)^{i\alpha}}{\Gamma(i \alpha+1)}du.
\end{align*} 
We separate the term $i=1$ from $z(u):=\sum_{i=2}^\infty |\kappa_2|^{i-1}\frac{u^{i\alpha}}{\Gamma(i \alpha+1)}$ which is a $\mathcal{C}^1([0,T])$ function and apply respectively \cite[Theorem 2.4 (b) and (a)]{DFF} to get $|\cS_{j,k}|\le \frac{T}{n}\left( C T^{2\alpha-1}+\frac{\|z'\|_{\infty}}{\alpha}T^\alpha \right)$ and thus~\eqref{bound_Sjk}.

For the last inequality, we set $I_{j,k}=\int_0^{t_j} |\D_s\xcn_{t_k}| ds$ and we obtain from~\eqref{eq_dxcn}
$$I_{j,k}\le |\kappa_2|\sum_{i=1}^{k-1}c_{i,k}I_{j,i} + \frac{\sigma T^\alpha}{\Gamma(\alpha+1)}, $$
in a similar way as above. We conclude by the same  Gr\"onwall type inequality~\cite[Lemma 3.4]{LiZe}.
\end{proof}

\begin{proof}[Proof of Lemma~\ref{lem_err_Malliavin_int}]
Let $k \in \{1,\dots,n\}$ be fixed. We set $\widetilde{I \Delta}_{\ell,k}=\int_{0}^{t_k}|\D_s X_{t_\ell}-\D_s\xcn_{t_\ell}| ds$ and get from~\eqref{rec_delta_Malliavin} and the triangle inequality
\begin{equation}\label{ID_tilde}\widetilde{I \Delta}_{\ell,k} \le |\kappa_2| \sum_{i=1}^{\ell-1}c_{i,\ell} \widetilde{I \Delta}_{i,k} +\widetilde{\mathcal{S}}_{\ell,k},  \end{equation}
with 
$$\widetilde{\mathcal{S}}_{\ell,k}= |\kappa_2| \int_0^{t_\ell} \left(\int_{0}^{t_k} |\D_s X_{u}-\D_s X_{\eta(u)}|   ds \right) \frac{(t_\ell -u)^{\alpha-1}}{\Gamma(\alpha)} du.$$
We now bound uniformly $\widetilde{\mathcal{S}}_{\ell,k}$ and we write 
\begin{align*}
|\D_s X_{u}-\D_s X_{\eta(u)}|\le &|\mathbf{1}_{s<u}\frac{(u-s)^{\alpha-1}}{\Gamma(\alpha)}-\mathbf{1}_{s<\eta(u)}\frac{(\eta(u)-s)^{\alpha-1}}{\Gamma(\alpha)}|\\
&+\sum_{i=2}^\infty |\kappa_2|^{i-1}\frac{(u-s)_+^{i\alpha-1} -(\eta(u)-s)_+^{i\alpha-1}}{\Gamma(i\alpha)}.
\end{align*}
 We then obtain 
\begin{equation}\label{bound_integ}\int_0^{t}  |\D_s X_{u}-\D_s X_{\eta(u)}|   ds  \le \frac{C}{n^\alpha},\end{equation}
since the integral of the sum can be calculated exactly and is a $O(1/n)$ using that $2\alpha-1>0$, and \cite[Lemma 4.5 with $\beta=0$]{FSW} gives the bound for the first term. 
This gives $|\widetilde{\mathcal{S}}_{\ell,k}|\le C|\kappa_2|\frac{T^{\alpha}}{\Gamma(\alpha+1)} \frac{1}{n^\alpha}$. This bound is thus uniform in $\ell$, $k$ and $n$. We finally apply the Gronwall type estimate given by~\cite[Lemma 3.4]{LiZe} to~\eqref{ID_tilde} and get the claim. 
\end{proof}

\begin{proof}[Proof of Proposition~\ref{prop_E22}]  
Let $I=\sup\{i>j: \alpha(k)=k-1 \text{ for } j<k\le i  \}$, with $I=j$ if  $\{i>j: \alpha(k)=k-1 \text{ for } j<k\le i  \}=\emptyset$ i.e. $\alpha(j+1)<j$. 

{\bf Case 1: $I=j$}. We can differentiate directly and get 
\begin{align*}
  \partial_j  H_j(r_1,\dots,r_j)= & \int_0^{r_{j}}\int_0^{r_{j+2}}\dots\int_0^{r_{m-1}}    \phi({\bf \bar{r}}) \times \\
  & \phantom{*********} (r_{\alpha(j+1)}-r_j)^{\ell_{j+1} \alpha-1}  \prod_{i>j+1} (\bar{r}_{\alpha(i)}-r_i)^{\ell_i \alpha-1}  dr_m\dots dr_{j+2} \\
  &+\int_0^{r_j}\int_0^{r_{j+1}}\dots\int_0^{r_{m-1}}  \partial_j \phi({\bf r}) \prod_{i>j} (r_{\alpha(i)}-r_i)^{\ell_i \alpha-1} dr_m\dots dr_{j+1} \\
  &+\int_0^{r_j}\int_0^{r_{j+1}}\dots\int_0^{r_{m-1}}  \phi({\bf r})   \sum_{i>j:\alpha(i)=j} (\ell_{i} \alpha-1) (r_j-r_i)^{\ell_{i} \alpha-2} \times \\
  &  \phantom{*********}\prod_{i'>j, i'\not = i} (r_{\alpha(i')}-r_{i'})^{\ell_{i'} \alpha-1}
  dr_m\dots dr_{j+1}\\
  &=:I_1+I_2+I_3,
\end{align*}
with  ${\bf \bar{r}}=(r_1,\dots,r_j,r_j,r_{j+2},\dots,r_m)$.
We use that $\phi$ is bounded and integrate from $dr_m$ to $dr_{j+2}$ to get 
$$|I_1| \le  \|\phi\|_\infty  \frac{T^{ \sum_{i=j+2}^m \alpha \ell_i}}{\prod_{i=j+2}^m \alpha \ell_i} (r_{\alpha(j+1)}-r_j)^{\ell_{j+1} \alpha-1}. $$
Note that  $\alpha(j+1)<j$ since we are in the case $I=j$. We get that 
$$\int_0^{r_{j-1}}  (r_{\alpha(j+1)}-r_j)^{\ell_{j+1} \alpha-1}   dr_j  \le  \frac{1}{\alpha \ell_j} T^{\ell_{j+1} \alpha}. $$
Thus, $I_1$ is integrable  with respect to $r_j$.

For the second integral, we use Proposition~\ref{prop_phi}~(2) that gives 
$$|\partial_j\phi ({\bf r})|\le C((r_j-r_{j+1})^{2\alpha-2}+(r_{j-1}-r_{j})^{2\alpha-2}).$$
Integrating again from $dr_m$ to $dr_{j+2}$, we obtain
$$|I_2|\le C \frac{T^{ \sum_{i=j+2}^m \alpha \ell_i}}{\prod_{i=j+2}^m \alpha \ell_i}  \int_0^{r_j}  ((r_j-r_{j+1})^{2\alpha-2}+(r_{j-1}-r_{j})^{2\alpha-2}) (r_{\alpha(j+1)}-r_{j+1})^{\ell_{j+1}\alpha -1}  dr_{j+1}.$$
If $\ell_{j+1}\ge 2$, we get 
$$|I_2| \le C \frac{T^{ \sum_{i=j+2}^m \alpha \ell_i}}{ \prod_{i=j+2}^m \alpha \ell_i} T^{\ell_{j+1}\alpha -1} \left(  \frac{r_j^{2\alpha-1}}{2\alpha-1} + (r_{j-1}-r_{j})^{2\alpha-2} \right),$$
and $\int_0^{r_{j-1}}   \left(  \frac{r_j^{2\alpha-1}}{2\alpha-1} + (r_{j-1}-r_{j})^{2\alpha-2} \right) dr_j \le C_T$. Otherwise, $\ell_{j+1}=1$ and we use $(r_{\alpha(j+1)}-r_{j+1})^{\alpha -1}\le (r_{j-1}-r_{j+1})^{\alpha -1}$ since  $\alpha(j+1)<j$. This gives 
 \begin{align*}
 |I_2|&\le C \frac{T^{ \sum_{i=j+2}^m \alpha \ell_i}}{\prod_{i=j+2}^m \alpha \ell_i}  \int_0^{r_j}  ((r_{j-1}-r_{j})^{2\alpha-2}+(r_j-r_{j+1})^{2\alpha-2})   (r_{j-1}-r_{j+1})^{\alpha -1} dr_{j+1}\\
 &\le C \frac{T^{ \sum_{i=j+2}^m \alpha \ell_i}}{\prod_{i=j+2}^m \alpha \ell_i}   \left( \frac{T^\alpha}{\alpha} (r_{j-1}-r_j)^{2\alpha-2}+ \int_0^{r_j}  (r_{j}-r_{j+1})^{2\alpha-2}   (r_{j-1}-r_{j+1})^{\alpha -1} dr_{j+1}\right).
 \end{align*}

If $\alpha>2/3$, we have $ (r_{j-1}-r_{j+1})^{\alpha -1}\le  (r_{j}-r_{j+1})^{\alpha -1}$ and 
$\int_0^{r_j}  (r_{j}-r_{j+1})^{3\alpha-3}   dr_{j+1}\le  \frac{T^{3\alpha-2}}{3\alpha-2}$. We can thus conclude as for $\ell_{j+1}\ge 2$.  Otherwise, we have $\alpha \in (1/2,2/3]$ and the  integral is equal, up to a positive multiplicative constant to
$$r_j^{2\alpha-1}r_{j-1}^{\alpha-1} {_2}F_1(1-\alpha,1;2\alpha,\frac{r_j}{r_{j-1}}), $$
using the change of variable $r_{j+1}=ur_j$, $u\in [0,1]$. By~\cite[Eq.~15.8.1]{Daalhuis}, this term is also equal, up  to a positive multiplicative constant to
$$\left( \frac{r_j}{r_{j-1}} \right)^{2\alpha-1} (r_{j-1}-r_j)^{3\alpha-2} {_2}F_1(3\alpha-1,2\alpha-1;2\alpha,\frac{r_j}{r_{j-1}}). $$
For $\alpha<2/3$, we use~\eqref{max_confluent} to get that this term is dominated by $(r_{j-1}-r_j)^{3\alpha-2}$ and is thus integrable  with respect to $r_j\in [0,r_{j-1}]$. For $\alpha=2/3$, we use~\cite[Eq.~15.4.21]{Daalhuis} to get the integrability. 

We now upper bound $I_3$.  We use that $\phi$ is bounded, so that we have to upper bound 
$$\int_0^{r_j}\int_0^{r_{j+1}}\dots\int_0^{r_{m-1}} |\ell_{i} \alpha-1| (r_j-r_i)^{\ell_{i} \alpha-2} \prod_{i'>j, i'\not = i} (r_{\alpha(i')}-r_{i'})^{\ell_{i'} \alpha-1}
   dr_m\dots dr_{j+1}.$$
When $\ell_i\ge 2$, we can upper bound this integral in the same way as $I_1$. We now consider $\ell_i= 1$ and  integrate from $dr_m$ to $dr_{i+1}$ to get the upper bound
 \begin{align*}
 &  \frac{T^{ \sum_{k=i+1}^m \alpha \ell_k}}{\prod_{k=i+1}^m \alpha \ell_k}   \int_0^{r_j}\dots\int_0^{r_{i-2}}\left( \int_0^{r_{i-1}} (1-\alpha) (r_j-r_i)^{\alpha-2}  dr_i \right) \prod_{i>i'>j} (r_{\alpha(i')}-r_{i'})^{\ell_{i'}\alpha-1} dr_{i-1} \dots dr_{j+1}\\
&\le  \frac{T^{ \sum_{k=i+1}^m \alpha \ell_k}}{\prod_{k=i+1}^m \alpha \ell_k}   \int_0^{r_j}\int_0^{r_{j+1}}\dots\int_0^{r_{i-2}} (r_j-r_{i-1})^{\alpha-1} \prod_{i>i'>j} (r_{\alpha(i')}-r_{i'})^{\ell_{i'}\alpha-1} dr_{i-1} \dots dr_{j+1}.
 \end{align*} 
Using that $2xy\le x^2+y^2$,  we get $$\int_0^{r_{i-2}} (r_j-r_{i-1})^{\alpha-1} (r_{\alpha(i-1)}-r_{i-1})^{\ell_{i-1}\alpha-1} dr_{i-1}\le \frac 12 \left(\frac{T^{2\alpha-1}}{2\alpha-1}+ \frac{T^{2 \ell_{i-1}\alpha-1}}{2\alpha\ell_{i-1}-1} \right)$$
and then integrating from $dr_{i-2}$ to $dr_j$, we get the result. 

{\bf Case 2: $I>j$}. We make a change of variable in the integral to differentiate with respect to $r_j$. Namely, we set $(r_{j+1},r_{j+2},\dots,r_I)=(u_{j+1}r_j,u_{j+2}u_{j+1}r_j,\dots,\prod_{k=j+1}^{I}u_k r_j )$ with $u_{j+1},\dots,u_I \in (0,1)$. We first observe that (using the definition of $I$)
\begin{align*}
  \prod_{i>j} (r_{\alpha(i)}-r_i)^{\ell_i \alpha-1}&=\prod_{I\ge i>j} (r_{i-1}-r_i)^{\ell_i \alpha-1}\prod_{i>I} (r_{\alpha(i)}-r_i)^{\ell_i \alpha-1} \\
&= r_j^{\sum_{i=j+1}^I (\ell_i \alpha-1) } \prod_{I\ge i>j} u_i^{\sum_{k=i+1}^I (\ell_k \alpha-1)} (1-u_i)^{\ell_i \alpha-1}       \prod_{i>I} (r_{\alpha(i)}-r_i)^{\ell_i \alpha-1},
\end{align*}
and noting that the Jacobian is equal to $r_j^{I-j}u_{j+1}^{I-j-1}\dots u_{I-2}^2 u_{I-1}$, we obtain
\begin{align*}
  H_j(r_1,\dots,r_j)&=r_j^{\sum_{i=j+1}^I \ell_i \alpha } \int_0^{1}\dots \int_0^1  \int_0^{ \prod_{k=j+1}^{I}u_k r_j  } \int_0^{r_{I+1}} \dots\int_0^{r_{m-1}}  \phi({\bf ru}) \times \\&  \prod_{I\ge i>j} u_i^{\sum_{k=i+1}^I \ell_k \alpha} (1-u_i)^{\ell_i \alpha-1}       \prod_{i>I} ({\bf ru}_{\alpha(i)}-r_i)^{\ell_i \alpha-1} dr_m\dots dr_{I+1}du_I\dots  du_{j+1}    
\end{align*}
with ${\bf ru} = (r_1,\dots,r_j, u_{j+1}r_j, \dots,\prod_{k=j+1}^{I}u_k r_j,r_{I+1},\dots,r_m)$. Note that all the integrals between $0$ and $1$ are well defined and finite since they have a density of beta type. 
We then set \\ $\widetilde{\bf ru } = (r_1,\dots,r_j, u_{j+1}r_j, \dots,\prod_{k=j+1}^{I}u_k r_j,\prod_{k=j+1}^{I}u_k r_j,r_{I+2},\dots,r_m)$ and  get 
\begin{align*}
&\partial_j  H_j(r_1,\dots,r_j)\\
&=\left(\sum_{i=j+1}^I \ell_i \alpha \right) r_j^{\sum_{i=j+1}^I \ell_i \alpha -1} \int_0^{1}\dots \int_0^1  \int_0^{ \prod_{k=j+1}^{I}u_k r_j  }  \int_0^{r_{I+1}}\dots\int_0^{r_{m-1}}  \phi({\bf ru}) \times \\&  \prod_{I\ge i>j} u_i^{\sum_{k=i+1}^I \ell_k \alpha} (1-u_i)^{\ell_i \alpha-1}       \prod_{i>I} ({\bf ru}_{\alpha(i)}-r_i)^{\ell_i \alpha-1}  dr_m\dots dr_{I+1}du_I\dots  du_{j+1}\\
&+ r_j^{\sum_{i=j+1}^I \ell_i \alpha } \int_0^{1}\dots \int_0^1  \int_0^{ \prod_{k=j+1}^{I}u_k r_j  } \int_0^{r_{I+2}} \dots\int_0^{r_{m-1}}  \phi(\widetilde{\bf ru}) \times \\&  \prod_{I\ge i>j} u_i^{1+\sum_{k=i+1}^I \ell_k \alpha} (1-u_i)^{\ell_i \alpha-1}       \prod_{i>I} (\widetilde{\bf ru}_{\alpha(i)}-r_i)^{\ell_i \alpha-1} dr_m \dots dr_{I+2} du_I \dots du_{j+1}  
\\
&+r_j^{\sum_{i=j+1}^I \ell_i \alpha } \int_0^{1}\dots \int_0^1  \int_0^{ \prod_{k=j+1}^{I}u_k r_j  }  \int_0^{r_{I+1}}\dots\int_0^{r_{m-1}}  \left[ \sum_{p=j}^I  \left(\prod_{k=j+1}^{p} u_k\right) (\partial_p \phi)({\bf ru}) \right] \times \\&  \prod_{I\ge i>j} u_i^{\sum_{k=i+1}^I \ell_k \alpha} (1-u_i)^{\ell_i \alpha-1}       \prod_{i>I} ({\bf ru}_{\alpha(i)}-r_i)^{\ell_i \alpha-1}  dr_m\dots dr_{I+1}du_I\dots  du_{j+1} \\
&+ r_j^{\sum_{i=j+1}^I \ell_i \alpha } \int_0^{1}\dots \int_0^1   \int_0^{ \prod_{k=j+1}^{I}u_k r_j  }  \int_0^{r_{I+1}}\dots\int_0^{r_{m-1}}  \phi({\bf ru}) \times \prod_{I\ge i>j} u_i^{\sum_{k=i+1}^I \ell_k \alpha} (1-u_i)^{\ell_i \alpha-1}   
 \\&  \times  \sum_{i>I: j\le \alpha(i)\le I }  (\ell_i \alpha-1) \frac{{\bf ru}_{\alpha(i)}}{r_j} ({\bf ru}_{\alpha(i)}-r_i)^{-1}\prod_{i'>I} ({\bf ru}_{\alpha(i')}-r_{i'})^{\ell_{i'} \alpha-1}  dr_m\dots dr_{I+1}du_I\dots  du_{j+1} \\
 &=: J_1+J'_1+ J_2+ J_3.
\end{align*}
We can then upper bound $J_1+J'_1$ and $J_3$ with similar arguments as, respectively, $I_1$ and $I_3$. We now focus on $J_2$ which is similar to $I_2$, but we prefer to give more details since its analysis is more involved. We use Proposition~\ref{prop_phi} to get
$$ \left| \sum_{p=j}^I  \left(\prod_{k=j+1}^{p} u_k\right) (\partial_p \phi)({\bf ru}) \right|\le C \left( (r_{j-1}-r_j)^{2\alpha-2} +r_j^{2\alpha-2} {\bf A}(u) + \left(\prod_{k=j+1}^{I}u_k r_j -r_{I+1}\right)^{2\alpha-2} \right),$$
with ${\bf A}(u)=(1-u_{j+1})^{2\alpha-2}+\dots +\left(\prod_{k=j+1}^{I}u_k\right)^{2\alpha-2}(1-u_{I})^{2\alpha-2}$.
As for $I_2$, we first integrate from $dr_m$ to $dr_{I+2}$ and get 
\begin{align*}
  |J_2|&\le C \frac{T^{ \sum_{k=I+2}^m \alpha \ell_k}}{\prod_{k=I+2}^m \alpha \ell_k}  r_j^{\sum_{i=j+1}^I \ell_i \alpha } \int_0^{1}\dots \int_0^1  \int_0^{ \prod_{k=j+1}^{I}u_k r_j  }    \prod_{I\ge i>j} u_i^{\sum_{k=i+1}^I \ell_k \alpha} (1-u_i)^{\ell_i \alpha-1}  \times \\&   \left( (r_{j-1}-r_j)^{2\alpha-2} +r_j^{2\alpha-2} {\bf A}(u) + \left(\prod_{k=j+1}^{I}u_k r_j -r_{I+1}\right)^{2\alpha-2} \right)  ({\bf ru}_{\alpha(I+1)}-r_{I+1})^{\ell_{I+1} \alpha-1}   \\
  &\phantom{*****************************************} dr_{I+1}du_I  \dots du_{j+1}.
\end{align*}
If $\ell_{I+1}\ge 2$, we bound $ ({\bf ru}_{\alpha(I+1)}-r_{I+1})^{\ell_{I+1} \alpha-1}\le T^{\ell_{I+1} \alpha-1}$ and the integral with respect to $r_I+1$ is upper bounded by 
 $C((r_{j-1}-r_j)^{2\alpha-2} +r_j^{2\alpha-2}+  \left(\prod_{k=j+1}^{I}u_k r_j\right)^{2\alpha-1})$, which is integrable with respect to $r_j \in  [0,r_{j-1}]$. In the case $\ell_{I+1}= 1$, since the terms $(r_{j-1}-r_j)^{2\alpha-2}$ and $r_j^{2\alpha-2} {\bf A}(u)$ do not depend on $r_{I+1}$,  their corresponding integrals are easily upper bounded by $C((r_{j-1}-r_j)^{2\alpha-2} +r_j^{2\alpha-2})$, which is integrable with respect to $r_j \in  [0,r_{j-1}]$. We thus focus on 
$$\int_0^{r_I}  (r_I -r_{I+1})^{2\alpha-2} ({\bf ru}_{\alpha(I+1)}-r_{I+1})^{ \alpha-1} dr_{I+1}.$$
If $\alpha>2/3$, this integral is upper bounded by $\frac{T^{3\alpha-2}}{3\alpha-2}$, and we conclude easily. Otherwise, by a change of variable this integral is equal, up to multiplicative constant, to
$$ \left(\frac{\prod_{k=j+1}^{I}u_k r_j}{{\bf ru}_{\alpha(I+1)}}\right)^{2\alpha-1}  \left( {\bf ru}_{\alpha(I+1)} - \prod_{k=j+1}^{I}u_k r_j \right)^{3\alpha-2} {_2}F_1\left(3\alpha-1,2\alpha-1;2\alpha,\frac{\prod_{k=j+1}^{I}u_k r_j}{{\bf ru}_{\alpha(I+1)}}\right).$$ 
 We conclude as for $J_2$.
\end{proof}

\bibliographystyle{abbrv}
\bibliography{Rough_CIR}
\end{document}